\newtheorem{theorem}{Theorem}
\theoremstyle{plain}
\newtheorem{corollary}{Corollary}
\newtheorem{definition}{Definition}
\newtheorem{lemma}{Lemma}
\newtheorem{proposition}{Proposition}
\newtheorem{remark}{Remark}
\numberwithin{equation}{section}
\begin{document}
\title[The Spherical $\pi _{\alpha ,S^{n-1}}-$ Operator]{The Spherical $\pi
_{\alpha ,S^{n-1}}$-Operator }
\author{Dejenie A. Lakew}
\address{Virginia Union University\\
Department of Mathematics \\
Richmond, VA 23220}
\email{dalakew@vuu.edu}
\urladdr{http://www.vuu.edu}
\date{November 19, 2008}
\subjclass[2000]{ 30G35, 35A20, 58J15}
\keywords{Spherical Dirac Operator, Pi-Operaor, Beltrami equation}
\thanks{This paper is in final form and no version of it will be submitted
for publication elsewhere.}

\begin{abstract}
In this article we define the spherical $\pi _{\alpha ,S^{n-1}}$\ operator
over domains in the $\left( n-1\right) D-$ unit sphere $S^{n-1}$ of $%
\mathbb{R}
^{n}$ \ and develop new and analogous results. We introduce a spherical
Dirac operator $\Gamma _{\alpha }:=\Gamma _{\omega }+\alpha $, where $\alpha
\in 
\mathbb{C}
$ and $\Gamma _{\omega }=-\omega \wedge D_{\omega }$ , the anti-symmetric
Grassmanian product of $\omega $ with $D_{\omega
}=\dsum\limits_{i=1}^{n}e_{i}\frac{\partial }{\partial \omega _{i}}$. We use
a Gegenbauer polynomial $\Psi _{\alpha }^{n}(\omega -\upsilon )$ as a Cauchy
kernel for $\Gamma _{\alpha }$.
\end{abstract}

\maketitle

\section{\textbf{Introduction}}

The \ $\pi -$ operator is one of the tools used to study smoothness of
functions over Sobolev spaces and to solve some first order partial
differential equations such as the Beltrami equation. In Euclidean spaces,
we see that the singularity of its kernel is of order one more than the
dimension of the space $%
\mathbb{R}
^{n}$ and hence it is a hyper singular integral operator.

\ 

In the class of singular integral operators, the $\pi $-operator is the
least studied integral operator than the weakly singular and singular
operators which are studied extensively.

\ \ \ 

Recently in \cite{lakryan} Dejenie A. Lakew and John Ryan also study the $%
\pi -$ operator in a generalized setting over Domain Manifolds in $%
\mathbb{C}
^{n+1}$ and produced some properties and its integral representation as well.

\ \ 

In this paper we study the $\pi _{\alpha ,S^{n-1}}$-operator over domains in 
$S^{n-1}$, the $\left( n-1\right) D$ -unit sphere in $%
\mathbb{R}
^{n}$. The differential operator we are considering is the spherical Dirac
operator 
\begin{equation*}
\Gamma _{\alpha }:=\Gamma _{\omega }+\alpha
\end{equation*}%
where 
\begin{equation*}
\Gamma _{\omega }=-\omega \wedge D_{\omega }
\end{equation*}%
and $\alpha $ is some complex number. Here%
\begin{equation*}
D_{\omega }=\dsum\limits_{i=1}^{n}e_{i}\frac{\partial }{\partial \omega _{i}}
\end{equation*}%
is the usual Dirac operator in $%
\mathbb{R}
^{n}$ and $\wedge $ is the Grassman( or wedge) product. The function which
is used as a Cauchy kernel or fundamental solution to this spherical Dirac
operator is a Gegenbauer polynomial.

\ \ 

\section{\textbf{Preliminaries: Algebraic and Analytic}}

Let $e_{1}$,$e_{2},e_{3,}$...,$e_{n\text{ }}$ be orthonormal unit vectors
that generate $%
\mathbb{R}
^{n}$. Then the anti-commutative algebra of dimension $2^{n}$ is the one
defined in terms of a negative inner product : 
\begin{equation*}
\langle x,y\rangle =-\dsum\limits_{i=1}^{n}x_{i}y_{i}.
\end{equation*}
Thus $\parallel x\parallel =-x^{2}$ . Under this structure we have : 
\begin{equation*}
e_{ij}+e_{ji}=-2\delta ij
\end{equation*}%
where $\delta _{ij}$ is the Kronecker delta. This algebra is called a
Clifford algebra and is denoted by $Cl_{n}$. Every element in this algebra
is represented by%
\begin{equation*}
x=\dsum\limits_{A}e_{A}x_{A}
\end{equation*}
where $e_{A}=e_{i_{1}i_{2}...i_{r}}$ for $A=\{i_{1}<i_{2}<...<i_{n}\}%
\subseteq \left\{ 1,2,3,...,n\right\} $ and $x_{A}\in 
\mathbb{R}
$.

Thus by identifying \ the element $x=\left( x_{1},x_{2},...,x_{n}\right) $
of $%
\mathbb{R}
^{n}$ with $\dsum\limits_{i=1}^{n}e_{i}x_{i}$ $\in Cl_{n}$, we imbed the
Euclidean space $\ $%
\begin{equation*}
\mathbb{R}
^{n}\hookrightarrow Cl_{n}.
\end{equation*}
For $x,y\in Cl_{n}$ , their Clifford product $xy$ is written as a sum of
their inner product and their anti-symmetric Grassmanian product, as :%
\begin{equation*}
xy=x.y+x\wedge y.
\end{equation*}

In particular, for $x,y\in 
\mathbb{R}
^{n},$ we have:

\begin{eqnarray*}
xy &=&\left( \dsum\limits_{i=1}^{n}e_{i}x_{i}\right) \left(
\dsum\limits_{j=1}^{n}e_{j}x_{j}\right) \\
&=&\dsum\limits_{i,j=1}^{n}e_{ij}x_{i}y_{j}
\end{eqnarray*}

\begin{eqnarray*}
&=&\dsum\limits_{i=j=1}^{n}e_{ii}x_{i}y_{i}+\dsum\limits_{i\neq
j}^{n}e_{ij}x_{i}y_{j} \\
&=&\underset{x.y}{\underbrace{-\dsum\limits_{i=1}^{n}x_{i}y_{i}}}-\underset{%
x\wedge y}{\underbrace{\dsum\limits_{i<j}^{n}e_{ij}\left(
x_{i}y_{j}-x_{j}y_{i}\right) }}
\end{eqnarray*}

\ \ \ \ \ \ \ \ \ \ \ \ \ \ \ \ \ \ \ \ \ \ \ \ \ \ \ \ \ \ \ \ \ \ \ \ \ \
\ \ \ \ \ \ \ \ \ \ \ \ \ \ \ \ \ \ \ \ \ \ \ \ \ \ \ \ \ \ \ \ \ \ \ \ \ \
\ \ \ \ \ \ \ \ \ \ \ \ \ \ \ \ \ \ \ \ \ \ \ \ \ \ \ \ \ \ \ \ \ \ \ \ \ \
\ \ \ \ \ \ \ \ \ \ \ \ \ \ \ \ \ \ \ \ \ \ \ \ \ \ 

Every non zero element of $%
\mathbb{R}
^{n}$ is invertible : for $x\in 
\mathbb{R}
^{n\ast }$, where $\ast $ indicates the tossing out of zero, \ 
\begin{equation*}
x^{-1}=\frac{-x}{\parallel x\parallel ^{2}}\ .
\end{equation*}

Also for every element 
\begin{equation*}
x=\dsum\limits_{A\subseteq \left\{ 1<2<...<n\right\} }e_{A}x_{A}
\end{equation*}%
of $Cl_{n}\left( 
\mathbb{R}
\right) $, we define the Clifford conjugate $\overline{x}$ of $x$ by 
\begin{equation*}
\overline{x}:=\dsum\limits_{A\subseteq \left\{ 1<2<...<n\right\} }\overline{e%
}_{A}x_{A}
\end{equation*}%
where, for 
\begin{equation*}
e_{A}=e_{i_{1}}...e_{i_{k}}
\end{equation*}%
\begin{equation*}
\overline{e}_{A}=\overline{e}_{i_{k}}...\overline{e}_{i_{1}},\overline{e}%
_{j}=-e_{j},j=1,...,n,\overline{e}_{0}=e_{0}
\end{equation*}%
and therefore, we have a Clifford norm given by 
\begin{equation*}
\parallel x\parallel _{Cl}=\left[ x\overline{x}\right] _{0}.
\end{equation*}%
$\ \ \ $\ \ \ \ \ \ \ \ \ \ \ \ \ \ \ \ \ 

Thus, the unit sphere $S^{n-1}$ in $%
\mathbb{R}
^{n}$ is described as : 
\begin{equation*}
S^{n-1}=\left\{ x\in 
\mathbb{R}
^{n}:\parallel x\parallel _{Cl_{n}}=1\right\} .
\end{equation*}

Consider a $c^{1}-$ domain $\Omega $ $\subseteq S^{n-1}$ , a function $%
f:\Omega \rightarrow Cl_{n}$ has a representation given by : 
\begin{equation*}
f(x)=\dsum\limits_{A\subseteq \left\{ 1<2<...<n\right\} }e_{A}f_{A}(x)
\end{equation*}%
where $f_{A}:\Omega \rightarrow 
\mathbb{R}
$. In this regard, a Clifford valued function over a domain is said to be $%
C^{k}$ if each component real valued function $f_{A}$ is $C^{k}$, and we say
such a function belongs to a Sobolev space $W^{p,k}\left( \Omega
,Cl_{n}\right) $ if each component function $f_{A}\in W^{p,k}\left( \Omega
,Cl_{n}\right) $.

\ \ 

Let $f\in c^{1}\left( \Omega ,Cl_{n}\right) \cap c\left( \overline{\Omega }%
,Cl_{n}\right) $ , $\alpha \in 
\mathbb{C}
$ and $\omega \in S^{n-1}$ . Then

\begin{definition}
We define a spherical Dirac operator by $\ $%
\begin{equation*}
\Gamma _{\alpha }:=\Gamma _{\omega }+\alpha
\end{equation*}%
where 
\begin{equation*}
\Gamma _{\omega }=-\omega \wedge D_{\omega }
\end{equation*}

\begin{equation*}
=-\dsum\limits_{i<j}^{n}e_{ij}\left( \omega _{i}\frac{\partial }{\partial
\omega _{j}}-\omega _{j}\frac{\partial }{\partial \omega _{i}}\right)
\end{equation*}

and $D_{\omega }=\dsum\limits_{i=1}^{n}e_{i}\frac{\partial }{\omega _{i}}$
is the usual Dirac operator.
\end{definition}

\begin{definition}
A function $f\in C^{1}\left( \Omega \rightarrow Cl_{n}\right) $ is called a
spherical left monogenic function of order $\alpha $ if 
\begin{equation*}
\Gamma _{\alpha }f\left( x\right) =0,\forall x\in \Omega
\end{equation*}%
and is a spherical right monogenic function of order \ $\alpha $ if \ 
\begin{equation*}
f(x)\Gamma _{\alpha }=0,\forall x\in \Omega .
\end{equation*}
\end{definition}

\ \ \ \ \ \ 

Consider the generalized Gegenbauer function of degree $\alpha $ and of
order $\lambda :$ 
\begin{equation*}
C_{\alpha }^{\lambda }(z)=\frac{\Gamma (\alpha +2\lambda )}{\Gamma (\alpha
+1)\Gamma (2\lambda )}F\left( -\alpha ,\alpha +2\lambda ;\lambda +\frac{1}{2}%
;\frac{1}{2}\left( 1-z\right) \right)
\end{equation*}

where $F\left( a,b;c;d\right) $ is a hypergeometric function given by

$\ $%
\begin{equation*}
F\left( a,b;c;d\right) :=\dsum\limits_{k=1}^{\infty }\frac{\left( a\right)
_{k}\left( b\right) _{k}d^{k}}{\left( c\right) _{k}k!}
\end{equation*}%
for $\mid d\mid \leq 1$ with 
\begin{equation*}
\left( x\right) _{k}:=\frac{\Gamma \left( x+k\right) }{\Gamma \left(
x\right) }
\end{equation*}
which is simplified to $:\dprod\limits_{i=1}^{k}\left( x+i-1\right) $

\begin{proposition}
The Gegenbauer function with degree $\alpha $ and order $\lambda $ can be
re-written as:

\begin{equation*}
C_{\alpha }^{\lambda }(z)=\frac{\Gamma \left( \alpha +2\lambda \right) }{%
\Gamma \left( \alpha +1\right) \Gamma \left( 2\lambda \right) }%
\dsum\limits_{k=1}^{\infty }\left( \dprod\limits_{i=1}^{k}\left( \frac{%
-\alpha \left( \alpha +2\lambda \right) +\left( i-1\right) \left( 2\lambda
\right) +\left( i-1\right) ^{2}}{\lambda -\frac{1}{2}+i}\right) \right) 
\frac{\left( 1-z\right) ^{k}}{k!2^{k}}
\end{equation*}
\end{proposition}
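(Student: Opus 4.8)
The plan is to start from the hypergeometric series defining $C_\alpha^\lambda(z)$ and rewrite the product of Pochhammer symbols $(a)_k(b)_k/(c)_k$ as a single product over $i=1,\dots,k$. Specifically, I would write
\begin{equation*}
\frac{(-\alpha)_k(\alpha+2\lambda)_k}{(\lambda+\tfrac12)_k}
=\prod_{i=1}^{k}\frac{(-\alpha+i-1)(\alpha+2\lambda+i-1)}{\lambda+\tfrac12+i-1},
\end{equation*}
using the simplified form $(x)_k=\prod_{i=1}^k(x+i-1)$ given in the excerpt. The idea is that the $k$-fold products in the numerator and denominator can be combined term-by-term, so that the $i$-th factor of the whole expression is $\dfrac{(-\alpha+i-1)(\alpha+2\lambda+i-1)}{\lambda+\tfrac12+i-1}$. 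Then I would substitute $d=\tfrac12(1-z)$, so $d^k=(1-z)^k/2^k$, pull the constant $\dfrac{\Gamma(\alpha+2\lambda)}{\Gamma(\alpha+1)\Gamma(2\lambda)}$ out front, and keep the $1/k!$ from the hypergeometric series.

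The only real content beyond bookkeeping is the algebraic identity
\begin{equation*}
(-\alpha+i-1)(\alpha+2\lambda+i-1)=-\alpha(\alpha+2\lambda)+(i-1)(2\lambda)+(i-1)^2,
\end{equation*}
which one checks by expanding: setting $m=i-1$, the left side is $(m-\alpha)(m+\alpha+2\lambda)=m^2+m(2\lambda)+m\alpha-\alpha m-\alpha(\alpha+2\lambda)=m^2+2\lambda m-\alpha(\alpha+2\lambda)$, matching the right side. Likewise $\lambda+\tfrac12+i-1=\lambda-\tfrac12+i$ gives the stated denominator. Assembling these substitutions into the product and combining with $(1-z)^k/(k!\,2^k)$ yields exactly the claimed formula.

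The main obstacle, such as it is, is purely cosmetic: one must be careful that the series in the excerpt is written starting from $k=1$ (rather than the conventional $k=0$), so that the $k=0$ constant term is absorbed, and that the index shift $i\mapsto i-1$ in the factors is applied consistently in numerator and denominator. I would present the proof as (i) restate the defining series, (ii) record the Pochhammer-to-product conversion, (iii) verify the quadratic identity for the $i$-th numerator factor, (iv) substitute $\tfrac12(1-z)$ for $d$, and (v) collect terms to obtain the displayed expression. No deep tool is needed — everything follows from the definitions of $F$ and $(x)_k$ already given.
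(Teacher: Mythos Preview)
Your proposal is correct and follows essentially the same route as the paper: both rewrite $\dfrac{(a)_k(b)_k}{(c)_k}$ as the product $\prod_{i=1}^{k}\dfrac{ab+(i-1)(a+b)+(i-1)^2}{c+i-1}$ and then specialize to $a=-\alpha$, $b=\alpha+2\lambda$, $c=\lambda+\tfrac12$, $d=\tfrac12(1-z)$. Your write-up is in fact more explicit than the paper's, since you verify the quadratic identity for the $i$-th numerator factor and flag the indexing convention (series starting at $k=1$), whereas the paper simply asserts the simplified form of $F(a,b;c;d)$ and substitutes.
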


\begin{proof}
The proof follows from the simplification of the right side of the
hypergeometric function 
\begin{equation*}
F\left( a,b;c;d\right) :=\dsum\limits_{k=1}^{\infty }\frac{\left( a\right)
_{k}\left( b\right) _{k}d^{k}}{\left( c\right) _{k}k!}
\end{equation*}
to the sum

\begin{equation*}
\dsum\limits_{k=1}^{\infty }\left( \dprod\limits_{i=1}^{k}\left( \frac{%
ab+\left( i-1\right) \left( a+b\right) +\left( i-1\right) ^{2}}{c+i-1}%
\right) \right) \frac{d^{k}}{k!}
\end{equation*}
and therefore the hypergeometric function with particular inputs $F\left(
-\alpha ,\alpha +2\lambda ;\lambda +\frac{1}{2};\frac{1}{2}\left( 1-z\right)
\right) $ is simplified to

\begin{equation*}
\dsum\limits_{k=1}^{\infty }\left( \dprod\limits_{i=1}^{k}\left( \frac{%
-\alpha \left( \alpha +2\lambda \right) +\left( i-1\right) \left( 2\lambda
\right) +\left( i-1\right) ^{2}}{\lambda -\frac{1}{2}+i}\right) \right) 
\frac{\left( 1-z\right) ^{k}}{k!2^{k}}.
\end{equation*}

\ \ 

Hence the\ Gegenbauer function is given by 
\begin{equation*}
C_{\alpha }^{\lambda }(z)=\frac{\Gamma \left( \alpha +2\lambda \right) }{%
\Gamma \left( \alpha +1\right) \Gamma \left( 2\lambda \right) }%
\dsum\limits_{k=1}^{\infty }\left( \dprod\limits_{i=1}^{k}\left( \frac{%
-\alpha \left( \alpha +2\lambda \right) +\left( i-1\right) \left( 2\lambda
\right) +\left( i-1\right) ^{2}}{\lambda -\frac{1}{2}+i}\right) \right) 
\frac{\left( 1-z\right) ^{k}}{k!2^{k}}.
\end{equation*}
\end{proof}

\bigskip\ \ \ \ 

From the above Gegenbauer function, a function is constructed to be a
fundamental solution (or Cauchy kernel) for the spherical Dirac operator $%
\Gamma _{\alpha }:=\Gamma _{\omega }+\alpha $ as

\begin{equation*}
\Psi _{\alpha }^{n}\left( \omega ,\upsilon \right) =\frac{\pi }{\sigma
_{n-1}\sin \pi \alpha }\left( C_{\alpha }^{\frac{n+1}{2}}\left( \omega
.\upsilon \right) -\omega \upsilon C_{\alpha -1}^{\frac{n+1}{2}}\left(
\omega .\upsilon \right) \right)
\end{equation*}%
See \cite{lanck},\cite{hlry} for details.

\ 

\begin{proposition}
The fundamental solution $\Psi _{\alpha }^{n}\left( \omega ,\upsilon \right) 
$ to the spherical Dirac operator can be written as:

$\Psi _{\alpha }^{n}\left( \omega ,\upsilon \right) =$

$\frac{\pi }{\sigma _{n-1}\sin \pi \alpha }\left( \frac{\Gamma \left( \alpha
+n+1\right) }{\Gamma \left( \alpha +1\right) \Gamma \left( n+1\right) }%
\dsum\limits_{k=1}^{\infty }\left[ 
\begin{array}{c}
\left( \dprod\limits_{i=1}^{k}\left( \frac{-\alpha \left( \alpha +n+1\right)
+\left( i-1\right) \left( n+1\right) +\left( i-1\right) ^{2}}{\frac{n}{2}+i}%
\right) \right) - \\ 
\omega v\frac{\Gamma \left( \alpha +n\right) }{\Gamma \left( \alpha \right)
\Gamma \left( n+1\right) }\dsum\limits_{k=1}^{\infty }\left(
\dprod\limits_{i=1}^{k}\left( \frac{\left( -\alpha +1\right) \left( \alpha
+n\right) +\left( i-1\right) \left( n+1\right) +\left( i-1\right) ^{2}}{%
\frac{n}{2}+i}\right) \right)%
\end{array}%
\right] \frac{\left( 1-\omega .v\right) ^{k}}{k!2^{k}}\right) $.
\end{proposition}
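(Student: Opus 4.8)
The plan is to obtain the stated formula purely by substitution: take the closed form of the fundamental solution
\begin{equation*}
\Psi _{\alpha }^{n}\left( \omega ,\upsilon \right) =\frac{\pi }{\sigma _{n-1}\sin \pi \alpha }\left( C_{\alpha }^{\frac{n+1}{2}}\left( \omega \cdot \upsilon \right) -\omega \upsilon \,C_{\alpha -1}^{\frac{n+1}{2}}\left( \omega \cdot \upsilon \right) \right)
\end{equation*}
and replace each Gegenbauer function by the series representation furnished by the previous Proposition, specialized to order $\lambda =\frac{n+1}{2}$. With this choice one has $2\lambda =n+1$, $\lambda +\frac12 =\frac{n}{2}+1$, so that the denominator $\lambda -\frac12 +i$ occurring in the product collapses to $\frac{n}{2}+i$; this is the denominator that appears in the target expression.

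First I would apply the previous Proposition with degree $\alpha$ and order $\frac{n+1}{2}$. Since $\alpha +2\lambda =\alpha +n+1$, the prefactor is $\frac{\Gamma (\alpha +n+1)}{\Gamma (\alpha +1)\Gamma (n+1)}$ and the numerator of the $i$-th factor of the product is $-\alpha (\alpha +n+1)+(i-1)(n+1)+(i-1)^{2}$ (this is $ab+(i-1)(a+b)+(i-1)^{2}$ with $a=-\alpha$, $b=\alpha +n+1$, noting $a+b=n+1$). Next I would apply the same Proposition with degree $\alpha -1$ and the same order: now $a=-(\alpha -1)=1-\alpha$ and $b=(\alpha -1)+2\lambda =\alpha +n$, so the numerator becomes $(1-\alpha )(\alpha +n)+(i-1)(n+1)+(i-1)^{2}$, while the prefactor is $\frac{\Gamma (\alpha +n)}{\Gamma (\alpha )\Gamma (n+1)}$ because $\Gamma \big((\alpha -1)+1\big)=\Gamma (\alpha )$ and $\Gamma \big((\alpha -1)+2\lambda \big)=\Gamma (\alpha +n)$.

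Substituting $z=\omega \cdot \upsilon$ in both series, inserting them into the displayed formula for $\Psi _{\alpha }^{n}$, observing that the Clifford number $\omega \upsilon $ is constant with respect to the summation index and hence factors cleanly against the second series, and pulling the common term $\frac{(1-\omega \cdot \upsilon )^{k}}{k!\,2^{k}}$ out of each summand, yields exactly the claimed expression. No new convergence question arises, since the constraint $|1-\omega \cdot \upsilon |\le 2$ is already built into the hypergeometric/Gegenbauer series used in the previous Proposition. There is no genuine analytic obstacle here; the only point requiring care is the bookkeeping of the Gamma-function shifts when passing from degree $\alpha$ to degree $\alpha -1$ — in particular writing $\Gamma (\alpha )$ rather than $\Gamma (\alpha +1)$ in the second prefactor — and keeping the two series separate, since they share the factor $\frac{(1-\omega \cdot \upsilon )^{k}}{k!\,2^{k}}$ but carry different coefficient products, so the result is naturally a difference of two sums rather than a single sum.
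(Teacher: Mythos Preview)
Your proposal is correct and follows exactly the approach the paper has in mind: the paper does not supply a separate proof for this proposition, since it is meant to follow by direct substitution of the series representation of $C_{\alpha}^{\lambda}$ from the preceding proposition (with $\lambda=\frac{n+1}{2}$, once for degree $\alpha$ and once for degree $\alpha-1$) into the defining formula $\Psi _{\alpha }^{n}(\omega ,\upsilon )=\frac{\pi }{\sigma _{n-1}\sin \pi \alpha }\big( C_{\alpha }^{\frac{n+1}{2}}(\omega \cdot \upsilon )-\omega \upsilon \,C_{\alpha -1}^{\frac{n+1}{2}}(\omega \cdot \upsilon )\big)$. Your bookkeeping of the Gamma shifts and of the product numerators is accurate, and your closing remark that the displayed expression is really a difference of two series sharing the factor $\frac{(1-\omega\cdot\upsilon)^{k}}{k!\,2^{k}}$ is the right reading of the paper's somewhat compressed typesetting.
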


\bigskip\ \ \ \ \ 

Then using this as a Cauchy kernel , we define the following integral
transforms over function spaces which are $C^{1,\alpha },$ for $0\leq \alpha
<1$ or over Sobolev spaces $W^{p,k}\left( \Omega ,Cl_{n}\right) $ for $%
1<p<\infty $.

\bigskip\ \ \ \ \ \ 

Let $\Omega $ be a bounded smooth domain in $S^{n-1}$ and $f\in C^{1}\left(
\Omega ,Cl_{n}\right) $ , then we define, the Teodorescu transform as: 
\begin{equation*}
T_{\Omega }\left( f\right) (\upsilon )=\int_{\Omega }\Psi _{\alpha
}^{n}\left( \omega ,\upsilon \right) f(\omega )d\omega ,\text{ for }\upsilon
\in S^{n-1}
\end{equation*}%
which is the right inverse of the spherical Dirac operator $\Gamma _{\alpha
} $.

\bigskip\ \ \ \ 

Also we have a non-singular boundary integral operator given by \ 
\begin{equation*}
\digamma _{\partial \Omega }f\left( \upsilon \right) =\int_{\partial \Omega
}\Psi _{\alpha }^{n}\left( \omega ,\upsilon \right) n\left( \upsilon \right)
f(\upsilon )d\partial \Omega _{\omega },\text{for }\upsilon \notin \partial
\Omega
\end{equation*}%
An other boundary integral is the singular integral given by 
\begin{equation*}
\widetilde{F}_{\partial \Omega }f(v)=2\int_{\partial \Omega }\Psi _{\alpha
}^{n}\left( \omega ,\upsilon \right) n\left( \upsilon \right) f(v)d\partial
\Omega _{\omega },\text{ for }v\in \partial \Omega
\end{equation*}%
The last integral is seen in terms of the Cauchy principal value and is good
for computing non tangential limits of integrable functions on the boundary
and also for Plemelji formulae.

\ \ \ \ \ 

By arguments of continuity and denseness, the integral transforms can also
be extended over Sobolev spaces.

\ 

Also for $p\in \left( 1,\infty \right) $ and $k=0,1,2,...,$ the following
mapping properties hold: 
\begin{equation*}
T_{\Omega }:W^{p,k}\left( \Omega ,Cl_{n}\right) \rightarrow W^{p,k+1}\left(
\Omega ,Cl_{n}\right)
\end{equation*}%
and 
\begin{equation*}
F_{\partial \Omega }:W^{p,k-\frac{1}{p}}\left( \partial \Omega
,Cl_{n}\right) \rightarrow W^{p,k}\left( \Omega ,Cl_{n}\right)
\end{equation*}

\bigskip\ \ \ \ 

Note that the functions in $W^{p,k-\frac{1}{p}}\left( \partial \Omega
,Cl_{n}\right) $ are fractionally (or rationally) smooth and the $\digamma
_{\partial \Omega }$ is an operator which increases the smoothness of a
function in the Slobedeckij space $W^{p,k-\frac{1}{p}}\left( \partial \Omega
,Cl_{n}\right) $ by $\frac{1}{p}$, and hence it maps functions from
Slobedeckij spaces to Sobolev spaces.

\ \ 

That is, the boundary transform $\digamma _{\partial \Omega }$ retrieves
regularity(smoothness) exponents of functions in $W^{p,k}\left( \Omega
\right) $ which were lost by the trace operator as: 
\begin{equation*}
tr_{\partial \Omega }:W^{p,k}\left( \Omega ,Cl_{n}\right) \rightarrow W^{p,k-%
\frac{1}{p}}\left( \partial \Omega ,Cl_{n}\right) \text{ and }\digamma
_{\partial \Omega }f=\digamma _{\partial \Omega }\left( tr_{\partial \Omega
}f\right)
\end{equation*}

\bigskip\ \ \ 

In general, the function spaces $W^{p,\gamma }(\partial \Omega ,Cl_{n})$ ,
for $\gamma $ a fraction are called Slobedeckij spaces with the following
definition :

\begin{definition}
$f\in W^{p,\gamma }(\partial \Omega ,Cl_{n})$ if $\left( 1+\mid \xi \mid
^{\gamma }\right) \overset{\wedge }{f}\in L^{p}\left( \partial \Omega
,Cl_{n}\right) $
\end{definition}

where $\overset{\wedge }{f}$ is the Fourier transform of $f$\ and the norm
is therefore given by 
\begin{equation*}
\parallel f\parallel _{W^{p,\gamma }(\partial \Omega ,Cl_{n})}:=\left(
\dint\limits_{\partial \Omega }\left( 1+\mid \xi \mid ^{\gamma }\right)
^{p}\mid \widehat{f}\mid ^{p}d\partial \Omega \right) ^{\frac{1}{p}}
\end{equation*}%
and these function spaces are used as spaces of symbols of
pseudo-differential operators, in which, singular integral operators are
special types of pseudo-differential operators.

\ \ 

Symbols are strong tools to study boundedness of pseudo-differential
operators, where the symbol of a singular integral operator is bounded if
and only if the operator is bounded, see \cite{mikh}. In particular, it is
indicated in \cite{gurkah}, \cite{mikh} that the $\pi $-operator is bounded
by showing its symbol is bounded.

\ \ 

\begin{proposition}
Let $f\in BC^{1}\left( \Omega \rightarrow Cl_{n}\right) $, with a bounded
derivative. Then 
\begin{equation*}
\Gamma _{\alpha }T_{\Omega }f=f.
\end{equation*}
That is $T_{\Omega }$ is a right inverse of $\Gamma _{\alpha }$.
\end{proposition}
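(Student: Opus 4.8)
The plan is to exploit the defining property of $\Psi _{\alpha }^{n}$ as a Cauchy kernel: in the sense of distributions on $S^{n-1}$, the operator $\Gamma _{\alpha }$ applied to $\Psi _{\alpha }^{n}\left( \omega ,\upsilon \right) $ in the free variable $\upsilon $ equals the Dirac distribution supported on the diagonal $\omega =\upsilon $. This is precisely what is established for the Gegenbauer kernel in \cite{lanck}, \cite{hlry}, and the normalizing constant $\frac{\pi }{\sigma _{n-1}\sin \pi \alpha }$ is chosen so that the associated jump has unit strength. Granting this, the identity is formally
\begin{equation*}
\Gamma _{\alpha }T_{\Omega }f(\upsilon )=\int_{\Omega }\Gamma _{\alpha }\Psi _{\alpha }^{n}\left( \omega ,\upsilon \right) f(\omega )\,d\omega =\int_{\Omega }\delta \left( \omega -\upsilon \right) f(\omega )\,d\omega =f(\upsilon ),\qquad \upsilon \in \Omega ,
\end{equation*}
so the real content of the argument is the justification of moving the first-order operator $\Gamma _{\alpha }$ under the integral sign past the singularity of the kernel.

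First I would fix $\upsilon _{0}\in \Omega $, choose $\epsilon >0$ with the geodesic ball $B_{\epsilon }(\upsilon _{0})\subseteq \Omega $, and split $T_{\Omega }f$ into the integral over $\Omega \setminus B_{\epsilon }(\upsilon _{0})$ plus the integral over $B_{\epsilon }(\upsilon _{0})$. On the first piece the kernel is smooth for $\upsilon $ near $\upsilon _{0}$, so $\Gamma _{\alpha }$ may be taken inside; since $\Gamma _{\alpha }\Psi _{\alpha }^{n}(\cdot ,\upsilon )=0$ off the diagonal, a spherical Cauchy--Pompeiu (Stokes) argument, the analogue of the one used in \cite{lakryan}, converts this into a boundary integral over $\partial B_{\epsilon }(\upsilon _{0})$ of the type $\int_{\partial B_{\epsilon }}\Psi _{\alpha }^{n}\left( \omega ,\upsilon _{0}\right) n(\omega )f(\omega )\,d\partial \Omega _{\omega }$. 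On the second piece I would write $f(\omega )=f(\upsilon _{0})+\bigl( f(\omega )-f(\upsilon _{0})\bigr) $; the bounded-derivative hypothesis gives $\bigl| f(\omega )-f(\upsilon _{0})\bigr| \leq C\,\mathrm{dist}(\omega ,\upsilon _{0})$, and this extra vanishing factor, together with the local integrability of $\Psi _{\alpha }^{n}$ and of its first $\upsilon $-derivative, forces the contribution of the difference to go to $0$ with $\epsilon $, while the $f(\upsilon _{0})$ term contributes, after applying $\Gamma _{\alpha }$ and using Stokes on the small ball, a second boundary integral over $\partial B_{\epsilon }(\upsilon _{0})$. Letting $\epsilon \to 0$, the two boundary integrals combine and, using the explicit leading singularity of the Gegenbauer kernel as $\omega \cdot \upsilon \to 1$ (read off from the power series of $\Psi _{\alpha }^{n}$ in powers of $\left( 1-\omega \cdot \upsilon \right) $ obtained above), they reduce to $f(\upsilon _{0})$ times the normalized surface integral of the Cauchy kernel, which equals $1$. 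This gives $\Gamma _{\alpha }T_{\Omega }f(\upsilon _{0})=f(\upsilon _{0})$, and since $\upsilon _{0}\in \Omega $ was arbitrary the proposition follows.

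The main obstacle is exactly this hypersingular interchange: $\Gamma _{\alpha }\Psi _{\alpha }^{n}$ is not locally integrable, so passing $\Gamma _{\alpha }$ under the integral is legitimate only in the principal-value-plus-jump sense, and the two boundary terms produced above must be shown to have a finite, correctly normalized limit. This is where the $BC^{1}$ hypothesis (bounded derivative) is used essentially, to supply the Lipschitz cancellation near the diagonal, and where one needs the precise asymptotics of $C_{\alpha }^{\frac{n+1}{2}}$ and $C_{\alpha -1}^{\frac{n+1}{2}}$ as their argument tends to $1$; once the leading term is identified with the Euclidean Cauchy kernel of the spherical Dirac operator, the residue computation is the same one that fixes the constant $\frac{\pi }{\sigma _{n-1}\sin \pi \alpha }$. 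A secondary point is that all the geometry is intrinsic to $S^{n-1}$, so the balls, the outward normal $n(\omega )$ and the Stokes formula are the spherical ones and $D_{\omega },\Gamma _{\omega }$ act tangentially; one therefore works in geodesic normal coordinates at $\upsilon _{0}$, or transfers the computation to the tangent space, where it collapses to the classical Teodorescu calculation for a Dirac operator.
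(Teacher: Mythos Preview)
The paper does not actually prove this proposition. It is stated in the preliminaries (Section~2) as a known fact, with the underlying construction of the kernel $\Psi_{\alpha}^{n}$ attributed to \cite{lanck} and \cite{hlry}, and is then used as input for the Borel--Pompeiu formula and everything that follows. So there is no argument in the paper to compare against.

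Your sketch is the standard Teodorescu argument transplanted to the sphere, and its overall architecture is sound: excise a small geodesic ball about the singularity, use $\Gamma_{\alpha}\Psi_{\alpha}^{n}=0$ off the diagonal together with a spherical Stokes/Cauchy--Pompeiu identity to convert the far piece to a boundary integral over $\partial B_{\epsilon}$, freeze $f$ at the center on the near piece using the $BC^{1}$ bound, and then evaluate the residue from the leading singularity of the Gegenbauer kernel. This is exactly the route taken in the references the paper cites, so in spirit you are reproducing what the paper imports rather than proving something new.

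One point to tighten: you phrase the key step as ``moving $\Gamma_{\alpha}$ under the integral,'' but as you yourself note $\Gamma_{\alpha}\Psi_{\alpha}^{n}$ is not locally integrable, so the correct formulation is not differentiation under the integral but rather computing $\Gamma_{\alpha}$ of the weakly singular integral $T_{\Omega}f$ directly via the excision argument; the outcome is the same, but the first description is only heuristic. A second, more substantive point is the residue computation: the normalization constant $\frac{\pi}{\sigma_{n-1}\sin\pi\alpha}$ and the precise leading behaviour of $C_{\alpha}^{\frac{n+1}{2}}(\omega\cdot\upsilon)$ as $\omega\cdot\upsilon\to 1$ are what make the limit equal to $f(\upsilon_{0})$ rather than a multiple of it, and this is the genuinely spherical part of the argument that does not reduce to the flat case by a mere change of coordinates. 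You gesture at this (``the residue computation is the same one that fixes the constant''), but in a full proof you would need to quote or reproduce that calculation from \cite{lanck} or \cite{hlry}; the Gegenbauer series in Proposition~1 of the present paper is not quite in the form needed to read off the leading singularity directly.
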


\begin{theorem}
(Borel-Pompeiu) For $f\in C^{1}\left( \Omega \rightarrow Cl_{n}\right) $, we
have 
\begin{equation*}
\chi _{\Omega }f=F_{\partial \Omega }f+T_{\Omega }\Gamma _{\alpha }f
\end{equation*}

where $\chi _{\Omega }$ is the usual characteristic function of the domain $%
\Omega $.
\end{theorem}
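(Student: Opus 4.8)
The plan is to transplant the classical Cauchy--Pompeiu argument to the spherical setting. The ingredients are three: a Leibniz rule for $\Gamma_\alpha$ acting on a Clifford-valued product $gf$; a Stokes/Green identity on a $C^1$-subdomain of $S^{n-1}$ for the tangential operator $\Gamma_\omega=-\omega\wedge D_\omega$; and the two facts already built into the construction of $\Psi_\alpha^n$ and used in the preceding Proposition, namely that $\omega\mapsto\Psi_\alpha^n(\omega,\upsilon)$ is right spherical monogenic of order $\alpha$ off the diagonal, while $\Gamma_\alpha$ applied to it yields a unit point mass at $\omega=\upsilon$. (The latter is why the normaliser $\pi/(\sigma_{n-1}\sin\pi\alpha)$ occurs and why the construction degenerates when $\alpha\in\mathbb{Z}$, where $\Gamma_\alpha$ has a nontrivial kernel of spherical monogenics.)

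I would fix $\upsilon\in S^{n-1}$ and, for small $\varepsilon>0$, excise the geodesic ball $B(\upsilon,\varepsilon)\subseteq S^{n-1}$, working on $\Omega_\varepsilon:=\Omega\setminus\overline{B(\upsilon,\varepsilon)}$. On $\Omega_\varepsilon$ the kernel is smooth, so the Leibniz rule together with the spherical Stokes identity produces
\[
\int_{\partial\Omega_\varepsilon}\Psi_\alpha^n(\omega,\upsilon)\,n(\omega)\,f(\omega)\,d\partial\Omega_\omega
=\int_{\Omega_\varepsilon}\Psi_\alpha^n(\omega,\upsilon)\,(\Gamma_\alpha f)(\omega)\,d\omega,
\]
where $n(\omega)$ is the outward unit conormal of $\partial\Omega_\varepsilon$ inside $S^{n-1}$, and the term carrying $\Psi_\alpha^n(\,\cdot\,,\upsilon)\,\Gamma_\alpha$ has disappeared because $\Psi_\alpha^n(\,\cdot\,,\upsilon)$ is right monogenic on $\Omega_\varepsilon$. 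Since the diagonal singularity of $\Psi_\alpha^n$ is of order $n-2$ on the $(n-1)$-sphere, hence locally integrable, letting $\varepsilon\to0$ turns the right-hand side into $T_\Omega(\Gamma_\alpha f)(\upsilon)$.

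It then remains to pass to the limit on the left. Splitting $\partial\Omega_\varepsilon=\partial\Omega\,\cup\,\partial B(\upsilon,\varepsilon)$ with the geodesic sphere carrying the orientation it inherits from $\Omega_\varepsilon$, the $\partial\Omega$-part is $F_{\partial\Omega}f(\upsilon)$ by definition, while the part over $\partial B(\upsilon,\varepsilon)$ tends, as $\varepsilon\to0$, to a multiple of $\chi_\Omega(\upsilon)\,f(\upsilon)$: by continuity of $f$ this is the residue of $\Psi_\alpha^n(\omega,\upsilon)\,n(\omega)$ at the diagonal, obtained by extracting the leading-order behaviour of the Gegenbauer combination $C_\alpha^{(n+1)/2}(\omega\cdot\upsilon)-\omega\upsilon\,C_{\alpha-1}^{(n+1)/2}(\omega\cdot\upsilon)$ as $\omega\cdot\upsilon\to1$ and recognising it as the Euclidean Cauchy kernel of the Dirac operator on the tangent hyperplane $T_\upsilon S^{n-1}\cong\mathbb{R}^{n-1}$; the constant $\pi/(\sigma_{n-1}\sin\pi\alpha)$ is exactly the one making this residue $1$. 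Collecting the three contributions and rearranging, keeping careful track of the orientation signs, gives $\chi_\Omega f=F_{\partial\Omega}f+T_\Omega\Gamma_\alpha f$.

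I expect the main obstacle to be the first display: one has to pin down the Stokes/Green identity for $\Gamma_\omega=-\omega\wedge D_\omega$ on a curved domain, where the position vector $\omega$ occurring in $\Gamma_\omega$ and the nonvanishing mean curvature of $S^{n-1}$ must combine so that the shift by $\alpha$ enters cleanly and no stray volume term survives; and one must then execute the residue computation at $\omega=\upsilon$, i.e. match the singular part of the Gegenbauer kernel with the flat Cauchy kernel. Both are the spherical counterparts of what already underlies the preceding Proposition (that $T_\Omega$ is a right inverse of $\Gamma_\alpha$), so once they are in hand the rest is the routine excision-and-limit argument sketched above; the boundary case $\upsilon\in\partial\Omega$, not covered by $\chi_\Omega$, would be treated separately via the singular integral $\widetilde{F}_{\partial\Omega}$ and the Plemelj relations.
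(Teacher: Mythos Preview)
The paper does not prove this theorem: it is stated without proof, presumably as a known result in the spherical Clifford-analytic setting, with the references to Van Lancker \cite{lanck} and Liu--Ryan \cite{hlry} (where the fundamental solution $\Psi_\alpha^n$ is constructed) serving as the implicit source. There is therefore no argument in the paper to compare your sketch against.

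That said, your outline is the correct and standard one: the excision-and-limit argument transported from the Euclidean Borel--Pompeiu proof, using a spherical Stokes identity for $\Gamma_\omega$ together with the right-monogenicity of $\Psi_\alpha^n(\,\cdot\,,\upsilon)$ off the diagonal, and a residue computation as $\varepsilon\to0$. The two technical points you flag --- pinning down the Green/Stokes formula for the tangential operator $\Gamma_\omega=-\omega\wedge D_\omega$ on a subdomain of $S^{n-1}$ so that the $\alpha$-shift enters without stray terms, and matching the singular part of the Gegenbauer combination with the flat Cauchy kernel on $T_\upsilon S^{n-1}$ --- are exactly where the genuine work lies, and are handled in the cited references rather than here. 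Your sketch would constitute a proof once those two ingredients are supplied, which is more than the paper itself offers.
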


\begin{corollary}
(Cauchy Integral Formula for Spherical Monogenics)

\begin{equation*}
f\in \ker \Gamma _{\alpha }\Leftrightarrow \text{ \ }f(v)=F_{\partial \Omega
}f(v)
\end{equation*}
\end{corollary}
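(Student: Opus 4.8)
The plan is to read off both implications directly from the Borel--Pompeiu formula $\chi_\Omega f = F_{\partial\Omega}f + T_\Omega\Gamma_\alpha f$ together with the preceding Proposition that $T_\Omega$ is a right inverse of $\Gamma_\alpha$ (i.e. $\Gamma_\alpha T_\Omega = \mathrm{id}$ on the relevant class). Throughout, I would read the identity $f(v) = F_{\partial\Omega}f(v)$ as holding for all $v$ in the interior domain $\Omega$, where the characteristic function satisfies $\chi_\Omega \equiv 1$; this is the only place the restriction $v\in\Omega$ matters.

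For the forward implication, suppose $f \in \ker\Gamma_\alpha$, so $\Gamma_\alpha f \equiv 0$ on $\Omega$. Then $T_\Omega\Gamma_\alpha f = T_\Omega(0) = 0$, and substituting into Borel--Pompeiu gives $\chi_\Omega f = F_{\partial\Omega}f$; restricting to $v\in\Omega$ yields $f(v) = F_{\partial\Omega}f(v)$. For the converse, assume $f(v) = F_{\partial\Omega}f(v)$ for every $v\in\Omega$. Subtracting this from the Borel--Pompeiu identity on $\Omega$ leaves $T_\Omega\Gamma_\alpha f = 0$. Now apply $\Gamma_\alpha$ to both sides: by the right-inverse Proposition the left side is $\Gamma_\alpha T_\Omega(\Gamma_\alpha f) = \Gamma_\alpha f$, while the right side is $\Gamma_\alpha 0 = 0$, so $\Gamma_\alpha f = 0$ on $\Omega$, i.e. $f\in\ker\Gamma_\alpha$.

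The one genuine subtlety, and the step I would treat most carefully, is the regularity bookkeeping needed to feed $\Gamma_\alpha f$ legitimately into $T_\Omega$ and then invoke $\Gamma_\alpha T_\Omega = \mathrm{id}$: one needs $\Gamma_\alpha f$ to lie in the space on which that identity was established (here $BC^1$ with bounded derivative, or the appropriate Sobolev class), and to know $T_\Omega\Gamma_\alpha f$ vanishes as an element of that space, not merely pointwise. Under the hypotheses on $f$ already assumed for Borel--Pompeiu this is routine. As an alternative to the second half that sidesteps the right-inverse step, one may instead differentiate the boundary integral $F_{\partial\Omega}f$ under the integral sign for $v\in\Omega$ and use that the Gegenbauer-type kernel $\Psi_\alpha^n(\omega,v)$ is itself annihilated by $\Gamma_\alpha$ in the $v$ variable away from $\omega$, so that $\Gamma_\alpha f = \Gamma_\alpha F_{\partial\Omega}f = 0$; this route shifts the main work onto justifying that $\Psi_\alpha^n$ is a fundamental solution, which the construction from the Gegenbauer function was designed to guarantee.
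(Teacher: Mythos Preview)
Your proposal is correct and matches the paper's intended approach: the paper states this corollary immediately after the Borel--Pompeiu theorem with no explicit proof, the implicit argument being precisely the one you give---the forward direction drops the $T_\Omega\Gamma_\alpha f$ term, and the converse recovers $\Gamma_\alpha f=0$ from $T_\Omega\Gamma_\alpha f=0$ via the right-inverse Proposition $\Gamma_\alpha T_\Omega=\mathrm{id}$. Your discussion of the regularity bookkeeping and the alternative route through differentiating $F_{\partial\Omega}f$ goes beyond what the paper spells out, but is appropriate care rather than a deviation.
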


\begin{corollary}
From the Borel-Pompeiu and the CIFs, a traceless$\ \gamma -$regular function
is a null function over $\Omega $.
\end{corollary}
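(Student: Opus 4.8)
The plan is to read off the conclusion by feeding the hypotheses into the two preceding results, so that the corollary becomes an immediate consequence of the Borel--Pompeiu identity. First I would pin down the meaning of the hypothesis: a traceless $\Gamma$-regular function $f$ over $\Omega $ is one that is spherical left monogenic of order $\alpha $, i.e. $\Gamma _{\alpha }f=0$ on $\Omega $ (equivalently $f\in \ker \Gamma _{\alpha }$), and whose boundary trace vanishes identically, $tr_{\partial \Omega }f=0$ on $\partial \Omega $.

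Next I would apply the Borel--Pompeiu formula to this $f$. Since $\Gamma _{\alpha }f=0$ pointwise on $\Omega $, the Teodorescu term $T_{\Omega }\Gamma _{\alpha }f$ is identically zero, and the formula $\chi _{\Omega }f=F_{\partial \Omega }f+T_{\Omega }\Gamma _{\alpha }f$ collapses to $\chi _{\Omega }f=F_{\partial \Omega }f$. Then I would invoke the fact already recorded in the discussion preceding the Borel--Pompeiu theorem, namely that the boundary operator depends only on the trace, $F_{\partial \Omega }f=F_{\partial \Omega }\left( tr_{\partial \Omega }f\right) $, because its defining integral runs only over $\omega \in \partial \Omega $ and sees $f$ solely through its boundary values. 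As $f$ is traceless, $tr_{\partial \Omega }f=0$, hence $F_{\partial \Omega }f=0$, and therefore $\chi _{\Omega }f\equiv 0$, which is exactly $f\equiv 0$ on $\Omega $; so $f$ is a null function there.

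An equivalent route runs through the Cauchy Integral Formula for spherical monogenics: since $f\in \ker \Gamma _{\alpha }$ we have $f(v)=F_{\partial \Omega }f(v)$ for $v\in \Omega $, and the right-hand side is again governed solely by $tr_{\partial \Omega }f=0$, so $f(v)=0$ for every $v\in \Omega $. Either way, the only step that requires a word of justification is the reduction $F_{\partial \Omega }f=F_{\partial \Omega }\left( tr_{\partial \Omega }f\right) $, which follows at once from the explicit form of the Cauchy kernel $\Psi _{\alpha }^{n}\left( \omega ,\upsilon \right) $ entering the definition of $F_{\partial \Omega }$; once this is granted, no estimates are needed. I do not expect a genuine obstacle here, since the entire content of the statement is already contained in the Borel--Pompeiu identity together with the vanishing of both $\Gamma _{\alpha }f$ on $\Omega $ and the boundary data on $\partial \Omega $.
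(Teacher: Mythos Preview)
Your proposal is correct and matches the paper's intended argument: the corollary is stated without proof precisely because it falls out immediately from the Borel--Pompeiu formula (or equivalently the Cauchy Integral Formula) once one notes that $\Gamma_{\alpha}f=0$ kills the $T_{\Omega}$ term and $tr_{\partial\Omega}f=0$ kills the $F_{\partial\Omega}$ term. Your identification of $F_{\partial\Omega}f=F_{\partial\Omega}(tr_{\partial\Omega}f)$ as the only step needing a word is apt, and the paper records exactly this relation in the discussion of the trace operator.
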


\section{\textbf{Fundamental Results on the Spherical Dirac Operator}}

In this section, we present fundamental results on $\Gamma _{\alpha }$ ,
solve boundary value problems over domains in the unit sphere like cases of
domains in Euclidean spaces, using the algebraic and analytic tools
presented in the preliminary.

\begin{proposition}
Let $g\in W^{2,1}\left( \Omega ,Cl_{n}\right) $, $h\in W^{2,\frac{1}{2}%
}\left( \partial \Omega ,Cl_{n}\right) $. Then the inhomogeneous BVP:

\begin{equation*}
\left\{ 
\begin{array}{c}
\Gamma _{\alpha }f=g\text{ on }\Omega \\ 
trf=h\text{, on }\partial \Omega%
\end{array}%
\right.
\end{equation*}

has a unique solution $f\in W^{2,2}\left( \Omega ,Cl_{n}\right) $ given by%
\begin{equation*}
f=F_{\partial \Omega }h+T_{\Omega }g.
\end{equation*}

which is almost a $C^{2}-$function for no $%
\mathbb{R}
^{n}\supseteq \Omega $ but is almost a $C^{1}-$function in $%
\mathbb{R}
^{1}$
\end{proposition}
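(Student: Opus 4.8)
The plan is to read off the form of the solution from the Borel--Pompeiu theorem and then, in the reverse direction, to verify that the candidate it produces actually solves the problem. First I would establish \emph{uniqueness}: if $f\in W^{2,2}(\Omega ,Cl_{n})$ solves the system, apply Borel--Pompeiu, $\chi _{\Omega }f=F_{\partial \Omega }f+T_{\Omega }\Gamma _{\alpha }f$. On $\Omega$ the characteristic function is identically $1$, so substituting $\Gamma _{\alpha }f=g$ and $\operatorname{tr}f=h$ gives $f=F_{\partial \Omega }h+T_{\Omega }g$. Hence there is at most one solution, and it must be the function displayed in the statement.

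For \emph{existence} I would set $f:=F_{\partial \Omega }h+T_{\Omega }g$ and check the two equations in turn. The interior equation is immediate: by the Cauchy integral formula for spherical monogenics, $F_{\partial \Omega }h\in \ker \Gamma _{\alpha }$ on $\Omega$, so $\Gamma _{\alpha }F_{\partial \Omega }h=0$, while the earlier proposition (extended from $BC^{1}$ to Sobolev data by the density argument of the Preliminaries) gives $\Gamma _{\alpha }T_{\Omega }g=g$; adding these, $\Gamma _{\alpha }f=g$. The boundary equation $\operatorname{tr}f=h$ is the point that needs work: one computes the non-tangential boundary limits of $F_{\partial \Omega }h$ and of $T_{\Omega }g$ on $\partial \Omega$ using the Plemelj--Sokhotski jump relations for $F_{\partial \Omega }$ and for the singular operator $\widetilde{F}_{\partial \Omega }$. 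Alternatively, once $\Gamma _{\alpha }f=g$ is known, a second application of Borel--Pompeiu to $f$ yields $f=F_{\partial \Omega }(\operatorname{tr}f)+T_{\Omega }g$; comparing with the definition of $f$ forces $F_{\partial \Omega }(\operatorname{tr}f-h)=0$ on $\Omega$, whence $\operatorname{tr}f=h$ on $\partial \Omega$.

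The Sobolev regularity is then read off from the mapping properties in the Preliminaries: with $p=2$, the Teodorescu term satisfies $T_{\Omega }g\in W^{2,2}(\Omega ,Cl_{n})$ because $g\in W^{2,1}$, and the Cauchy term $F_{\partial \Omega }h$ inherits its regularity from the trace datum $h$ through the stated mapping property of $F_{\partial \Omega }$, so the two contributions give the asserted membership $f\in W^{2,2}(\Omega ,Cl_{n})$. The closing remark is just the Sobolev embedding theorem on the $(n-1)$-dimensional manifold $\Omega \subseteq S^{n-1}$: a $W^{2,2}$ function lies in $C^{1}$ only when $2-\frac{n-1}{2}>1$, i.e.\ when the sphere is one-dimensional, and it is never forced to be $C^{2}$, since that would require $2-\frac{n-1}{2}>2$.

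The main obstacle I anticipate is the boundary identity $\operatorname{tr}f=h$: it hinges on the precise jump behaviour of the Cauchy-type operator $F_{\partial \Omega }$ across $\partial \Omega$ and on interpreting the relevant boundary integrals in the principal-value sense so that the Plemelj formulae apply. By contrast, the verification $\Gamma _{\alpha }f=g$ and the Sobolev bookkeeping are routine consequences of the results already collected in the preliminary section.
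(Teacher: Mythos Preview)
Your approach is essentially the paper's: the paper's proof of the proposition is a one-line appeal to the Borel--Pompeiu formula to read off $f=F_{\partial\Omega}h+T_{\Omega}g$, and the ``almost $C^{k}$'' remark is handled separately in the following corollary via Sobolev embedding. You supply considerably more detail (the existence check, the trace verification via a second application of Borel--Pompeiu, and the mapping-property bookkeeping) than the paper does, but the route is the same.

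One small discrepancy to flag: for the closing embedding remark the paper works with the ambient dimension $n$, writing the condition as $2>\tfrac{n}{2}+k$ and concluding that only $k=1$, $n=1$ survives; you instead use the intrinsic dimension $n-1$ of $S^{n-1}$, which would shift the threshold by one. The paper's phrasing ``for no $\mathbb{R}^{n}\supseteq\Omega$'' and ``in $\mathbb{R}^{1}$'' indicates the former convention, so to match the statement as written you should use $\tfrac{n}{2}$ rather than $\tfrac{n-1}{2}$.
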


\begin{proof}
The unique solution of the BVP is obtained using the Borel-Pompeiu formula
which is given by%
\begin{equation*}
f=F_{\partial \Omega }h+T_{\Omega }g.
\end{equation*}
\end{proof}

\begin{corollary}
The analytic solution $f$\ of the BVP given above is almost a $C^{1}-$
function in $%
\mathbb{R}
^{1}$ but almost a $C^{2}-$ function for no $%
\mathbb{R}
^{n}$.
\end{corollary}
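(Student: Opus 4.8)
The plan is to read off the assertion directly from the explicit representation $f=F_{\partial \Omega }h+T_{\Omega }g$ of the previous Proposition, together with the Sobolev mapping properties of $T_{\Omega }$ and $F_{\partial \Omega }$ recorded in Section~2 and the classical Sobolev embedding theorem. First I would recall that the previous Proposition already locates the solution in $W^{2,2}\left( \Omega ,Cl_{n}\right) $: the Teodorescu term is controlled by the fact that $T_{\Omega }$ raises the Sobolev exponent by one, so $T_{\Omega }g\in W^{2,2}\left( \Omega ,Cl_{n}\right) $ when $g\in W^{2,1}\left( \Omega ,Cl_{n}\right) $, while $F_{\partial \Omega }h$ is, by the Cauchy integral formula, a spherical monogenic function of the boundary datum $h\in W^{2,\frac{1}{2}}\left( \partial \Omega ,Cl_{n}\right) $ of at least the same $W^{2,2}$ regularity. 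Hence the question collapses to a single one: for which ambient dimension does $W^{2,2}$ of a smooth bounded domain in $S^{n-1}$ embed continuously into $C^{1}$, and for which into $C^{2}$?

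Next I would invoke the Sobolev embedding theorem on the $\left( n-1\right) $-dimensional manifold $S^{n-1}$, applied componentwise to the real coordinate functions $f_{A}$: for a bounded smooth domain $\Omega $ of dimension $d$ one has the continuous inclusion $W^{p,k}\left( \Omega ,Cl_{n}\right) \hookrightarrow C^{\ell ,\gamma }\left( \overline{\Omega },Cl_{n}\right) $ as soon as $k-\dfrac{d}{p}>\ell +\gamma $, with the borderline equality yielding membership in $\bigcap_{\gamma <1}C^{\ell ,\gamma }$ rather than in $C^{\ell +1}$. With $p=2$ and $k=2$ the governing quantity is $2-\dfrac{d}{2}$.

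Then I would read off the two halves of the statement. The inequality required for $C^{2}$ regularity, $2-\dfrac{d}{2}>2$, forces $d<0$ and therefore fails for every admissible domain, so $f$ is ``almost a $C^{2}$ function'' for no ambient Euclidean space, the qualifier ``almost'' recording that even at the best exponent one lands only in a Hölder--Zygmund class $C^{1,\gamma }$, $\gamma <1$, never in $C^{2}$. By contrast the inequality required for $C^{1}$ regularity, $2-\dfrac{d}{2}\geq 1$, survives only in the low-dimensional (borderline) case $d\leq 2$, where $W^{2,2}\hookrightarrow \bigcap_{\gamma <1}C^{0,\gamma }$ and one says $f$ is almost $C^{1}$; this is the assertion ``almost a $C^{1}$ function in $\mathbb{R}^{1}$''.

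The main obstacle I anticipate is bookkeeping rather than conceptual. One must be careful (i) that the relevant dimension is that of the sphere, $n-1$, not $n$, so that the Sobolev indices are computed against the correct surface measure; (ii) about the boundary-trace exponents, so that the datum $h\in W^{2,\frac{1}{2}}\left( \partial \Omega ,Cl_{n}\right) $ genuinely produces, via $F_{\partial \Omega }$, a summand of the same $W^{2,2}$ regularity as $T_{\Omega }g$ rather than a rougher one; and (iii) about the critical-exponent case of the embedding, where the honest conclusion is membership in $\bigcap_{\gamma <1}C^{\ell ,\gamma }$ rather than in $C^{\ell +1}$ --- precisely the phenomenon the word ``almost'' is meant to encode. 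Once these three points are settled, the corollary is immediate from the previous Proposition and the embedding inequality.
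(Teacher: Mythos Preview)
Your approach is essentially the paper's: place $f$ in $W^{2,2}(\Omega,Cl_n)$ via the previous Proposition and then read off $C^k$-regularity from the Sobolev embedding inequality. The paper's proof is the one-line check $2>\frac{n}{2}+k$ (using the ambient $n$ rather than your intrinsic spherical dimension $d=n-1$), which forces $k=1$, $n=1$; your additional bookkeeping about the correct dimension, the boundary term, and the meaning of ``almost'' is more careful than the original but follows the same route.
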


\begin{proof}
The solution $f$ given above\ is a function in the Sobolev space $%
W^{2,2}\left( \Omega ,Cl_{n}\right) $ and is almost a $C^{k}$-function over $%
\Omega $ contained in $%
\mathbb{R}
^{n}$, if 
\begin{equation*}
2>\frac{n}{2}+k
\end{equation*}%
where $k\in 
\mathbb{N}
$.

But the last inequality holds only when $k=1$ and $n=1$ and that prove the
result.
\end{proof}

\begin{proposition}
$\left( \text{Representation of a Function with Compact Support}\right) $

Let $f\in C_{c}^{1}\left( \Omega \rightarrow Cl_{n}\right) $ . Then $f$ has
a representation given by 
\begin{equation*}
f(v)=T_{\Omega }\left( \Gamma _{\alpha }f\right) (v)
\end{equation*}%
for $\nu \in S^{n-1}$.
\end{proposition}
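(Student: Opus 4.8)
The plan is to read this off directly from the Borel--Pompeiu formula, using the single extra fact that a compactly supported $C^1$ function has vanishing trace on $\partial\Omega$. First I would invoke the Borel--Pompeiu theorem proved above: for $f\in C^{1}(\Omega\to Cl_{n})$,
\begin{equation*}
\chi_{\Omega}f = F_{\partial\Omega}f + T_{\Omega}\Gamma_{\alpha}f .
\end{equation*}
Since $f\in C^{1}_{c}(\Omega\to Cl_{n})$, its support is a compact subset of the open domain $\Omega$, so $f$ vanishes in a neighbourhood of $\partial\Omega$; in particular $\mathrm{supp}\,f\cap\partial\Omega=\emptyset$ and hence $tr_{\partial\Omega}f=0$.

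Next I would use the factorization $F_{\partial\Omega}f=F_{\partial\Omega}(tr_{\partial\Omega}f)$ recorded in the preliminaries to conclude that the boundary term drops out, i.e. $F_{\partial\Omega}f=0$; equivalently, one sees this straight from the defining integral for $F_{\partial\Omega}$, whose integrand carries the boundary values $f|_{\partial\Omega}\equiv 0$. Substituting $F_{\partial\Omega}f=0$ into the Borel--Pompeiu identity leaves
\begin{equation*}
\chi_{\Omega}f = T_{\Omega}\Gamma_{\alpha}f .
\end{equation*}
Evaluating at $v\in\Omega$ gives $\chi_{\Omega}(v)=1$, so $f(v)=T_{\Omega}(\Gamma_{\alpha}f)(v)$; for $v\in S^{n-1}\setminus\overline{\Omega}$ both sides vanish, the left because $f$ is supported inside $\Omega$ and the right by the same identity, so the stated equality in fact holds for every $v\in S^{n-1}$.

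I expect the only point needing genuine care to be the vanishing $F_{\partial\Omega}f=0$: it rests on $f$ being supported strictly inside $\Omega$ (so the trace is literally the zero function, not merely small) together with the continuity/denseness extension of $F_{\partial\Omega}$ behaving correctly on such functions. Everything else is a direct specialization of the already-established Borel--Pompeiu formula and of the right-inverse property $\Gamma_{\alpha}T_{\Omega}=\mathrm{id}$, so no new estimates on the Gegenbauer kernel $\Psi_{\alpha}^{n}$ are required.
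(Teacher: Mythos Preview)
Your proposal is correct and follows exactly the paper's approach: apply the Borel--Pompeiu formula and observe that the boundary term $F_{\partial\Omega}f$ vanishes because a compactly supported function has zero trace on $\partial\Omega$. You supply more detail than the paper (explicitly invoking $F_{\partial\Omega}f=F_{\partial\Omega}(tr_{\partial\Omega}f)$ and treating the case $v\notin\overline{\Omega}$), but the argument is the same.
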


\begin{proof}
Let $f$ be a $C^{1}-$ function with compact support over $\Omega \subseteq
S^{n-1}$. Then from Borel-Pompeiu formula we have the required result, since
the boundary integral is zero.

\ \ \ 

We see here that $T_{\Omega }$ is both right and left inverse of the
spherical Dirac operator $\Gamma _{\alpha }$.
\end{proof}

\begin{proposition}
$\left( \text{Representation of a Global Function}\right) $

If $\Omega $ is a global domain in the unit sphere, then every function $f$
in $W^{2,2}(\Omega ,Cl_{n})$(or in $C^{1}(\Omega ,Cl_{n})$ ) can be
represented over $\Omega $ by%
\begin{equation*}
f\left( v\right) =\dint\limits_{\Omega }\Psi _{\alpha }^{n}\left( w-v\right)
\Gamma _{\alpha }\left( w\right) f\left( w\right) d\Omega _{w}.
\end{equation*}
\end{proposition}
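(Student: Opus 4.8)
The plan is to derive this representation formula as a direct consequence of the Borel--Pompeiu formula (the ``Theorem'' above), specialized to the case where $\Omega$ is global, so that there is no nontrivial boundary term. Recall that Borel--Pompeiu states
\begin{equation*}
\chi_{\Omega} f = F_{\partial\Omega} f + T_{\Omega}\Gamma_{\alpha} f,
\end{equation*}
and that $T_{\Omega}(f)(\upsilon) = \int_{\Omega}\Psi_{\alpha}^{n}(\omega,\upsilon) f(\omega)\, d\omega$. The core of the argument is therefore to explain why, when $\Omega$ is ``global'' in the unit sphere $S^{n-1}$, the boundary operator $F_{\partial\Omega}f$ vanishes (or rather, is absent), and why $\chi_{\Omega} f = f$ on $\Omega$.

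\textbf{First} I would fix the meaning of ``global domain'': $\Omega$ is either $S^{n-1}$ itself, or a domain exhausting $S^{n-1}$ up to a set that carries no boundary in the relevant sense, so that $\partial\Omega = \emptyset$. Since $S^{n-1}$ is a compact manifold without boundary, the boundary term $F_{\partial\Omega}f$ in Borel--Pompeiu is an integral over the empty set and hence is zero; this is the exact spherical analogue of the Euclidean situation where a compactly supported function has no boundary contribution (cf.\ the Proposition on representation of functions with compact support above). \textbf{Next}, on $\Omega$ the characteristic function $\chi_{\Omega}$ equals $1$, so Borel--Pompeiu collapses to $f(\upsilon) = T_{\Omega}\Gamma_{\alpha}f(\upsilon)$ for $\upsilon\in\Omega$. \textbf{Finally} I would unfold the definition of $T_{\Omega}$ applied to $\Gamma_{\alpha} f$, writing the kernel in the paper's shorthand $\Psi_{\alpha}^{n}(w-v)$ for $\Psi_{\alpha}^{n}(w,v)$, to obtain
\begin{equation*}
f(v) = \int_{\Omega}\Psi_{\alpha}^{n}(w-v)\,\Gamma_{\alpha}(w) f(w)\, d\Omega_{w},
\end{equation*}
which is the claimed formula. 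For the $W^{2,2}(\Omega,Cl_n)$ case one passes from $C^{1}$ to the Sobolev setting by the density and continuity argument already invoked in the preliminaries, using the stated mapping property $T_{\Omega}:W^{p,k}\to W^{p,k+1}$ with $p=2$, $k=1$.

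\textbf{The main obstacle} I anticipate is not analytic but definitional: making precise what ``global domain'' means on $S^{n-1}$ and justifying rigorously that the boundary integral truly drops out. If $\Omega = S^{n-1}$ there is genuinely no boundary and the claim is immediate; if $\Omega$ is merely a large subdomain, one must either interpret ``global'' as ``$\partial\Omega=\emptyset$'' or supply an exhaustion/limiting argument controlling $F_{\partial\Omega_j}f$ as $\Omega_j\uparrow S^{n-1}$, which requires the boundary operator's continuity in the appropriate trace norm. A secondary point to check is orientation and the placement of the kernel: here the kernel $\Psi_{\alpha}^{n}(w-v)$ stands to the \emph{left} of $\Gamma_{\alpha} f$, matching the left-monogenic convention and the form of $T_{\Omega}$ in the preliminaries, so no transpose or conjugation is needed — but this should be stated explicitly so the reader sees it is consistent with the Teodorescu transform defined earlier.
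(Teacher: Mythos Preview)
Your proposal is correct and follows essentially the same route as the paper: the paper declares that ``global domain'' means $\Omega = S^{n-1}$, invokes $\partial\partial = \varnothing$ to conclude $\partial\Omega = \varnothing$ so that the boundary integral $F_{\partial\Omega}f$ vanishes, and then reads off the representation from Borel--Pompeiu. Your anticipated ``main obstacle'' is resolved in the paper exactly as you guessed in your first option --- by fiat, $\Omega$ is the whole sphere --- so no exhaustion argument is needed.
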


\begin{proof}
$\Omega $ is a global domain in the unit sphere means that $\Omega $ is the
whole sphere. Thus as the sphere is a boundary hypersurface,

its boundary is empty set ( we recall from differential topology that $%
\partial \partial =\varnothing $). Therefore the $\partial -$integral of $f:$
\begin{eqnarray*}
\dint\limits_{\partial \Omega }\Psi _{\alpha }^{n}\left( w-v\right)
n(w)f\left( w\right) d\Omega _{w} &=&\dint\limits_{\partial \Omega =\partial
\partial \left( \bullet \right) }\Psi _{\alpha }^{n}\left( w-v\right)
n(w)f\left( w\right) d\Omega _{w} \\
&=&\ \dint\limits_{\varnothing }\Psi _{\alpha }^{n}\left( w-v\right)
n(w)f\left( w\right) d\Omega _{w} \\
&=&0
\end{eqnarray*}%
and therefore from Borel-Pompeiu formula, we have the result.
\end{proof}

\bigskip\ \ \ 

The Lebesgue space $L^{2}\left( \Omega ,Cl_{n}\right) $ with a Clifford
valued inner product given by

\begin{equation}
\left\langle f,g\right\rangle _{\Omega }=\dint\limits_{\Omega }\overline{f}%
gd\Omega  \label{inp}
\end{equation}

$f,g\in L^{2}\left( \Omega ,Cl_{n}\right) $ is a Hilbert space and therefore
has an orthogonal relationship given by:

\ \ 

\begin{proposition}
In the Hilbert space $L^{2}\left( \Omega ,Cl_{n}\right) $, with respect to
the inner product (\ref{inp})$\ $the orthogonal space $\left( B_{\alpha
}^{2}\left( \Omega ,Cl_{n}\right) \right) ^{\bot }$ of the generalized
Bergman space $B_{\alpha }^{2}\left( \Omega ,Cl_{n}\right) $ is given by :

\begin{equation*}
\left( B_{\alpha }^{2}\left( \Omega ,Cl_{n}\right) \right) ^{\bot }=%
\overline{\Gamma }_{\alpha }\left( W_{0}^{2,1}\left( \Omega ,Cl_{n}\right)
\right)
\end{equation*}

where the Bergman space $B_{\alpha }^{2}\left( \Omega ,Cl_{n}\right) $ is
the set of all Clifford valued square integrable functions which are
annihilated by the spherical Dirac operator $\Gamma _{\alpha }$ over $\Omega 
$ .
\end{proposition}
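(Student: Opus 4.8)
The plan is to deduce the identity from the stronger statement that $L^{2}\left( \Omega ,Cl_{n}\right) $ decomposes as the orthogonal direct sum $B_{\alpha }^{2}\left( \Omega ,Cl_{n}\right) \oplus \overline{\Gamma }_{\alpha }\left( W_{0}^{2,1}\left( \Omega ,Cl_{n}\right) \right) $; taking orthogonal complements then gives the proposition. I would carry this out in three stages: orthogonality of the two pieces, density of their algebraic sum, and closedness of each piece, after which a standard Hilbert-space argument finishes the job (a sum of two mutually orthogonal closed subspaces is closed, and a closed dense subspace is the whole space).

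\emph{Orthogonality.} Let $u\in W_{0}^{2,1}\left( \Omega ,Cl_{n}\right) $ and $f\in B_{\alpha }^{2}\left( \Omega ,Cl_{n}\right) $. The key input is a Stokes/Green identity on the $c^{1}$-domain $\Omega \subseteq S^{n-1}$ exhibiting $\overline{\Gamma }_{\alpha }$ as the formal adjoint of $\Gamma _{\alpha }$ for the pairing (\ref{inp}), of the shape $\left\langle \overline{\Gamma }_{\alpha }u,f\right\rangle _{\Omega }=\pm \left\langle u,\Gamma _{\alpha }f\right\rangle _{\Omega }+\int_{\partial \Omega }\left( tr\,u\right) \,n\,\left( tr\,f\right) \,d\partial \Omega _{\omega }$. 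Since $u$ has vanishing trace the boundary integral drops, and since $f\in \ker \Gamma _{\alpha }$ the volume term vanishes; hence $\overline{\Gamma }_{\alpha }\left( W_{0}^{2,1}\right) \perp B_{\alpha }^{2}$.

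\emph{Density.} Suppose $g\in L^{2}\left( \Omega ,Cl_{n}\right) $ is orthogonal to both summands. Testing the same identity against arbitrary $u\in C_{c}^{1}\left( \Omega ,Cl_{n}\right) \subseteq W_{0}^{2,1}$ — where now there is no boundary term — yields $\left\langle u,\Gamma _{\alpha }g\right\rangle _{\Omega }=0$ for all such $u$, so $\Gamma _{\alpha }g=0$ in the distributional sense. Because $\Gamma _{\alpha }=\Gamma _{\omega }+\alpha $ is a first order elliptic operator on the compact manifold $S^{n-1}$, a Weyl-type regularity lemma applies and $g$ coincides almost everywhere with a genuine solution of $\Gamma _{\alpha }g=0$, i.e. $g\in B_{\alpha }^{2}$. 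Together with $g\perp B_{\alpha }^{2}$ this forces $g=0$, so $B_{\alpha }^{2}+\overline{\Gamma }_{\alpha }\left( W_{0}^{2,1}\right) $ is dense in $L^{2}$.

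\emph{Closedness, and the main obstacle.} $B_{\alpha }^{2}$ is closed in $L^{2}$ because an $L^{2}$-limit of $\Gamma _{\alpha }$-null functions is again $\Gamma _{\alpha }$-null (interior elliptic estimates). The step I expect to be the real obstacle is showing $\overline{\Gamma }_{\alpha }\left( W_{0}^{2,1}\right) $ itself is closed — this is needed since the proposition is stated without a closure bar. For this I would establish a Poincaré-type estimate $\Vert u\Vert _{W^{2,1}}\leq C\Vert \overline{\Gamma }_{\alpha }u\Vert _{L^{2}}$ on $W_{0}^{2,1}$: by the Borel-Pompeiu formula for $\overline{\Gamma }_{\alpha }$, a zero-trace $u$ satisfies $u=T_{\Omega }\left( \overline{\Gamma }_{\alpha }u\right) $ with $T_{\Omega }$ the Teodorescu transform built from $\overline{\Gamma }_{\alpha }$, and then the estimate follows from the mapping property $T_{\Omega }:W^{2,0}\rightarrow W^{2,1}$ recorded above; hence $\overline{\Gamma }_{\alpha }$ is bounded below on $W_{0}^{2,1}$ and has closed range. (The same Borel-Pompeiu/Cauchy formula also shows $\overline{\Gamma }_{\alpha }$ is injective on $W_{0}^{2,1}$, since a $\gamma -$regular function with zero trace is the null function.) With both summands closed and orthogonal, their sum is closed; being dense, it is all of $L^{2}$, so $\left( B_{\alpha }^{2}\left( \Omega ,Cl_{n}\right) \right) ^{\bot }=\overline{\Gamma }_{\alpha }\left( W_{0}^{2,1}\left( \Omega ,Cl_{n}\right) \right) $. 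The remaining care-points are the precise form of the Green identity on a spherical domain and checking that the overline conventions make $\overline{\Gamma }_{\alpha }$ genuinely the adjoint of $\Gamma _{\alpha }$ (up to sign, and conjugation of $\alpha $ if $\alpha $ is non-real).
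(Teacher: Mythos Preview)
Your argument is correct, but the paper takes a different and much shorter route. Rather than proving orthogonality, density, and closedness separately, the paper simply writes down the two projections explicitly using the inverse of the Dirichlet Laplacian: with $\Delta_{\alpha,0}^{-1}:L^{2}\to W_{0}^{2,2}$ the solution operator for the zero-Dirichlet problem, one sets $Qf=\overline{\Gamma}_{\alpha}\bigl(\Delta_{\alpha,0}^{-1}\Gamma_{\alpha}f\bigr)$ and $Pf=f-Qf$ (the printed version has $\Gamma_{\alpha}$ in place of $\overline{\Gamma}_{\alpha}$, evidently a typo), and checks directly that $\Gamma_{\alpha}Pf=0$ and $Qf\in\overline{\Gamma}_{\alpha}(W_{0}^{2,1})$. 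Trivial intersection is handled by observing that $f\in B_{\alpha}^{2}\cap\overline{\Gamma}_{\alpha}(W_{0}^{2,1})$ forces $\Delta_{\alpha,0}g=0$ for some $g\in W_{0}^{2,1}$, hence $g=0$ by uniqueness for the Dirichlet problem.

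So the paper black-boxes the elliptic theory into the single statement ``$\Delta_{\alpha,0}^{-1}$ exists,'' which implicitly contains the Poincar\'e estimate and closed-range facts you work out by hand. Your approach is more self-contained and makes the functional-analytic structure transparent (in particular you explicitly confront the closedness of $\overline{\Gamma}_{\alpha}(W_{0}^{2,1})$, which the paper never mentions), while the paper's approach has the advantage of producing the projection formulas $P$, $Q$ directly --- formulas that are used repeatedly in the rest of the paper.
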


\begin{proof}
First lets prove that $B_{\alpha }^{2}\left( \Omega ,Cl_{n}\right) \cap 
\overline{\Gamma }_{\alpha }\left( W_{0}^{2,1}\left( \Omega ,Cl_{n}\right)
\right) $ is $\{0\}$, the singleton with only the zero function as the
element.

Indeed, for $f\in B_{\alpha }^{2}\left( \Omega ,Cl_{n}\right) \cap \overline{%
\Gamma }_{\alpha }\left( W_{0}^{2,1}\left( \Omega ,Cl_{n}\right) \right) $,
we have $\Gamma _{\alpha }f=0$ on $\Omega $ and $f=\overline{\Gamma }%
_{\alpha }g$, for $g\in W_{0}^{2,1}\left( \Omega ,Cl_{n}\right) $.

Then $\Gamma _{\alpha }f=\Delta _{\alpha ,0}g=0\Rightarrow g\equiv 0$ on $%
\Omega $. Therefore $f\equiv 0$ on $\Omega $.

\ \ 

Also for $f\epsilon L^{2}\left( \Omega ,Cl_{n}\right) $ we have $f=Pf+Qf$
with $Pf=f-\Gamma _{\alpha }\left( \Delta _{\alpha ,0}^{-1}\Gamma _{\alpha
}f\right) $ and $Qf=\Gamma _{\alpha }\left( \Delta _{\alpha ,0}^{-1}\Gamma
_{\alpha }f\right) $ with $Pf\in B_{\alpha }^{2}\left( \Omega ,Cl_{n}\right) 
$ and $Qf\in \overline{\Gamma }_{\alpha }\left( W_{0}^{2,1}\left( \Omega
,Cl_{n}\right) \right) $,

where $P$ is the Bergman projection and $Q$ is its orthogonal complement.
\end{proof}

\bigskip\ \ \ \ 

As usual, the two orthogonal projections we use in the proof of the above
orthogonality relations are

\begin{equation*}
P:L^{2}\left( \Omega ,Cl_{n}\right) \rightarrow B_{\alpha }^{2}\left( \Omega
,Cl_{n}\right)
\end{equation*}%
which is the Bergman projection and 
\begin{equation*}
Q:L^{2}\left( \Omega ,Cl_{n}\right) \rightarrow \overline{\Gamma }_{\alpha
}\left( W_{0}^{2,1}\left( \Omega ,Cl_{n}\right) \right)
\end{equation*}%
is its orthogonal complement with

$Q=I-P$, and 
\begin{equation*}
PQ=0=QP,P^{2}=P,Q^{2}=Q.
\end{equation*}

\begin{proposition}
For $\phi \in B_{\alpha }^{2}\left( \Omega ,Cl_{n}\right) $ and $\psi \in
\left( B_{\alpha }^{2}\left( \Omega ,Cl_{n}\right) \right) ^{\bot }$, the
squared norm defined by $\mid \mid \mid \bullet \mid \mid \mid :=\parallel
\bullet \parallel _{L^{2}\left( \Omega ,Cl_{n}\right) }^{2}$do satisfy the
following equalities: $\forall n\in 
\mathbb{N}
,$

(a)$\qquad \parallel \phi +\psi \parallel _{L^{2}\left( \Omega
,Cl_{n}\right) }^{n}=\left( \parallel \phi \parallel _{L^{2}\left( \Omega
,Cl_{n}\right) }^{2}+\parallel \psi \parallel _{L^{2}\left( \Omega
,Cl_{n}\right) }^{2}\right) ^{\frac{n}{2}}$

(b)$\qquad \mid \mid \mid \phi +\psi \mid \mid \mid ^{n}=\left( \mid \mid
\mid \phi \mid \mid \mid +\mid \mid \mid \psi \mid \mid \mid \right) ^{n}$
\end{proposition}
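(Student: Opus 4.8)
The plan is to reduce both identities to the Pythagorean theorem in the Hilbert space $L^{2}\left( \Omega ,Cl_{n}\right) $. The previous proposition gives the orthogonal decomposition $L^{2}\left( \Omega ,Cl_{n}\right) =B_{\alpha }^{2}\left( \Omega ,Cl_{n}\right) \oplus \left( B_{\alpha }^{2}\left( \Omega ,Cl_{n}\right) \right) ^{\bot }$ with respect to the inner product (\ref{inp}), so for $\phi \in B_{\alpha }^{2}\left( \Omega ,Cl_{n}\right) $ and $\psi \in \left( B_{\alpha }^{2}\left( \Omega ,Cl_{n}\right) \right) ^{\bot }$ we have $\left\langle \phi ,\psi \right\rangle _{\Omega }=0$; in particular its scalar part vanishes, and by the conjugate symmetry of (\ref{inp}) the scalar part of $\left\langle \psi ,\phi \right\rangle _{\Omega }$ vanishes as well. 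First I would expand $\parallel \phi +\psi \parallel _{L^{2}\left( \Omega ,Cl_{n}\right) }^{2}=\left[ \left\langle \phi +\psi ,\phi +\psi \right\rangle _{\Omega }\right] _{0}$, distribute over the four terms, and discard the two cross terms by this orthogonality, obtaining the Pythagorean identity $\parallel \phi +\psi \parallel _{L^{2}\left( \Omega ,Cl_{n}\right) }^{2}=\parallel \phi \parallel _{L^{2}\left( \Omega ,Cl_{n}\right) }^{2}+\parallel \psi \parallel _{L^{2}\left( \Omega ,Cl_{n}\right) }^{2}$.

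For part (a), since all three quantities are non-negative real numbers, I raise this identity to the power $\tfrac{n}{2}$: writing $\parallel \phi +\psi \parallel _{L^{2}\left( \Omega ,Cl_{n}\right) }^{n}=\left( \parallel \phi +\psi \parallel _{L^{2}\left( \Omega ,Cl_{n}\right) }^{2}\right) ^{n/2}$ and substituting the Pythagorean identity yields exactly $\left( \parallel \phi \parallel _{L^{2}\left( \Omega ,Cl_{n}\right) }^{2}+\parallel \psi \parallel _{L^{2}\left( \Omega ,Cl_{n}\right) }^{2}\right) ^{n/2}$. For odd $n$ the exponent $\tfrac{n}{2}$ is a half-integer, but this is harmless because we are raising a non-negative real. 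For part (b), note that by the very definition $\mid \mid \mid \bullet \mid \mid \mid :=\parallel \bullet \parallel _{L^{2}\left( \Omega ,Cl_{n}\right) }^{2}$, the Pythagorean identity just derived reads $\mid \mid \mid \phi +\psi \mid \mid \mid \ =\ \mid \mid \mid \phi \mid \mid \mid +\mid \mid \mid \psi \mid \mid \mid $; raising both sides of this scalar equation to the $n$-th power gives $\mid \mid \mid \phi +\psi \mid \mid \mid ^{n}=\left( \mid \mid \mid \phi \mid \mid \mid +\mid \mid \mid \psi \mid \mid \mid \right) ^{n}$, which is (b). The case $n=0$, if included in $\mathbb{N}$, is the trivial identity $1=1$.

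The only point that needs a moment of care is the behavior of the Clifford-valued inner product: one must confirm that the orthogonality furnished by the preceding proposition is strong enough to kill the cross terms in the expansion of $\parallel \phi +\psi \parallel _{L^{2}\left( \Omega ,Cl_{n}\right) }^{2}$, i.e. that $\left[ \left\langle \phi ,\psi \right\rangle _{\Omega }+\left\langle \psi ,\phi \right\rangle _{\Omega }\right] _{0}=0$. This follows since $\left\langle \phi ,\psi \right\rangle _{\Omega }=0$ as a Clifford element, or, if one only wishes to invoke scalar-part orthogonality, directly from the symmetry $\left[ \overline{ab}\right] _{0}=\left[ ba\right] _{0}$ applied to $\int_{\Omega }\overline{\psi }\phi \,d\Omega $ versus $\int_{\Omega }\overline{\phi }\psi \,d\Omega $. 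Everything else is elementary arithmetic, so I do not anticipate any real obstacle beyond bookkeeping of the scalar part.
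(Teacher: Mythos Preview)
Your proposal is correct and follows essentially the same approach as the paper: both arguments first establish the Pythagorean identity $\parallel \phi +\psi \parallel _{L^{2}}^{2}=\parallel \phi \parallel _{L^{2}}^{2}+\parallel \psi \parallel _{L^{2}}^{2}$ from the orthogonality $\left\langle \phi ,\psi \right\rangle _{\Omega }=0=\left\langle \psi ,\phi \right\rangle _{\Omega }$, and then deduce the general statements for all $n$. The only cosmetic difference is that the paper carries out the passage to arbitrary $n$ by an explicit induction on $n$ (writing $\parallel \phi +\psi \parallel ^{k+1}=\parallel \phi +\psi \parallel ^{k}\cdot \parallel \phi +\psi \parallel $ and invoking the hypothesis), whereas you simply raise the Pythagorean identity to the power $n/2$ (respectively $n$) in one step; your version is slightly more direct but the mathematical content is identical.
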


\begin{proof}
Here the proof can be done using induction on $n:$

Since $\phi \in \left\langle \psi \right\rangle ^{\perp },$ the orthogonal
space of the space $\left\langle \psi \right\rangle $ spanned by $\psi $, we
have $\dint\limits_{\Omega }\overline{\phi }\psi d\Omega
=0=\dint\limits_{\Omega }\overline{\psi }\phi d\Omega $ which implies, 
\begin{equation*}
\parallel \phi +\psi \parallel _{L^{2}\left( \Omega ,Cl_{n}\right)
}^{2}=\left\langle \phi +\psi ,\phi +\psi \right\rangle _{\Omega }
\end{equation*}

\begin{equation*}
=\dint\limits_{\Omega }\overline{\left( \phi +\psi \right) }\left( \phi
+\psi \right) d\Omega =\dint\limits_{\Omega }\overline{\phi }\phi d\Omega
+\dint\limits_{\Omega }\overline{\psi }\psi d\Omega
\end{equation*}

\begin{equation*}
=\parallel \phi \parallel _{L^{2}\left( \Omega ,Cl_{n}\right)
}^{2}+\parallel \psi \parallel _{L^{2}\left( \Omega ,Cl_{n}\right) }^{2}
\end{equation*}%
That is 
\begin{equation*}
\parallel \phi +\psi \parallel _{L^{2}\left( \Omega ,Cl_{n}\right) }=\left(
\parallel \phi \parallel _{L^{2}\left( \Omega ,Cl_{n}\right) }^{2}+\parallel
\psi \parallel _{L^{2}\left( \Omega ,Cl_{n}\right) }^{2}\right) ^{\frac{1}{2}%
}
\end{equation*}%
Therfore the statment is valid for $n=1.$ We assume it is true for $k$, that
is 
\begin{equation*}
\parallel \phi +\psi \parallel _{L^{2}\left( \Omega ,Cl_{n}\right)
}^{k}=\left( \parallel \phi \parallel _{L^{2}\left( \Omega ,Cl_{n}\right)
}^{2}+\parallel \psi \parallel _{L^{2}\left( \Omega ,Cl_{n}\right)
}^{2}\right) ^{\frac{k}{2}}
\end{equation*}%
Then 
\begin{equation*}
\parallel \phi +\psi \parallel _{L^{2}\left( \Omega ,Cl_{n}\right)
}^{k+1}=\parallel \phi +\psi \parallel _{L^{2}\left( \Omega ,Cl_{n}\right)
}^{k}\parallel \phi +\psi \parallel _{L^{2}\left( \Omega ,Cl_{n}\right) }^{1}
\end{equation*}

\begin{eqnarray*}
&=&\left( \parallel \phi \parallel _{L^{2}\left( \Omega ,Cl_{n}\right)
}^{2}+\parallel \psi \parallel _{L^{2}\left( \Omega ,Cl_{n}\right)
}^{2}\right) ^{\frac{k}{2}}\left( \parallel \phi \parallel _{L^{2}\left(
\Omega ,Cl_{n}\right) }^{2}+\parallel \psi \parallel _{L^{2}\left( \Omega
,Cl_{n}\right) }^{2}\right) ^{\frac{1}{2}} \\
&=&\left( \parallel \phi \parallel _{L^{2}\left( \Omega ,Cl_{n}\right)
}^{2}+\parallel \psi \parallel _{L^{2}\left( \Omega ,Cl_{n}\right)
}^{2}\right) ^{\frac{k+1}{2}}
\end{eqnarray*}%
\ which shows the validity of the statement for $k+1$ and that proves the
statement for $\forall n\in 
\mathbb{N}
,$ and that proves \textit{(a)}.

\ \ \ 

\textit{(b)} follows easily: 
\begin{eqnarray*}
&\mid &\mid \mid \phi +\psi \mid \mid \mid ^{n}=\left( \parallel \phi +\psi
\parallel _{L^{2}\left( \Omega ,Cl_{n}\right) }^{2}\right) ^{n} \\
&=&\left( \parallel \phi \parallel _{L^{2}\left( \Omega ,Cl_{n}\right)
}^{2}+\parallel \psi \parallel _{L^{2}\left( \Omega ,Cl_{n}\right)
}^{2}\right) ^{n} \\
&=&\left( \mid \mid \mid \phi \mid \mid \mid +\mid \mid \mid \psi \mid \mid
\mid \right) ^{n}
\end{eqnarray*}
\end{proof}

In \cite{lakewryan}, the authors\ \ have a decomposition result for Sobolev
spaces 
\begin{equation*}
W^{p,k-1}\left( \Omega ,Cl_{n}\right) =B^{p,k}\left( \Omega ,Cl_{n}\right)
\dotplus D^{k}\left( \overset{0}{W}^{p,2k-1}\left( \Omega ,Cl_{n}\right)
\right)
\end{equation*}%
where $\dotplus $ is a direct sum and when $p=2$ it is an orthogonal sum
with respect to the inner product (\ref{inp}), with corresponding orthogonal
projections

\bigskip

\begin{equation*}
P^{\left( k\right) }:W^{2,k-1}\left( \Omega ,Cl_{n}\right) \rightarrow
B^{2,k}\left( \Omega ,Cl_{n}\right)
\end{equation*}%
and 
\begin{equation*}
Q^{\left( k\right) }:W^{2,k-1}\left( \Omega ,Cl_{n}\right) \rightarrow
D^{k}\left( \overset{0}{W}^{2,2k-1}\left( \Omega ,Cl_{n}\right) \right)
\end{equation*}

with $Q^{\left( k\right) }=I-P^{\left( k\right) }$\ such that 
\begin{equation*}
P^{\left( k\right) }Q^{\left( k\right) }=Q^{\left( k\right) }P^{\left(
k\right) }=0,\left( P^{\left( k\right) }\right) ^{2}=\left( P^{\left(
k\right) }\right) ,\left( Q^{\left( k\right) }\right) ^{2}=Q^{\left(
k\right) }
\end{equation*}%
and \ $D^{k}=\left( \sum_{i=0}^{n}e_{i}\frac{\partial }{\partial x_{i}}%
\right) ^{k}$, is the $k^{th}$ iterate of the Dirac operator.

\ \ \ 

\begin{proposition}
For $f\in L^{2}\left( \Omega ,Cl_{n}\right) $, and $P$, the Bergman
projection, we have 
\begin{equation*}
\left\langle Pf,f\right\rangle _{\Omega }=\left\langle Pf,Pf\right\rangle
_{\Omega }
\end{equation*}
\end{proposition}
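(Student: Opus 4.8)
The plan is to read the statement off the orthogonal decomposition $L^{2}\left( \Omega ,Cl_{n}\right) =B_{\alpha }^{2}\left( \Omega ,Cl_{n}\right) \dotplus \left( B_{\alpha }^{2}\left( \Omega ,Cl_{n}\right) \right) ^{\bot }$ established above, using that $P$ is by definition the projection onto the first summand along the second. First I would write $f=Pf+Qf$ with $Qf=(I-P)f\in \left( B_{\alpha }^{2}\left( \Omega ,Cl_{n}\right) \right) ^{\bot }=\overline{\Gamma }_{\alpha }\left( W_{0}^{2,1}\left( \Omega ,Cl_{n}\right) \right) $ and $Pf\in B_{\alpha }^{2}\left( \Omega ,Cl_{n}\right) $. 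Using only right-additivity of the Clifford-valued inner product (\ref{inp}) in its second argument, this gives
\[
\left\langle Pf,f\right\rangle _{\Omega }=\left\langle Pf,Pf\right\rangle _{\Omega }+\left\langle Pf,Qf\right\rangle _{\Omega }.
\]

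Next I would show the cross term $\left\langle Pf,Qf\right\rangle _{\Omega }$ vanishes. Since $Pf$ lies in the Bergman space and $Qf$ lies in its orthogonal complement with respect to (\ref{inp}), the defining property of $\left( B_{\alpha }^{2}\left( \Omega ,Cl_{n}\right) \right) ^{\bot }$ yields $\left\langle Pf,Qf\right\rangle _{\Omega }=0$ immediately. Alternatively, and more explicitly, one has $\Gamma _{\alpha }\left( Pf\right) =0$ on $\Omega $ and $Qf=\overline{\Gamma }_{\alpha }g$ for some $g\in W_{0}^{2,1}\left( \Omega ,Cl_{n}\right) $; integrating by parts over $\Omega $ and using that $g$ has vanishing trace on $\partial \Omega $ makes the boundary contribution disappear and leaves $\int_{\Omega }\overline{Pf}\,\overline{\Gamma }_{\alpha }g\,d\Omega =\int_{\Omega }\overline{\Gamma _{\alpha }\left( Pf\right) }\,g\,d\Omega =0$, exactly the computation carried out in the proof of the orthogonality proposition. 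Substituting this back gives $\left\langle Pf,f\right\rangle _{\Omega }=\left\langle Pf,Pf\right\rangle _{\Omega }$.

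The only delicate point — and the one I expect to be the main obstacle — is bookkeeping with the non-symmetric, noncommutative inner product $\left\langle f,g\right\rangle _{\Omega }=\int_{\Omega }\overline{f}g\,d\Omega $: one must make sure the orthogonality relation is invoked in the precise slot order $\left\langle Pf,Qf\right\rangle _{\Omega }$ (Bergman element on the left, complement element on the right) and that the initial splitting uses only additivity in the second variable, which (\ref{inp}) does possess. Since the orthogonality proposition above was in fact proved with both orders of the inner product annihilating, no genuine difficulty arises, and no analytic estimate is required; the result is essentially a restatement of the idempotency $P^{2}=P$ of an orthogonal projection.
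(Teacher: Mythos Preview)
Your proposal is correct and follows essentially the same route as the paper: decompose $f=Pf+Qf$, expand $\langle Pf,f\rangle_\Omega$ by additivity in the second slot, and kill the cross term $\langle Pf,Qf\rangle_\Omega$ via the orthogonality of $B_\alpha^2(\Omega,Cl_n)$ and $\overline{\Gamma}_\alpha(W_0^{2,1}(\Omega,Cl_n))$. If anything, your write-up is cleaner about the slot-ordering issue for the Clifford-valued inner product than the paper's own argument.
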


\begin{proof}
Let $f\in L^{2}\left( \Omega ,Cl_{n}\right) $. Then 
\begin{equation*}
f=Pf+Qf
\end{equation*}%
with $Pf\in B_{\alpha }^{2}\left( \Omega ,Cl_{n}\right) $ and $Qf\in 
\overline{\Gamma }_{\alpha }\left( W_{0}^{2,1}\left( \Omega ,Cl_{n}\right)
\right) $.

Therefore,%
\begin{equation*}
\left\langle Pf,Qf\right\rangle _{\Omega }=\left\langle Pf,Pf\right\rangle
_{\Omega }=\dint\limits_{\Omega }\overline{Pf}Qfd\Omega =0
\end{equation*}

which implies 
\begin{equation*}
\dint\limits_{\Omega }\overline{Pf}\left( I-Q\right) fd\Omega
=\dint\limits_{\Omega }\left( \overline{Pf}f-\overline{Pf}Qf\right) d\Omega
=0.
\end{equation*}%
That is, 
\begin{equation*}
\dint\limits_{\Omega }\overline{Pf}fd\Omega =\dint\limits_{\Omega }\overline{%
Pf}Qfd\Omega .
\end{equation*}
Therefore,%
\begin{equation*}
\left\langle Pf,f\right\rangle _{\Omega }=\left\langle Pf,Pf\right\rangle
_{\Omega }.
\end{equation*}
\end{proof}

\begin{proposition}
For $f\in L^{2}\left( \Omega ,Cl_{n}\right) $, $\exists g\in
W_{0}^{2,1}\left( \Omega ,Cl_{n}\right) $ such that 
\begin{equation*}
\Gamma _{\alpha }f=\Gamma _{\alpha }\overline{\Gamma }_{\alpha }g.
\end{equation*}
\end{proposition}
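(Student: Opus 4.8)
The plan is to read the identity straight off the orthogonal decomposition
\[
L^{2}\left( \Omega ,Cl_{n}\right) =B_{\alpha }^{2}\left( \Omega ,Cl_{n}\right) \dotplus \overline{\Gamma }_{\alpha }\left( W_{0}^{2,1}\left( \Omega ,Cl_{n}\right) \right)
\]
established above, together with the identification $B_{\alpha }^{2}\left( \Omega ,Cl_{n}\right) =\ker \Gamma _{\alpha }$. First I would split $f=Pf+Qf$, where $P$ is the Bergman projection and $Q=I-P$ its orthogonal complement, so that $Pf\in B_{\alpha }^{2}\left( \Omega ,Cl_{n}\right) $ and $Qf\in \overline{\Gamma }_{\alpha }\left( W_{0}^{2,1}\left( \Omega ,Cl_{n}\right) \right) $.

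Since $Qf$ lies in the range of $\overline{\Gamma }_{\alpha }$ restricted to $W_{0}^{2,1}\left( \Omega ,Cl_{n}\right) $, I can pick $g\in W_{0}^{2,1}\left( \Omega ,Cl_{n}\right) $ with $\overline{\Gamma }_{\alpha }g=Qf$; concretely one may take $g=\Delta _{\alpha ,0}^{-1}\Gamma _{\alpha }f$ as in the proof of the orthogonality proposition, and the mapping properties of the Dirichlet inverse $\Delta _{\alpha ,0}^{-1}$ guarantee $g\in W_{0}^{2,1}\left( \Omega ,Cl_{n}\right) $ because $\Gamma _{\alpha }f$ sits, distributionally, one Sobolev order below $L^{2}$. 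Then, using $Pf\in \ker \Gamma _{\alpha }$, I apply $\Gamma _{\alpha }$ to $f=Pf+Qf$ and get
\[
\Gamma _{\alpha }f=\Gamma _{\alpha }\left( Pf\right) +\Gamma _{\alpha }\left( Qf\right) =\Gamma _{\alpha }\left( \overline{\Gamma }_{\alpha }g\right) =\Gamma _{\alpha }\overline{\Gamma }_{\alpha }g,
\]
which is exactly the asserted identity.

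The bookkeeping here is routine; the genuine content — and hence the step I expect to be the main obstacle — is the legitimacy of the ingredients at the $L^{2}$ level: that $\Gamma _{\alpha }$ acts on $L^{2}\left( \Omega ,Cl_{n}\right) $ in the distributional sense, that the projections $P$, $Q$ and the decomposition above are actually in force, and that the second order Dirichlet problem for $\Delta _{\alpha ,0}=\Gamma _{\alpha }\overline{\Gamma }_{\alpha }$ is uniquely solvable with the solution in $W_{0}^{2,1}\left( \Omega ,Cl_{n}\right) $. All of these are supplied by the preceding propositions, so once they are invoked the statement follows in one line from $f=Pf+Qf$ and $\Gamma _{\alpha }Pf=0$.
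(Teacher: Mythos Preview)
Your argument is correct and is essentially identical to the paper's own proof: decompose $f=Pf+Qf$, write $Qf=\overline{\Gamma }_{\alpha }g$ for some $g\in W_{0}^{2,1}\left( \Omega ,Cl_{n}\right) $, and apply $\Gamma _{\alpha }$ to both sides using $\Gamma _{\alpha }Pf=0$. The extra remarks you add about the concrete choice $g=\Delta _{\alpha ,0}^{-1}\Gamma _{\alpha }f$ and the distributional interpretation are consistent with, and slightly more explicit than, what the paper records.
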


\begin{proof}
Let $f\in L^{2}\left( \Omega ,Cl_{n}\right) $. Then%
\begin{equation*}
f=Pf+Qf
\end{equation*}%
with $Pf\in B^{2}\left( \Omega ,Cl_{n}\right) $ and $Qf$ $\in \overline{%
\Gamma }_{\alpha }\left( W_{0}^{2,1}\left( \Omega ,Cl_{n}\right) \right) $.

Therefore there exists $g\in W_{0}^{2,1}\left( \Omega ,Cl_{n}\right) $ such
that $Qf=\overline{\Gamma }_{\alpha }g$. Then applying $\Gamma _{\alpha }$
on both sides of the equation 
\begin{equation*}
f=Pf+Qf
\end{equation*}%
we have the required result.
\end{proof}

\ \ \ \ 

\begin{remark}
In the case where the Laplacian is factored as%
\begin{equation*}
\Delta =D\overline{D}=\overline{D}D
\end{equation*}%
we could have that $Df=\Delta g$, but in the case of spherical Laplacian the
factorization is a bit different.

For $\Gamma _{\alpha }$ the spherical Dirac operator, the Spherical
Laplacian is factored as%
\begin{equation*}
\Delta _{\alpha }=\Gamma _{\alpha }\Gamma _{\beta }=\Gamma _{\beta }\Gamma
_{\alpha }
\end{equation*}%
where $\alpha +\beta =-n+1$.
\end{remark}

\section{\textbf{Results on the Spherical }$\protect\pi _{\protect\alpha %
,S^{n-1}}-$\textbf{Operator}}

As is done in the case of defining the $\pi $-operator over general domains
in Euclidean spaces, we define $\pi _{\alpha ,S^{n-1}}$\ over domains in the
unit sphere as follows.

\begin{definition}
For $f\in C^{1}\left( \Omega \rightarrow Cl_{n}\right) $ , define 
\begin{equation*}
\pi _{\alpha ,S^{n-1}}(f):=\overline{\Gamma }_{\alpha }T_{\Omega }(f).
\end{equation*}
\end{definition}

In the scale of Sobolev spaces, $\pi _{\alpha ,S^{n-1}}$ is an operator from 
$W^{p,k}\left( \Omega ,Cl_{n}\right) \rightarrow W^{p,k}\left( \Omega
,Cl_{n}\right) $, for $1<p<\infty ,k=0,1,2,...$.

\ \ 

Over domains $\Omega $ in Euclidean spaces $%
\mathbb{R}
^{n}\left( \text{ or }%
\mathbb{C}
^{n}\right) $, this operator \ has the following integral representation:

For $n=1$: 
\begin{equation*}
\pi _{\Omega }f(w)=\int_{\Omega }\Psi (z-w)f(z)dz
\end{equation*}%
where%
\begin{equation*}
\Psi (z-w)=\frac{-1}{\pi \left( z-w\right) ^{2}}
\end{equation*}

and for $n>1$, we have a representation given by :

\begin{equation*}
\pi _{\Omega }f(x)=\int_{\Omega }-\frac{n+\left( n+2\right) \frac{\overline{%
\left( y-x\right) }^{2}}{\mid y-x\mid ^{2}}}{\omega \mid y-x\mid ^{n+2}}%
f\left( y\right) d\Omega _{y}+\frac{-n}{n+2}f\left( x\right)
\end{equation*}

which in both cases the $\pi _{\Omega }-$operator is a hyper singular
integral operator of C-Z type.

\ \ \ 

Note also that from the above general formula of the $\pi _{\Omega }-$
operator, taking $n=0$, we have the well known $\pi _{\Omega }-$operator
(given above) in the usual $1D$\ complex space \ $%
\mathbb{C}
^{1}\simeq Cl_{0,1},$ where $\sqrt{-1}=i\sim e_{1}.$

\ \ 

Also in a generalized setting, Dejenie A. Lakew and John Ryan in \cite%
{lakryan} study the $\pi -$operator over real, compact $\left( n+1\right) -$
manifolds in $%
\mathbb{C}
^{n+1}$ which are called Domain Manifolds and produced an integral
representation of $\pi $ over such manifolds\ given by: 
\begin{equation*}
\pi _{\Omega }f(w)=\dint\limits_{\Omega }\overline{D}_{\gamma }\Psi ^{\Gamma
}\left( w-v\right) f\left( v\right) d\Omega _{v}+\frac{-n+1}{n+1}f\left(
w\right)
\end{equation*}%
where $D_{\gamma }$ is a nonhomogeneous Dirac like operator defined by $%
D_{\gamma }=\dsum\limits_{j=0}^{n}e_{j}\left( \frac{\partial }{\partial x_{j}%
}-\gamma _{j}\right) $, $\Psi ^{\Gamma }$ is a fundamental solution for $%
D_{\gamma }$ \ and $\Omega $ is a domain manifold in $%
\mathbb{C}
^{n+1}.$

\begin{definition}
We define the Clifford conjugate of $\pi _{\alpha ,S^{n-1}}$ as 
\begin{equation*}
\overline{\pi }_{\alpha ,S^{n-1}}:=\Gamma _{\alpha }\overline{T}_{\Omega }
\end{equation*}%
where 
\begin{equation*}
\overline{T}_{\Omega }\left( f\right) (x)=\int_{\Omega }\overline{\Psi
_{\alpha }^{n}}\left( y,x\right) f(y)d\Omega _{y}\text{.}
\end{equation*}
\end{definition}

\begin{proposition}
(Classical Analogous Results:\cite{gurkah},\cite{gurmal})

On the Sobolev space $W^{p,k}\left( \Omega ,Cl_{n}\right) ,$ where $%
1<p<\infty $, $k=0,1,2,...$, we have :

\begin{enumerate}
\item $\qquad \Gamma _{\alpha }\pi _{\alpha ,S^{n-1}}=\overline{\Gamma }%
_{\alpha }$

\item $\qquad \pi _{\alpha ,S^{n-1}}\Gamma _{\alpha }=\overline{\Gamma }%
_{\alpha }\left( I-F_{\partial \Omega }\right) $

\item $\qquad F_{\partial \Omega }\pi _{\alpha ,S^{n-1}}=\pi _{\alpha
,S^{n-1}}-T_{\Omega }\overline{\Gamma }_{\alpha }$

\item $\qquad \Gamma _{\alpha }\pi _{\alpha ,S^{n-1}}-\pi _{\alpha ,S^{n-1}}=%
\overline{\Gamma }_{\alpha }F_{\partial \Omega }$
\end{enumerate}
\end{proposition}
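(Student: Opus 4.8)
The plan is to derive all four identities mechanically from two things we already have: the definition $\pi_{\alpha,S^{n-1}} = \overline{\Gamma}_\alpha T_\Omega$ together with Proposition (the mapping statement) that $\Gamma_\alpha T_\Omega = I$ on the relevant spaces, and the Borel--Pompeiu formula $\chi_\Omega f = F_{\partial\Omega} f + T_\Omega \Gamma_\alpha f$. The point is that $\Gamma_\alpha$ and $\overline{\Gamma}_\alpha$ commute (both are polynomials in the same first-order operator up to the scalar shift, so their commutator vanishes — this is implicit in the Remark's factorization $\Delta_\alpha = \Gamma_\alpha\Gamma_\beta = \Gamma_\beta\Gamma_\alpha$), and everything reduces to bookkeeping with $\Gamma_\alpha T_\Omega = I$ and $T_\Omega\Gamma_\alpha = I - F_{\partial\Omega}$ (the latter being Borel--Pompeiu rearranged, valid on $\Omega$ so that $\chi_\Omega = I$).

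For (1): apply $\Gamma_\alpha$ on the left of the definition, $\Gamma_\alpha \pi_{\alpha,S^{n-1}} = \Gamma_\alpha \overline{\Gamma}_\alpha T_\Omega = \overline{\Gamma}_\alpha \Gamma_\alpha T_\Omega = \overline{\Gamma}_\alpha$, using commutativity and the right-inverse property. For (2): compute $\pi_{\alpha,S^{n-1}}\Gamma_\alpha = \overline{\Gamma}_\alpha T_\Omega \Gamma_\alpha = \overline{\Gamma}_\alpha (I - F_{\partial\Omega})$ by Borel--Pompeiu. For (4): subtract the definition-identity from (1): since $\pi_{\alpha,S^{n-1}} = \overline{\Gamma}_\alpha T_\Omega$, we get $\Gamma_\alpha \pi_{\alpha,S^{n-1}} - \pi_{\alpha,S^{n-1}} = \overline{\Gamma}_\alpha - \overline{\Gamma}_\alpha T_\Omega = \overline{\Gamma}_\alpha (I - T_\Omega)$, and here one invokes a Borel--Pompeiu-type identity $I - T_\Omega \Gamma_\alpha = F_{\partial\Omega}$ — wait, the correct move is $\overline{\Gamma}_\alpha(I - T_\Omega) $; to match the claimed right-hand side $\overline{\Gamma}_\alpha F_{\partial\Omega}$ one needs $I - T_\Omega = F_{\partial\Omega}$ on the image of whatever $\pi$ produces, which follows because $\pi_{\alpha,S^{n-1}}$ lands in functions to which Borel--Pompeiu applies with $\Gamma_\alpha$ already extracted; more cleanly, write $\overline{\Gamma}_\alpha(I-T_\Omega)$ and note $T_\Omega = T_\Omega\Gamma_\alpha \cdot(\text{something})$ — so the honest derivation of (4) should instead start from (2) or from applying Borel--Pompeiu directly to $T_\Omega f$.

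For (3): start from Borel--Pompeiu applied to $\pi_{\alpha,S^{n-1}} f = \overline{\Gamma}_\alpha T_\Omega f$, namely $\pi_{\alpha,S^{n-1}} f = F_{\partial\Omega}\pi_{\alpha,S^{n-1}} f + T_\Omega \Gamma_\alpha \pi_{\alpha,S^{n-1}} f$, then substitute (1) to get $\pi_{\alpha,S^{n-1}} f = F_{\partial\Omega}\pi_{\alpha,S^{n-1}} f + T_\Omega \overline{\Gamma}_\alpha f$, which rearranges to exactly $F_{\partial\Omega}\pi_{\alpha,S^{n-1}} = \pi_{\alpha,S^{n-1}} - T_\Omega\overline{\Gamma}_\alpha$. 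Then (4) is cleanest obtained by combining (1) and (3): $\Gamma_\alpha\pi_{\alpha,S^{n-1}} - \pi_{\alpha,S^{n-1}} = \overline{\Gamma}_\alpha - \pi_{\alpha,S^{n-1}}$, and from (3), $\pi_{\alpha,S^{n-1}} - F_{\partial\Omega}\pi_{\alpha,S^{n-1}} = T_\Omega\overline{\Gamma}_\alpha$, so $\overline{\Gamma}_\alpha - \pi_{\alpha,S^{n-1}} = \overline{\Gamma}_\alpha - T_\Omega\overline{\Gamma}_\alpha - F_{\partial\Omega}\pi_{\alpha,S^{n-1}}$; applying $I = T_\Omega\Gamma_\alpha + F_{\partial\Omega}$ to the leading $\overline{\Gamma}_\alpha$ and using commutativity collapses this to $\overline{\Gamma}_\alpha F_{\partial\Omega}$ after cancellation of the $T_\Omega\overline{\Gamma}_\alpha$ terms. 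The main obstacle is not any one computation but justifying that $\Gamma_\alpha$ and $\overline{\Gamma}_\alpha$ genuinely commute as operators (the wedge-product structure of $\Gamma_\omega$ makes this less obvious than in the flat case) and that Borel--Pompeiu and the right-inverse property remain valid after one has applied $\overline{\Gamma}_\alpha$, i.e., that $T_\Omega f$ is regular enough — this is handled by the stated mapping properties $T_\Omega : W^{p,k}\to W^{p,k+1}$, so each intermediate function sits in a space where the previously proved identities apply, and by density/continuity one extends from $C^1$ to all of $W^{p,k}$.
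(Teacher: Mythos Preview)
The paper does not supply a proof of this proposition; it is stated with citations to the Euclidean analogues in \cite{gurkah},\cite{gurmal} and then used. So there is nothing to compare your argument against on the paper's side. Your derivations of (1), (2), and (3) are exactly the standard ones and are correct: (1) uses that $\Gamma_\alpha$ and $\overline{\Gamma}_\alpha$ commute (both are $\pm\Gamma_\omega$ plus a scalar) together with $\Gamma_\alpha T_\Omega = I$; (2) is Borel--Pompeiu rewritten as $T_\Omega\Gamma_\alpha = I - F_{\partial\Omega}$ and then hit on the left by $\overline{\Gamma}_\alpha$; (3) is Borel--Pompeiu applied to $\pi_{\alpha,S^{n-1}} f$ combined with (1). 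The regularity bookkeeping you mention (via $T_\Omega:W^{p,k}\to W^{p,k+1}$ and density) is the right way to justify each step.

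Your visible struggle with (4) is not a defect in your reasoning but a symptom of the statement: as printed, (4) would force $\overline{\Gamma}_\alpha(I-T_\Omega)=\overline{\Gamma}_\alpha F_{\partial\Omega}$, i.e.\ $T_\Omega f = T_\Omega\Gamma_\alpha f$ for all $f$ after stripping $\overline{\Gamma}_\alpha$, which is false. The identity that actually follows from (1) and (2) is the commutator relation
\[
\Gamma_\alpha\,\pi_{\alpha,S^{n-1}}-\pi_{\alpha,S^{n-1}}\,\Gamma_\alpha=\overline{\Gamma}_\alpha F_{\partial\Omega},
\]
obtained by subtracting (2) from (1); this is the form appearing in the cited Euclidean references, and (4) as printed is almost certainly a misprint with the trailing $\Gamma_\alpha$ dropped. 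Your roundabout attempt to salvage (4) via (3) and cancellations cannot close because the target identity is simply not valid in general. I would replace your discussion of (4) with the one-line subtraction and a remark flagging the typo.
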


\begin{corollary}
From the above proposition we see that 
\begin{equation*}
\pi _{\alpha ,S^{n-1}}:\overline{B}_{\alpha }^{2}\left( \Omega
,Cl_{n}\right) \rightarrow B_{\alpha }^{2}\left( \Omega ,Cl_{n}\right)
\end{equation*}%
where 
\begin{equation*}
\overline{B}_{\alpha }^{2}\left( \Omega ,Cl_{n}\right) =L^{2}\left( \Omega
,Cl_{n}\right) \cap \ker \overline{\Gamma }_{\alpha }\left( \Omega
,Cl_{n}\right) \text{ \ and \ }B_{\alpha }^{2}\left( \Omega ,Cl_{n}\right)
\end{equation*}%
is the Bergman space $L^{2}\left( \Omega ,Cl_{n}\right) \cap \ker \Gamma
_{\alpha }\left( \Omega ,Cl_{n}\right) $
\end{corollary}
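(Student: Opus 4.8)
The plan is to unwind the two defining conditions for membership in $B_{\alpha}^{2}(\Omega ,Cl_{n})$ and verify each in turn for an element of $\overline{B}_{\alpha}^{2}(\Omega ,Cl_{n})$. By definition $f\in \overline{B}_{\alpha}^{2}(\Omega ,Cl_{n})$ means $f\in L^{2}(\Omega ,Cl_{n})$ together with $\overline{\Gamma }_{\alpha }f=0$, whereas establishing $\pi _{\alpha ,S^{n-1}}f\in B_{\alpha}^{2}(\Omega ,Cl_{n})$ amounts to checking (i) $\pi _{\alpha ,S^{n-1}}f\in L^{2}(\Omega ,Cl_{n})$ and (ii) $\Gamma _{\alpha }\bigl(\pi _{\alpha ,S^{n-1}}f\bigr)=0$ on $\Omega $.

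For (i) I would invoke the Sobolev mapping property of $\pi _{\alpha ,S^{n-1}}$ recorded in Section~4, namely $\pi _{\alpha ,S^{n-1}}:W^{p,k}(\Omega ,Cl_{n})\rightarrow W^{p,k}(\Omega ,Cl_{n})$ for $1<p<\infty $ and $k=0,1,2,\dots$; specializing to $p=2$, $k=0$ gives $\pi _{\alpha ,S^{n-1}}:L^{2}(\Omega ,Cl_{n})\rightarrow L^{2}(\Omega ,Cl_{n})$, so $\pi _{\alpha ,S^{n-1}}f\in L^{2}(\Omega ,Cl_{n})$. For (ii) I would apply identity~(1) of the preceding Proposition, $\Gamma _{\alpha }\pi _{\alpha ,S^{n-1}}=\overline{\Gamma }_{\alpha }$ as operators on $W^{2,0}(\Omega ,Cl_{n})$; evaluating at $f$ yields $\Gamma _{\alpha }\bigl(\pi _{\alpha ,S^{n-1}}f\bigr)=\overline{\Gamma }_{\alpha }f=0$ since $f\in \ker \overline{\Gamma }_{\alpha }$. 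Combining (i) and (ii), $\pi _{\alpha ,S^{n-1}}f\in L^{2}(\Omega ,Cl_{n})\cap \ker \Gamma _{\alpha }=B_{\alpha}^{2}(\Omega ,Cl_{n})$, which is the assertion.

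Two points deserve a word of care. First, because $f$ is only assumed square integrable, the equalities $\overline{\Gamma }_{\alpha }f=0$ and $\Gamma _{\alpha }\pi _{\alpha ,S^{n-1}}f=0$ are initially to be read in the weak sense; one then observes that $\Gamma _{\alpha }$ is of Dirac type, its square being (up to the shift $\alpha +\beta =-n+1$ of the Remark) the spherical Laplacian, hence elliptic, so any $L^{2}$ weak solution of $\Gamma _{\alpha }u=0$ is smooth and a classical solution, placing $\pi _{\alpha ,S^{n-1}}f$ genuinely in the Bergman space as defined. Second, identity~(1) is stated on the whole Sobolev scale $W^{p,k}$ with $k\geq 0$, so it may be used directly at $k=0$ with no extra work; alternatively, if one derives it first on $C^{1}(\Omega ,Cl_{n})\cap C(\overline{\Omega },Cl_{n})$, the density of that class in $L^{2}(\Omega ,Cl_{n})$ together with the $L^{2}$-continuity of $\Gamma _{\alpha }\pi _{\alpha ,S^{n-1}}$ and of $\overline{\Gamma }_{\alpha }$ on the pertinent spaces closes the gap. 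The only substantive inputs are thus the preceding Proposition and the Sobolev mapping property; given those, the corollary is essentially immediate, and the main obstacle is merely keeping the regularity and operator-identity bookkeeping precise rather than proving any new estimate.
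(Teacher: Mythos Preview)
Your argument is correct and is essentially the paper's own approach: the paper gives no explicit proof for this corollary, but the intended reasoning is exactly to apply identity~(1), $\Gamma _{\alpha }\pi _{\alpha ,S^{n-1}}=\overline{\Gamma }_{\alpha }$, together with the Sobolev mapping property of $\pi _{\alpha ,S^{n-1}}$, and indeed a later Proposition in the paper spells out precisely this computation (there on $W^{2,1}$ rather than $L^{2}$). Your extra remarks on weak versus classical solutions and ellipticity are more careful than anything the paper says, but they do not change the route.
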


\begin{corollary}
Also we have 
\begin{equation*}
\pi _{\alpha ,S^{n-1}}:\Gamma _{\alpha }\left( W_{0}^{2,1}\left( \Omega
,Cl_{n}\right) \right) \rightarrow \overline{\Gamma }_{\alpha }\left(
W_{0}^{2,1}\left( \Omega ,Cl_{n}\right) \right) .
\end{equation*}
\end{corollary}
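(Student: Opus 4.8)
The plan is to read this off part (2) of the preceding Proposition (Classical Analogous Results) together with the vanishing of the boundary operator $F_{\partial \Omega }$ on the subspace $W_{0}^{2,1}\left( \Omega ,Cl_{n}\right) $. First I would recall that $W_{0}^{2,1}\left( \Omega ,Cl_{n}\right) $ is the closure of the compactly supported $C^{1}$ Clifford valued functions in the $W^{2,1}$-norm, so every $g\in W_{0}^{2,1}\left( \Omega ,Cl_{n}\right) $ has vanishing trace, $tr_{\partial \Omega }g=0$. Since the boundary integral $F_{\partial \Omega }$ factors through the trace, $F_{\partial \Omega }g=F_{\partial \Omega }\left( tr_{\partial \Omega }g\right) $ (the factorization recorded after the trace diagram in Section 2), and is bounded on the relevant Slobedeckij/Sobolev scale, it follows that $F_{\partial \Omega }g=0$ for all $g\in W_{0}^{2,1}\left( \Omega ,Cl_{n}\right) $.

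Next, take an arbitrary element of the source space, i.e. $f=\Gamma _{\alpha }g$ with $g\in W_{0}^{2,1}\left( \Omega ,Cl_{n}\right) $. Applying part (2) of the Proposition, $\pi _{\alpha ,S^{n-1}}\Gamma _{\alpha }=\overline{\Gamma }_{\alpha }\left( I-F_{\partial \Omega }\right) $, and using $F_{\partial \Omega }g=0$ from the previous step, we obtain
\begin{equation*}
\pi _{\alpha ,S^{n-1}}\left( f\right) =\pi _{\alpha ,S^{n-1}}\Gamma _{\alpha }g=\overline{\Gamma }_{\alpha }\left( I-F_{\partial \Omega }\right) g=\overline{\Gamma }_{\alpha }g\in \overline{\Gamma }_{\alpha }\left( W_{0}^{2,1}\left( \Omega ,Cl_{n}\right) \right) ,
\end{equation*}
which is exactly the claimed mapping property. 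To see that the map is in fact onto (so that, modulo the identifications below, it is a bijection), note that every element of $\overline{\Gamma }_{\alpha }\left( W_{0}^{2,1}\left( \Omega ,Cl_{n}\right) \right) $ is of the form $\overline{\Gamma }_{\alpha }g$ and hence equals $\pi _{\alpha ,S^{n-1}}\left( \Gamma _{\alpha }g\right) $. One could equally well avoid the Proposition and argue directly from Borel--Pompeiu: $g=F_{\partial \Omega }g+T_{\Omega }\Gamma _{\alpha }g=T_{\Omega }\Gamma _{\alpha }g$ since $F_{\partial \Omega }g=0$, so $T_{\Omega }f=g\in W_{0}^{2,1}\left( \Omega ,Cl_{n}\right) $ and therefore $\pi _{\alpha ,S^{n-1}}\left( f\right) =\overline{\Gamma }_{\alpha }T_{\Omega }f=\overline{\Gamma }_{\alpha }g$ lands in the stated space.

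Finally, for the statement to be meaningful one should note that $\Gamma _{\alpha }g$ and $\overline{\Gamma }_{\alpha }g$ are well defined modulo the respective kernels: if $g_{1},g_{2}\in W_{0}^{2,1}\left( \Omega ,Cl_{n}\right) $ with $\Gamma _{\alpha }g_{1}=\Gamma _{\alpha }g_{2}$, then $g_{1}-g_{2}$ is a traceless spherical monogenic function of order $\alpha $, hence identically zero by the Corollary to Borel--Pompeiu, so $\overline{\Gamma }_{\alpha }g_{1}=\overline{\Gamma }_{\alpha }g_{2}$. I expect the only real subtlety here to be this bookkeeping together with the density-and-boundedness arguments needed to justify $F_{\partial \Omega }g=0$ on $W_{0}^{2,1}$ — that is, that the Borel--Pompeiu identity and its consequences genuinely extend from $C^{1}\left( \overline{\Omega }\right) $ to the Sobolev setting via the continuity-and-denseness extension announced in Section 2; once part (2) of the Proposition is available the algebraic step is immediate.
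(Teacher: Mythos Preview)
Your proposal is correct and follows exactly the route the paper intends: the corollary is stated without proof as an immediate consequence of part (2) of the Classical Analogous Results proposition, $\pi _{\alpha ,S^{n-1}}\Gamma _{\alpha }=\overline{\Gamma }_{\alpha }\left( I-F_{\partial \Omega }\right)$, together with $F_{\partial \Omega }g=0$ for $g\in W_{0}^{2,1}$. The paper spells out this very computation shortly afterwards in the proposition on $W_{0}^{p,k}$, where it proves $\pi _{\alpha ,S^{n-1}}\Gamma _{\alpha }g=\overline{\Gamma }_{\alpha }g$ by the same two ingredients you use; your additional remarks on surjectivity, well-definedness, and the alternative Borel--Pompeiu route are sound extras not present in the paper.
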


\begin{proposition}
Let $g\in W_{0}^{p,k}\left( \Omega ,Cl_{n}\right) ,1<p<\infty ,k=0,1,2,...$.
Then

\begin{enumerate}
\item $\qquad \pi _{\alpha ,S^{n-1}}\Gamma _{\alpha }g=\overline{\Gamma }%
_{\alpha }g$

\item \qquad $\Gamma _{\alpha }\pi _{\alpha ,S^{n-1}}=\pi _{\alpha ,S^{n-1}}$

\item \qquad $\pi _{\alpha ,S^{n-1}}\Gamma _{\alpha }g=\Gamma _{\alpha }\pi
_{\alpha ,S^{n-1}}g=\pi _{\alpha ,S^{n-1}}g$
\end{enumerate}
\end{proposition}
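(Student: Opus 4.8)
The plan is to derive all three identities from the earlier "Classical Analogous Results" proposition together with the Borel--Pompeiu formula and the defining relation $\pi_{\alpha,S^{n-1}} = \overline{\Gamma}_\alpha T_\Omega$, specialised to the situation $g\in W_0^{p,k}(\Omega,Cl_n)$, i.e.\ to functions whose trace on $\partial\Omega$ vanishes. The crucial simplification is that for such $g$ the boundary operator $F_{\partial\Omega}$ kills $g$: since $F_{\partial\Omega}g = F_{\partial\Omega}(\mathrm{tr}_{\partial\Omega}\, g)$ and $\mathrm{tr}_{\partial\Omega}\, g = 0$, we get $F_{\partial\Omega}g = 0$. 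Equivalently, Borel--Pompeiu reduces to $\chi_\Omega g = T_\Omega \Gamma_\alpha g$, i.e.\ $g = T_\Omega\Gamma_\alpha g$ on $\Omega$ (this is also the "Representation of a Function with Compact Support" proposition applied to the dense class and extended by continuity).

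For item (1): apply $\overline{\Gamma}_\alpha$ to the identity $g = T_\Omega\Gamma_\alpha g$. This gives $\overline{\Gamma}_\alpha g = \overline{\Gamma}_\alpha T_\Omega\Gamma_\alpha g = \pi_{\alpha,S^{n-1}}(\Gamma_\alpha g)$, which is exactly the claim. Alternatively one can invoke part (2) of the classical proposition, $\pi_{\alpha,S^{n-1}}\Gamma_\alpha = \overline{\Gamma}_\alpha(I - F_{\partial\Omega})$, and then note $F_{\partial\Omega}g = 0$ for $g\in W_0^{p,k}$; I would present the direct argument and mention the classical proposition as the conceptual source. For item (2): this is the assertion that $\pi_{\alpha,S^{n-1}}g$ is spherical left monogenic of order $\alpha$ whenever $g$ has zero trace, i.e.\ that $\pi_{\alpha,S^{n-1}}$ maps $W_0^{p,k}$ into $\ker\Gamma_\alpha$. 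Here I would start from part (1) of the classical proposition, $\Gamma_\alpha\pi_{\alpha,S^{n-1}} = \overline{\Gamma}_\alpha$ on all of $W^{p,k}$, and combine it with part (4), $\Gamma_\alpha\pi_{\alpha,S^{n-1}} - \pi_{\alpha,S^{n-1}} = \overline{\Gamma}_\alpha F_{\partial\Omega}$; restricting to $g\in W_0^{p,k}$ makes the right-hand side vanish, leaving $\Gamma_\alpha\pi_{\alpha,S^{n-1}}g = \pi_{\alpha,S^{n-1}}g$, which is the stated operator identity read off on this subspace. Finally, item (3) is a bookkeeping consequence: by item (1), $\pi_{\alpha,S^{n-1}}\Gamma_\alpha g = \overline{\Gamma}_\alpha g$; applying $\Gamma_\alpha$ to $\pi_{\alpha,S^{n-1}}g$ and using item (2) gives $\Gamma_\alpha\pi_{\alpha,S^{n-1}}g = \pi_{\alpha,S^{n-1}}g$; and one checks that $\overline{\Gamma}_\alpha g$ and $\pi_{\alpha,S^{n-1}}g$ agree by combining these with the representation $g = T_\Omega\Gamma_\alpha g$, so that $\pi_{\alpha,S^{n-1}}g = \overline{\Gamma}_\alpha T_\Omega\Gamma_\alpha g = \overline{\Gamma}_\alpha g = \pi_{\alpha,S^{n-1}}\Gamma_\alpha g$ — hence all three quantities coincide.

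The step I expect to be the genuine obstacle is the rigorous justification of the vanishing of the boundary terms and of the representation formula $g = T_\Omega\Gamma_\alpha g$ on the full Sobolev space $W_0^{p,k}(\Omega,Cl_n)$ rather than merely on $C^1_c$. The Borel--Pompeiu formula and the compact-support representation are stated for $C^1$ (resp.\ $C^1_c$) functions; extending them to $W_0^{p,k}$ requires the density of $C^\infty_c(\Omega,Cl_n)$ in $W_0^{p,k}(\Omega,Cl_n)$ together with the boundedness of $T_\Omega$, $\overline{\Gamma}_\alpha$, $\Gamma_\alpha$ and $F_{\partial\Omega}$ on the relevant Sobolev/Slobodeckij scales — all of which are recorded (or cited) earlier via the mapping properties $T_\Omega\colon W^{p,k}\to W^{p,k+1}$, $F_{\partial\Omega}\colon W^{p,k-1/p}(\partial\Omega)\to W^{p,k}$, and the boundedness of $\pi_{\alpha,S^{n-1}}$ through its symbol. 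I would therefore open the proof by fixing $g\in W_0^{p,k}$, choosing $g_j\in C^\infty_c(\Omega,Cl_n)$ with $g_j\to g$ in $W^{p,k}$, establishing the three identities for each $g_j$ by the elementary manipulations above, and then passing to the limit using continuity of the operators involved. The remainder is routine algebra with the four classical identities, so I would keep that part brief.
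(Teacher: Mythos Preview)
Your approach is essentially the same as the paper's: use parts (2) and (4) of the classical-analogous-results proposition together with $F_{\partial\Omega}g=0$ for $g\in W_0^{p,k}$, and then combine with classical part (1) for item (3). The paper omits the density/continuity extension you rightly discuss, and your displayed chain in item (3) contains a small slip (the equality $\pi_{\alpha,S^{n-1}}g=\overline{\Gamma}_\alpha T_\Omega\Gamma_\alpha g$ does not follow from $g=T_\Omega\Gamma_\alpha g$); the clean link is $\pi_{\alpha,S^{n-1}}g=\Gamma_\alpha\pi_{\alpha,S^{n-1}}g=\overline{\Gamma}_\alpha g$ via your item (2) and classical part (1), which is exactly how the paper closes the argument.
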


\begin{proof}
If $g\epsilon W^{p,k}\left( \Omega ,Cl_{n}\right) $ is compactly supported
over $\Omega $, then the boundary integral of $g$ is zero. That is $%
F_{\partial \Omega }g=0$, and therefore, from%
\begin{equation*}
\pi _{\alpha ,S^{n-1}}\Gamma _{\alpha }=\overline{\Gamma }_{\alpha }\left(
I-F_{\partial \Omega }\right)
\end{equation*}%
we have 
\begin{equation*}
\pi _{\alpha ,S^{n-1}}\Gamma _{\alpha }=\overline{\Gamma }_{\alpha }
\end{equation*}%
since the $\partial -$integral is zero, which proves (1)

Also from 
\begin{equation*}
\Gamma _{\alpha }\pi _{\alpha ,S^{n-1}}g-\pi _{\alpha ,S^{n-1}}g=\overline{%
\Gamma }_{\alpha }F_{\partial \Omega }\left( tr_{\partial \Omega }g\right)
\end{equation*}
as $g$ is compactly supported over the domain, its boundary integral over
the domain is zero, i.e. $F_{\partial \Omega }\left( tr_{\partial \Omega
}g\right) =0$.

Thus we have%
\begin{equation*}
\Gamma _{\alpha }\pi _{\alpha ,S^{n-1}}g-\pi _{\alpha ,S^{n-1}}g=\overline{%
\Gamma }_{\alpha }F_{\partial \Omega }\left( tr_{\partial \Omega }g\right) =0
\end{equation*}%
which implies 
\begin{equation*}
\Gamma _{\alpha }\pi _{\alpha ,S^{n-1}}g=\pi _{\alpha ,S^{n-1}}g
\end{equation*}
which proves (2).

Finally, 
\begin{equation*}
\Gamma _{\alpha }\pi _{\alpha ,S^{n-1}}g=\overline{\Gamma }_{\alpha }g
\end{equation*}%
on the Sobolev space $W^{p,k}\left( \Omega ,Cl_{n}\right) $ and in
particular when $tr_{\partial \Omega }g=0$, we have 
\begin{equation*}
\pi _{\alpha ,S^{n-1}}\Gamma _{\alpha }g=\overline{\Gamma }_{\alpha }g
\end{equation*}%
adjoining this with result (2) we prove (3).
\end{proof}

\ \ \ 

\begin{remark}
On the space of functions with compact support, the action of the spherical
Dirac operator from the right and from the left on $\pi _{\alpha ,S^{n-1}}$
is irrelevant.
\end{remark}

\begin{corollary}
If $f$ is a smooth Clifford valued function which has a compact support over 
$\Omega $, then $\pi _{\alpha ,S^{n-1}}$ and $\Gamma _{\alpha }$ commute at $%
f$ and furthermore, their product \ at such a function is the Clifford
conjugate $\overline{\Gamma }_{\alpha }$\ of $\Gamma _{\alpha }$.
\end{corollary}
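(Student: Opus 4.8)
The plan is to deduce this corollary directly from the immediately preceding Proposition (the one for $g\in W_0^{p,k}(\Omega,Cl_n)$, i.e. functions vanishing on the boundary) together with the Remark that precedes it. First I would note that a smooth Clifford-valued function $f$ with compact support inside $\Omega$ has vanishing trace on $\partial\Omega$, hence belongs to $W_0^{p,k}(\Omega,Cl_n)$ for every admissible $p,k$; in particular the boundary integral $F_{\partial\Omega}f$ vanishes, which is the hypothesis under which the preceding Proposition was proved.

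Next I would invoke parts (1) and (2) of that Proposition applied to such an $f$. Part (2) gives $\Gamma_\alpha\pi_{\alpha,S^{n-1}}f=\pi_{\alpha,S^{n-1}}f$, and part (1) (with $g=f$, noting $\pi_{\alpha,S^{n-1}}\Gamma_\alpha f=\overline{\Gamma}_\alpha f$) together with part (3) gives $\pi_{\alpha,S^{n-1}}\Gamma_\alpha f=\Gamma_\alpha\pi_{\alpha,S^{n-1}}f=\pi_{\alpha,S^{n-1}}f$. Thus the two composite operators $\Gamma_\alpha\pi_{\alpha,S^{n-1}}$ and $\pi_{\alpha,S^{n-1}}\Gamma_\alpha$ agree at $f$, which is exactly the assertion that $\pi_{\alpha,S^{n-1}}$ and $\Gamma_\alpha$ commute at $f$. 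To get the second claim I would combine this with the general identity $\Gamma_\alpha\pi_{\alpha,S^{n-1}}=\overline{\Gamma}_\alpha$ from part (1) of the earlier "Classical Analogous Results" proposition: evaluated at $f$ this reads $\Gamma_\alpha\pi_{\alpha,S^{n-1}}f=\overline{\Gamma}_\alpha f$, and chaining the equalities yields $\pi_{\alpha,S^{n-1}}\Gamma_\alpha f=\Gamma_\alpha\pi_{\alpha,S^{n-1}}f=\overline{\Gamma}_\alpha f$, so the common value of the product operators at $f$ is $\overline{\Gamma}_\alpha f$.

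The only subtlety — and the step I expect to need the most care — is the bookkeeping about which operator ordering the Remark really licenses: the Remark asserts that left versus right action of $\Gamma_\alpha$ on $\pi_{\alpha,S^{n-1}}$ is "irrelevant" on compactly supported functions, and I must make sure the corollary's phrase "$\pi_{\alpha,S^{n-1}}$ and $\Gamma_\alpha$ commute at $f$" is interpreted consistently with that, i.e. as $\Gamma_\alpha(\pi_{\alpha,S^{n-1}}f)=\pi_{\alpha,S^{n-1}}(\Gamma_\alpha f)$ rather than any pointwise Clifford-multiplication statement. Once that reading is fixed, the proof is a two-line assembly of the cited identities, so I would keep it short: invoke compact support $\Rightarrow$ zero boundary integral, quote parts (1)–(3) of the preceding Proposition, and quote part (1) of the "Classical Analogous Results" Proposition to identify the common value as $\overline{\Gamma}_\alpha f$.

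\begin{proof}
Since $f$ is smooth with compact support inside $\Omega$, its trace on $\partial\Omega$ vanishes, so $f\in W_0^{p,k}(\Omega,Cl_n)$ and $F_{\partial\Omega}f=0$. By part (2) of the preceding Proposition, $\Gamma_\alpha\pi_{\alpha,S^{n-1}}f=\pi_{\alpha,S^{n-1}}f$, and by part (3), $\pi_{\alpha,S^{n-1}}\Gamma_\alpha f=\Gamma_\alpha\pi_{\alpha,S^{n-1}}f$. Hence $\pi_{\alpha,S^{n-1}}$ and $\Gamma_\alpha$ commute at $f$. Finally, part (1) of the Proposition ``Classical Analogous Results'' gives $\Gamma_\alpha\pi_{\alpha,S^{n-1}}=\overline{\Gamma}_\alpha$, so evaluating at $f$ yields
\begin{equation*}
\pi_{\alpha,S^{n-1}}\Gamma_\alpha f=\Gamma_\alpha\pi_{\alpha,S^{n-1}}f=\overline{\Gamma}_\alpha f,
\end{equation*}
which is the asserted common value.
\end{proof}
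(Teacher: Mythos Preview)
Your proposal is correct and matches the paper's approach exactly: the paper states this Corollary without a separate proof, treating it as an immediate consequence of the preceding Proposition for $g\in W_0^{p,k}(\Omega,Cl_n)$ (parts (1)--(3)) together with part (1) of the ``Classical Analogous Results'' proposition, which is precisely the chain of identities you assemble. Your observation that compact support forces $tr_{\partial\Omega}f=0$ and hence $F_{\partial\Omega}f=0$ is the only step the paper leaves implicit, and you have supplied it correctly.
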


\begin{proposition}
In the Sobolev space $W^{2,1}\left( \Omega ,Cl_{n}\right) $, 
\begin{equation*}
f\in \ker \overline{\Gamma }_{\alpha }\Rightarrow \text{ }\pi _{\alpha
,S^{n-1}}f\in \ker \Gamma _{\alpha }
\end{equation*}%
That is 
\begin{equation*}
\pi _{\alpha ,S^{n-1}}:W^{2,1}\left( \Omega \right) \cap \ker \overline{%
\Gamma }_{\alpha }\rightarrow W^{2,1}\left( \Omega \right) \cap \ker \Gamma
_{\alpha }.
\end{equation*}
\end{proposition}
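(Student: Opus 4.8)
The plan is to read off the result directly from part (1) of the earlier ``Classical Analogous Results'' proposition, namely the identity $\Gamma_{\alpha}\pi_{\alpha,S^{n-1}} = \overline{\Gamma}_{\alpha}$ on $W^{p,k}(\Omega,Cl_n)$. Taking $p=2$ and $k=1$, this says that for every $f\in W^{2,1}(\Omega,Cl_n)$ we have $\Gamma_{\alpha}\bigl(\pi_{\alpha,S^{n-1}}f\bigr) = \overline{\Gamma}_{\alpha}f$. So if in addition $f\in\ker\overline{\Gamma}_{\alpha}$, then $\Gamma_{\alpha}\bigl(\pi_{\alpha,S^{n-1}}f\bigr) = \overline{\Gamma}_{\alpha}f = 0$, which is precisely the assertion $\pi_{\alpha,S^{n-1}}f\in\ker\Gamma_{\alpha}$.

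First I would record the mapping property already noted when $\pi_{\alpha,S^{n-1}}$ was defined: it maps $W^{p,k}(\Omega,Cl_n)$ into itself, so in particular $\pi_{\alpha,S^{n-1}}f\in W^{2,1}(\Omega,Cl_n)$ whenever $f\in W^{2,1}(\Omega,Cl_n)$. This guarantees that the target space in the displayed conclusion is the correct one and that applying $\Gamma_{\alpha}$ to $\pi_{\alpha,S^{n-1}}f$ makes sense. Then I would invoke identity (1) of the proposition, substitute the hypothesis $\overline{\Gamma}_{\alpha}f=0$, and conclude $\Gamma_{\alpha}\pi_{\alpha,S^{n-1}}f=0$. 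Combining the two observations gives the inclusion $\pi_{\alpha,S^{n-1}}\colon W^{2,1}(\Omega)\cap\ker\overline{\Gamma}_{\alpha}\to W^{2,1}(\Omega)\cap\ker\Gamma_{\alpha}$ stated at the end.

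There is really no substantial obstacle here: the proposition is an immediate corollary of an identity already established (the content is essentially that $\pi_{\alpha,S^{n-1}}$ intertwines $\ker\overline{\Gamma}_{\alpha}$ and $\ker\Gamma_{\alpha}$, which is the Bergman-space mapping property restated for $k=1$). The only point requiring a word of care is that the earlier identity $\Gamma_{\alpha}\pi_{\alpha,S^{n-1}}=\overline{\Gamma}_{\alpha}$ is an operator identity on all of $W^{p,k}$, so no compact-support or boundary-trace hypothesis is needed; one should make sure in the write-up not to accidentally cite the compactly supported version (Proposition with the $W_0^{p,k}$ hypothesis) but rather the unrestricted one. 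With that caveat the proof is two lines.

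\begin{proof}
Let $f\in W^{2,1}\left( \Omega ,Cl_{n}\right) $. By the mapping property of $\pi _{\alpha ,S^{n-1}}$ on the scale of Sobolev spaces, $\pi _{\alpha ,S^{n-1}}f\in W^{2,1}\left( \Omega ,Cl_{n}\right) $, so $\Gamma _{\alpha }\pi _{\alpha ,S^{n-1}}f$ is well defined. By part (1) of the Classical Analogous Results proposition, $\Gamma _{\alpha }\pi _{\alpha ,S^{n-1}}=\overline{\Gamma }_{\alpha }$ as operators on $W^{2,1}\left( \Omega ,Cl_{n}\right) $, hence
\begin{equation*}
\Gamma _{\alpha }\left( \pi _{\alpha ,S^{n-1}}f\right) =\overline{\Gamma }_{\alpha }f.
\end{equation*}
If moreover $f\in \ker \overline{\Gamma }_{\alpha }$, the right-hand side vanishes, so $\Gamma _{\alpha }\left( \pi _{\alpha ,S^{n-1}}f\right) =0$, i.e. $\pi _{\alpha ,S^{n-1}}f\in \ker \Gamma _{\alpha }$. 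This proves
\begin{equation*}
\pi _{\alpha ,S^{n-1}}:W^{2,1}\left( \Omega \right) \cap \ker \overline{\Gamma }_{\alpha }\rightarrow W^{2,1}\left( \Omega \right) \cap \ker \Gamma _{\alpha }.
\end{equation*}
\end{proof}
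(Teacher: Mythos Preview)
Your proof is correct and follows essentially the same approach as the paper: both invoke the regularity-preserving property of $\pi_{\alpha,S^{n-1}}$ and then apply the identity $\Gamma_{\alpha}\pi_{\alpha,S^{n-1}}=\overline{\Gamma}_{\alpha}$ from the Classical Analogous Results proposition to conclude that $\overline{\Gamma}_{\alpha}f=0$ forces $\Gamma_{\alpha}(\pi_{\alpha,S^{n-1}}f)=0$.
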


\begin{proof}
Let $f\in W^{2,1}\left( \Omega ,Cl_{n}\right) \cap \ker \overline{\Gamma }%
_{\alpha }$. Then $\pi _{\alpha ,S^{n-1}}f\in W^{2,1}\left( \Omega
,Cl_{n}\right) $ as $\pi _{\alpha ,S^{n-1}}$ preserves regularity, and $%
\overline{\Gamma }_{\alpha }f=0$. But from the relation 
\begin{equation*}
\overline{\Gamma }_{\alpha }=\Gamma _{\alpha }\pi _{\alpha ,S^{n-1}}
\end{equation*}%
we get 
\begin{equation*}
\Gamma _{\alpha }\pi _{\alpha ,S^{n-1}}f=0
\end{equation*}%
which is the required result. From this result, one can see that the $\pi
_{\alpha }$ operator preserves monogenicity or hypercomplex regularity of
functions in the sense : 
\begin{equation*}
\pi _{\alpha ,S^{n-1}}:\overline{B}_{\alpha }^{2}\left( \Omega
,Cl_{n}\right) \rightarrow B_{\alpha }^{2}\left( \Omega ,Cl_{n}\right)
\end{equation*}%
where 
\begin{equation*}
\overline{B}_{\alpha }^{2}\left( \Omega ,Cl_{n}\right) =L^{2}\left( \Omega
,Cl_{n}\right) \cap \ker \overline{\Gamma }_{\alpha }\text{ and \ }B_{\alpha
}^{2}\left( \Omega ,Cl_{n}\right)
\end{equation*}%
is the Bergman space mentioned above.
\end{proof}

\ \ \ \ \ 

\begin{proposition}
Let%
\begin{equation*}
\Lambda _{0,j}:=e_{0}e_{j}\det \left( 
\begin{array}{cc}
\omega _{0} & \omega _{j} \\ 
\frac{\partial }{\partial \omega _{0}} & \frac{\partial }{\partial \omega
_{j}}%
\end{array}%
\right) ,j=1,...,n.
\end{equation*}

If $\pi _{\alpha ,S^{n-1}}$ fixes $f\in L^{2}\left( \Omega ,Cl_{n}\right) $,
then $f$ satisfies the equation 
\begin{equation*}
f=\left( \dsum\limits_{0<j}\Lambda _{0,j}+\alpha \right) T_{\Omega }f.
\end{equation*}
\end{proposition}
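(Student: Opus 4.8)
The plan is to read the hypothesis literally: ``$\pi_{\alpha,S^{n-1}}$ fixes $f$'' means $\pi_{\alpha,S^{n-1}}f=f$, and by the definition $\pi_{\alpha,S^{n-1}}=\overline{\Gamma}_{\alpha}T_{\Omega}$ this is exactly the identity $\overline{\Gamma}_{\alpha}T_{\Omega}f=f$. So the entire content of the proposition is the algebraic observation that the first-order operator $\sum_{0<j}\Lambda_{0,j}+\alpha$ is precisely the Clifford-conjugated spherical Dirac operator $\overline{\Gamma}_{\alpha}$; granting that, substitution into $\overline{\Gamma}_{\alpha}T_{\Omega}f=f$ finishes the proof immediately.

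First I would compute the Clifford conjugate of $\Gamma_{\alpha}=\Gamma_{\omega}+\alpha$ coefficient by coefficient. Using $\overline{e}_{ij}=\overline{e_{i}e_{j}}=\overline{e}_{j}\,\overline{e}_{i}=(-e_{j})(-e_{i})=e_{j}e_{i}=-e_{ij}$ together with $\overline{\alpha}=\alpha$ (since $\alpha$ is a scalar), the expression $\Gamma_{\omega}=-\sum_{i<j}e_{ij}\bigl(\omega_{i}\frac{\partial}{\partial\omega_{j}}-\omega_{j}\frac{\partial}{\partial\omega_{i}}\bigr)$ from the definition of the spherical Dirac operator gives
\[
\overline{\Gamma}_{\alpha}=\overline{\Gamma}_{\omega}+\alpha=\sum_{i<j}e_{ij}\Bigl(\omega_{i}\frac{\partial}{\partial\omega_{j}}-\omega_{j}\frac{\partial}{\partial\omega_{i}}\Bigr)+\alpha .
\]

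Next I would match each summand with one of the $\Lambda$'s: since $e_{ij}=e_{i}e_{j}$ and $\omega_{i}\frac{\partial}{\partial\omega_{j}}-\omega_{j}\frac{\partial}{\partial\omega_{i}}$ is exactly the $2\times2$ determinant occurring in the definition of $\Lambda_{i,j}$, we have $e_{ij}\bigl(\omega_{i}\frac{\partial}{\partial\omega_{j}}-\omega_{j}\frac{\partial}{\partial\omega_{i}}\bigr)=\Lambda_{i,j}$, so $\overline{\Gamma}_{\alpha}=\sum_{i<j}\Lambda_{i,j}+\alpha$ (the index $0$ in $\Lambda_{0,j}$ simply playing the role of the running lower index $i$ in the coordinate labelling used here). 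Substituting this into $f=\pi_{\alpha,S^{n-1}}f=\overline{\Gamma}_{\alpha}T_{\Omega}f$ yields
\[
f=\Bigl(\sum_{0<j}\Lambda_{0,j}+\alpha\Bigr)T_{\Omega}f ,
\]
as claimed.

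There is no serious obstacle here; the argument is entirely formal once the identification $\overline{\Gamma}_{\alpha}=\sum\Lambda+\alpha$ is made. The only points requiring attention are the sign bookkeeping in the Clifford conjugation $\overline{e}_{ij}=-e_{ij}$, which flips $\Gamma_{\omega}$ to $-\Gamma_{\omega}=\sum_{i<j}\Lambda_{i,j}$ while leaving the scalar part $\alpha$ fixed, and the observation that $T_{\Omega}$, and hence $\pi_{\alpha,S^{n-1}}$, is defined on all of $L^{2}(\Omega,Cl_{n})$ by the density extension mentioned earlier, so that the fixed-point equation $\pi_{\alpha,S^{n-1}}f=f$ is meaningful for $f\in L^{2}$.
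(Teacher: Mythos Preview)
Your route differs from the paper's in a real way. The paper does \emph{not} identify $\overline{\Gamma}_{\alpha}$ with $\sum_{0<j}\Lambda_{0,j}+\alpha$ directly; instead it writes
\[
\overline{\Gamma}_{\alpha}=2\Bigl(\sum_{0<j}\Lambda_{0,j}+\alpha\Bigr)-\Gamma_{\alpha},
\]
applies this to $T_{\Omega}f$, and then invokes the right-inverse property $\Gamma_{\alpha}T_{\Omega}f=f$ (Proposition~3). That turns the fixed-point equation $\overline{\Gamma}_{\alpha}T_{\Omega}f=f$ into $2(\sum\Lambda_{0,j}+\alpha)T_{\Omega}f-f=f$, and the factor $2$ cancels. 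So the paper's argument genuinely uses $\Gamma_{\alpha}T_{\Omega}=I$ as an ingredient, while yours never touches it.

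Your shorter argument hinges entirely on the claim $\overline{\Gamma}_{\alpha}=\sum_{0<j}\Lambda_{0,j}+\alpha$, obtained from $\overline{e}_{ij}=-e_{ij}$ together with your reading of ``$0$'' as a running lower index. That reading is plausible given the paper's loose notation, but you should notice it is \emph{incompatible} with the identity the paper actually uses: if both held, subtraction would force $\Gamma_{\alpha}=\overline{\Gamma}_{\alpha}$, i.e.\ $\Gamma_{\omega}=0$. So at most one of the two algebraic formulas for $\overline{\Gamma}_{\alpha}$ is right under any fixed convention. Your conjugation computation $\overline{\Gamma}_{\omega}=-\Gamma_{\omega}$ is clean and correct as far as it goes; the residual uncertainty is whether $\sum_{0<j}\Lambda_{0,j}$ is meant to be all of $-\Gamma_{\omega}$ (your reading) or only its $i=0$ slice (the literal reading, which is what produces the paper's factor of $2$ once one splits $\overline{\Gamma}_{\alpha}+\Gamma_{\alpha}$). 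Either way, the paper's own proof goes through the decomposition-plus-right-inverse mechanism rather than your direct substitution.
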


\begin{proof}
First we rewrite the conjugate of the spherical Dirac operator $\overline{%
\Gamma }_{\alpha }$ as 
\begin{equation*}
\overline{\Gamma }_{\alpha }=2\left( \dsum\limits_{0<j}\underset{\Lambda
_{0,j}}{\underbrace{e_{0}e_{j}\left( \omega _{0}\frac{\partial }{\partial
\omega _{j}}-\omega _{j}\frac{\partial }{\partial \omega _{0}}\right) }}%
+\alpha \right) -\Gamma _{\alpha }.
\end{equation*}%
Then 
\begin{equation*}
\pi _{\alpha ,S^{n-1}}f=f
\end{equation*}%
implies%
\begin{equation*}
\overline{\Gamma }_{\alpha }T_{\Omega }f=f.
\end{equation*}%
Using the expression for $\overline{\Gamma }_{\alpha }$ in terms of $\Gamma
_{\alpha }$ given above we get the desired result.
\end{proof}

\ \ \ \ \ \ \ \ 

\begin{remark}
From the above result and the orthogonal decomposition of the Hilbert space,
we can see that the spherical $\pi _{\alpha ,S^{n-1}}$ has the mapping
property:

\begin{equation*}
\pi _{\alpha ,S^{n-1}}:\overline{\Gamma }_{\alpha }\left( W_{0}^{2,1}\left(
\Omega ,Cl_{n}\right) \right) \rightarrow \Gamma _{\alpha }\left(
W_{0}^{2,1}\left( \Omega ,Cl_{n}\right) \right)
\end{equation*}

pictorially, we describe the above mapping properties as(for further
studies, see \cite{gurkah},\cite{gurmal}):

\begin{equation*}
L^{2}\left( \Omega ,Cl_{n}\right) =\left. 
\begin{array}{c}
\underset{\downarrow }{\underbrace{B_{\alpha }^{2}\left( \Omega
,Cl_{n}\right) }}\oplus \underset{\downarrow }{\underbrace{\overline{\Gamma }%
_{\alpha }\left( W_{0}^{2,1}\left( \Omega ,Cl_{n}\right) \right) }} \\ 
\overline{B}_{\alpha }^{2}\left( \Omega ,Cl_{n}\right) \oplus \Gamma
_{\alpha }\left( W_{0}^{2,1}\left( \Omega ,Cl_{n}\right) \right)%
\end{array}%
\right\} \downarrow :\pi _{\alpha ,S^{n-1}}
\end{equation*}
\end{remark}

\begin{proposition}
Let $p\in \left( 1,\infty \right) ,k\in 
\mathbb{Z}
\cup \left\{ 0\right\} $ and $f\in W^{p,k}\left( \Omega ,Cl_{n}\right) $.
Then 
\begin{equation*}
\overline{\pi }_{\alpha ,S^{n-1}}\pi _{\alpha ,S^{n-1}}f+\Gamma _{\alpha }%
\overline{F}_{\partial \Omega }T_{\Omega }f=f.
\end{equation*}
\end{proposition}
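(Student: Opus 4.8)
The plan is to derive the identity by expanding both $\overline{\pi}_{\alpha,S^{n-1}}\pi_{\alpha,S^{n-1}}$ and $\Gamma_\alpha\overline{F}_{\partial\Omega}T_\Omega$ in terms of the basic operators $T_\Omega,\overline{T}_\Omega,F_{\partial\Omega},\overline{F}_{\partial\Omega},\Gamma_\alpha,\overline{\Gamma}_\alpha$, and then to collapse the sum using the Borel--Pompeiu formula (Theorem, $\chi_\Omega f=F_{\partial\Omega}f+T_\Omega\Gamma_\alpha f$) together with the conjugate version $\chi_\Omega f=\overline{F}_{\partial\Omega}f+\overline{T}_\Omega\overline{\Gamma}_\alpha f$ and the right-inverse property $\Gamma_\alpha T_\Omega=\mathrm{id}$ (Proposition on $\Gamma_\alpha T_\Omega f=f$) and its conjugate $\overline{\Gamma}_\alpha\overline{T}_\Omega=\mathrm{id}$. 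The upshot should be that $\overline{\pi}_{\alpha,S^{n-1}}\pi_{\alpha,S^{n-1}}$ is a ``$\Gamma_\alpha\overline{T}_\Omega\overline{\Gamma}_\alpha T_\Omega$'' composite and the second term supplies exactly the boundary defect needed to restore $f$.

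First I would write $\pi_{\alpha,S^{n-1}}f=\overline{\Gamma}_\alpha T_\Omega f$ by definition, and $\overline{\pi}_{\alpha,S^{n-1}}g=\Gamma_\alpha\overline{T}_\Omega g$ by the definition of the Clifford conjugate operator, so that
\begin{equation*}
\overline{\pi}_{\alpha,S^{n-1}}\pi_{\alpha,S^{n-1}}f=\Gamma_\alpha\overline{T}_\Omega\,\overline{\Gamma}_\alpha T_\Omega f.
\end{equation*}
Next, apply the conjugate Borel--Pompeiu formula to the function $h:=T_\Omega f$, namely $\chi_\Omega h=\overline{F}_{\partial\Omega}h+\overline{T}_\Omega\overline{\Gamma}_\alpha h$; on the interior of $\Omega$ this reads $\overline{T}_\Omega\overline{\Gamma}_\alpha T_\Omega f=T_\Omega f-\overline{F}_{\partial\Omega}T_\Omega f$. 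Substituting and using linearity of $\Gamma_\alpha$,
\begin{equation*}
\overline{\pi}_{\alpha,S^{n-1}}\pi_{\alpha,S^{n-1}}f=\Gamma_\alpha\bigl(T_\Omega f-\overline{F}_{\partial\Omega}T_\Omega f\bigr)=\Gamma_\alpha T_\Omega f-\Gamma_\alpha\overline{F}_{\partial\Omega}T_\Omega f.
\end{equation*}
Then the right-inverse property $\Gamma_\alpha T_\Omega f=f$ gives $\overline{\pi}_{\alpha,S^{n-1}}\pi_{\alpha,S^{n-1}}f=f-\Gamma_\alpha\overline{F}_{\partial\Omega}T_\Omega f$, which rearranges to the claimed identity. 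For the Sobolev-space statement one notes, as recorded in the excerpt, that $T_\Omega$ and $\overline{T}_\Omega$ raise the regularity index by one while $F_{\partial\Omega},\overline{F}_{\partial\Omega}$ and $\Gamma_\alpha,\overline{\Gamma}_\alpha$ keep things inside the Sobolev scale, so every composite above is a well-defined bounded operator on $W^{p,k}(\Omega,Cl_n)$ and the identity extends from $C^1$ by density.

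The main obstacle I anticipate is justifying the conjugate Borel--Pompeiu formula and the conjugate right-inverse identity $\overline{\Gamma}_\alpha\overline{T}_\Omega=\mathrm{id}$: the paper states Borel--Pompeiu and $\Gamma_\alpha T_\Omega f=f$ only for $\Gamma_\alpha$ and $T_\Omega$, so I would have to observe that applying the Clifford conjugation $\,\overline{\phantom{x}}\,$ to those identities (using that $\overline{\Psi^n_\alpha}$ is precisely the kernel of $\overline{T}_\Omega$ and that conjugation is an algebra anti-automorphism interchanging $\Gamma_\alpha$ with $\overline{\Gamma}_\alpha$) yields the conjugate versions verbatim; making that passage precise — in particular checking that the order-of-operators issues in $f(x)\Gamma_\alpha=0$ versus $\Gamma_\alpha f=0$ are handled correctly when conjugating products — is the one genuinely delicate point. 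A secondary, more bookkeeping-level concern is the placement of the cut-off factor $\chi_\Omega$ (i.e. evaluating strictly in the interior so that the $\chi_\Omega$ drops out) and confirming that $\overline{F}_{\partial\Omega}$ here means the non-singular boundary operator $\overline{F}_{\partial\Omega}T_\Omega f$ evaluated at interior points, not its singular (principal-value) analogue; once those identifications are pinned down the computation is the short chain displayed above.
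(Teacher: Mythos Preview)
The paper states this proposition without proof, so there is no argument to compare against directly. Your derivation is correct and is exactly the computation one expects: unfold the definitions to get $\overline{\pi}_{\alpha,S^{n-1}}\pi_{\alpha,S^{n-1}}f=\Gamma_\alpha\overline{T}_\Omega\overline{\Gamma}_\alpha T_\Omega f$, apply the conjugate Borel--Pompeiu formula to $T_\Omega f$ to rewrite $\overline{T}_\Omega\overline{\Gamma}_\alpha T_\Omega f$ as $T_\Omega f-\overline{F}_{\partial\Omega}T_\Omega f$, and finish with $\Gamma_\alpha T_\Omega f=f$. This is the standard route in the Euclidean setting (cf.\ the G\"urlebeck--K\"ahler references the paper cites), and nothing in the spherical case changes the algebra.

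Your self-identified obstacle is the right one to flag, and your resolution is adequate for the level of rigor in this paper: the conjugate identities $\overline{\Gamma}_\alpha\overline{T}_\Omega=\mathrm{id}$ and $\chi_\Omega h=\overline{F}_{\partial\Omega}h+\overline{T}_\Omega\overline{\Gamma}_\alpha h$ follow by applying Clifford conjugation to the stated $\Gamma_\alpha$-versions, using that conjugation is an anti-automorphism sending $\Psi^n_\alpha$ to $\overline{\Psi^n_\alpha}$ (the kernel defining $\overline{T}_\Omega$) and $\Gamma_\alpha$ to $\overline{\Gamma}_\alpha$. The paper implicitly relies on exactly this when it immediately deduces the conjugate identity $\pi_{\alpha,S^{n-1}}\overline{\pi}_{\alpha,S^{n-1}}f+\overline{\Gamma}_\alpha F_{\partial\Omega}\overline{T}_\Omega f=f$ as a corollary ``by taking the complexified Clifford conjugate,'' so your use of the same device is in keeping with the paper's conventions. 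Note also that $\overline{F}_{\partial\Omega}$ is never explicitly defined in the paper; your reading of it as the non-singular boundary operator with kernel $\overline{\Psi^n_\alpha}$ evaluated at interior points is the only one that makes the statement true, and is consistent with how the paper uses $F_{\partial\Omega}$.
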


\begin{corollary}
By taking the complexified Clifford conjugate of the above equation we get 
\begin{equation*}
\pi _{\alpha ,S^{n-1}}\overline{\pi }_{\alpha ,S^{n-1}}f+\overline{\Gamma }%
_{\alpha }F_{\partial \Omega }\overline{T}_{\Omega }f=f
\end{equation*}
\end{corollary}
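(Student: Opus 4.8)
The plan is to mimic the Borel–Pompeiu bookkeeping that underlies all the identities in Proposition 3 (the ``Classical Analogous Results''), now applied to the pair of conjugate operators. First I would unwind the definitions: $\pi_{\alpha,S^{n-1}}f=\overline{\Gamma}_\alpha T_\Omega f$ and $\overline{\pi}_{\alpha,S^{n-1}}g=\Gamma_\alpha \overline{T}_\Omega g$, so that
\begin{equation*}
\overline{\pi}_{\alpha,S^{n-1}}\pi_{\alpha,S^{n-1}}f=\Gamma_\alpha\overline{T}_\Omega\bigl(\overline{\Gamma}_\alpha T_\Omega f\bigr).
\end{equation*}
The natural move is to apply the conjugate Borel–Pompeiu formula to the inner $Cl_n$-valued function $u:=T_\Omega f$. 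The conjugated version of Theorem 1 reads $\chi_\Omega u=\overline{F}_{\partial\Omega}u+\overline{T}_\Omega\overline{\Gamma}_\alpha u$ (take the Clifford conjugate of the whole Borel–Pompeiu identity, using that conjugation is an algebra anti-automorphism fixing $\Omega$). Rearranged, $\overline{T}_\Omega\overline{\Gamma}_\alpha u=u-\overline{F}_{\partial\Omega}u$ on $\Omega$.

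Next I would apply $\Gamma_\alpha$ to both sides of that identity with $u=T_\Omega f$. On the left this produces exactly $\overline{\pi}_{\alpha,S^{n-1}}\pi_{\alpha,S^{n-1}}f$, since $\Gamma_\alpha\overline{T}_\Omega(\overline{\Gamma}_\alpha T_\Omega f)=\Gamma_\alpha\overline{T}_\Omega(\overline{\pi}\,\text{-integrand})$ — more precisely $\overline{\Gamma}_\alpha u=\overline{\Gamma}_\alpha T_\Omega f=\pi_{\alpha,S^{n-1}}f$, and then $\Gamma_\alpha\overline{T}_\Omega(\pi_{\alpha,S^{n-1}}f)=\overline{\pi}_{\alpha,S^{n-1}}\pi_{\alpha,S^{n-1}}f$. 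On the right I get $\Gamma_\alpha u-\Gamma_\alpha\overline{F}_{\partial\Omega}u=\Gamma_\alpha T_\Omega f-\Gamma_\alpha\overline{F}_{\partial\Omega}T_\Omega f$. Now invoke Proposition 2 (that $T_\Omega$ is a right inverse of $\Gamma_\alpha$): $\Gamma_\alpha T_\Omega f=f$. Hence
\begin{equation*}
\overline{\pi}_{\alpha,S^{n-1}}\pi_{\alpha,S^{n-1}}f=f-\Gamma_\alpha\overline{F}_{\partial\Omega}T_\Omega f,
\end{equation*}
which upon moving the boundary term to the left is precisely the claimed identity $\overline{\pi}_{\alpha,S^{n-1}}\pi_{\alpha,S^{n-1}}f+\Gamma_\alpha\overline{F}_{\partial\Omega}T_\Omega f=f$.

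For the Sobolev-space generality I would note that $T_\Omega:W^{p,k}\to W^{p,k+1}$ and $F_{\partial\Omega}$ (hence its conjugate) maps into the appropriate space, so every operator above is well-defined on $W^{p,k}(\Omega,Cl_n)$ for $1<p<\infty$, $k\in\mathbb{Z}_{\ge0}$, and the identities, being established for $C^1$ functions, extend by the density/continuity argument already cited in the preliminaries. The main obstacle I anticipate is purely bookkeeping: verifying that Clifford conjugation genuinely commutes with the integral operators in the way needed — i.e. that $\overline{F_{\partial\Omega}u}=\overline{F}_{\partial\Omega}\overline{u}$ and likewise for $T_\Omega$, given that the kernels $\overline{\Psi^n_\alpha}$ and the normal-vector factor $n(\upsilon)$ do not commute and conjugation reverses products. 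One must be careful that the ``conjugate Borel–Pompeiu formula'' is stated with the operators in the order that makes the subsequent application of $\Gamma_\alpha$ (acting on the left) collapse correctly; once the correct ordering is fixed, the rest is a one-line substitution. The Corollary then follows by taking the complexified Clifford conjugate of the whole displayed equation, using $\overline{\overline{T}_\Omega}=T_\Omega$, $\overline{\Gamma_\alpha}=\overline{\Gamma}_\alpha$, and $\overline{\overline{F}_{\partial\Omega}}=F_{\partial\Omega}$, exactly as the Corollary's one-line proof indicates.
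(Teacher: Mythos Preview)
Your proposal is correct and matches the paper's approach. The paper states the preceding Proposition without proof and then obtains the Corollary in one line by ``taking the complexified Clifford conjugate''; you supply a clean Borel--Pompeiu derivation of that Proposition (apply the conjugate Borel--Pompeiu identity to $u=T_\Omega f$, then hit with $\Gamma_\alpha$ and use $\Gamma_\alpha T_\Omega=I$) and finish with exactly the same conjugation step, so the two arguments coincide on the Corollary itself.
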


\begin{corollary}
The $\pi _{\alpha ,S^{n}}$ operator is left invertible on the space $\Gamma
_{\alpha }\left( W_{0}^{2,1}\left( \Omega ,Cl_{n}\right) \right) $ with left
inverse of $\overline{\pi }_{\alpha ,S^{n-1}}$.
\end{corollary}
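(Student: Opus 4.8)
The plan is to read the result straight off the immediately preceding Proposition,
\begin{equation*}
\overline{\pi }_{\alpha ,S^{n-1}}\pi _{\alpha ,S^{n-1}}f+\Gamma _{\alpha }\overline{F}_{\partial \Omega }T_{\Omega }f=f,
\end{equation*}
by showing that the second term on the left drops out as soon as $f$ is restricted to the subspace $\Gamma _{\alpha }\left( W_{0}^{2,1}\left( \Omega ,Cl_{n}\right) \right) $. Accordingly I would start by fixing an arbitrary element of that subspace, written as $f=\Gamma _{\alpha }g$ with $g\in W_{0}^{2,1}\left( \Omega ,Cl_{n}\right) $, so that $tr_{\partial \Omega }g=0$.

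The first step is to identify $T_{\Omega }f$. Applying the Borel--Pompeiu formula to $g$ gives $\chi _{\Omega }g=F_{\partial \Omega }g+T_{\Omega }\Gamma _{\alpha }g$; since $F_{\partial \Omega }g=F_{\partial \Omega }\left( tr_{\partial \Omega }g\right) =0$ for traceless $g$, this yields $T_{\Omega }f=T_{\Omega }\Gamma _{\alpha }g=g$ on $\Omega $. In particular $T_{\Omega }f=g$ is again a traceless $W_{0}^{2,1}$ function. The second step is to kill the boundary term: the conjugate boundary operator $\overline{F}_{\partial \Omega }$ depends on its argument only through its trace in exactly the way $F_{\partial \Omega }$ does (it is built from the conjugated kernel $\overline{\Psi _{\alpha }^{n}}$ but is otherwise the same boundary transform, so $\overline{F}_{\partial \Omega }h=\overline{F}_{\partial \Omega }\left( tr_{\partial \Omega }h\right) $). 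Hence $\overline{F}_{\partial \Omega }T_{\Omega }f=\overline{F}_{\partial \Omega }g=\overline{F}_{\partial \Omega }\left( tr_{\partial \Omega }g\right) =0$, so $\Gamma _{\alpha }\overline{F}_{\partial \Omega }T_{\Omega }f=0$. Substituting into the displayed identity gives $\overline{\pi }_{\alpha ,S^{n-1}}\pi _{\alpha ,S^{n-1}}f=f$ for all $f\in \Gamma _{\alpha }\left( W_{0}^{2,1}\left( \Omega ,Cl_{n}\right) \right) $, which is precisely the statement that $\overline{\pi }_{\alpha ,S^{n-1}}$ is a left inverse of $\pi _{\alpha ,S^{n-1}}$ on this subspace.

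The only genuinely load-bearing point, and the place I would be most careful, is the claim that $\overline{F}_{\partial \Omega }$ annihilates traceless functions, i.e. that the conjugated boundary transform factors through $tr_{\partial \Omega }$ just as $F_{\partial \Omega }$ does; this requires repeating for $\overline{T}_{\Omega }$ and its associated boundary operator the same trace-dependence observation already used for $F_{\partial \Omega }f=F_{\partial \Omega }\left( tr_{\partial \Omega }f\right) $. Everything else is direct substitution, with no estimates or limiting arguments required. An alternative route that sidesteps even that point is to combine part~(1) of the Proposition for compactly supported arguments, $\pi _{\alpha ,S^{n-1}}\Gamma _{\alpha }g=\overline{\Gamma }_{\alpha }g$, with its complexified Clifford conjugate $\overline{\pi }_{\alpha ,S^{n-1}}\overline{\Gamma }_{\alpha }g=\Gamma _{\alpha }g$, both valid for $g\in W_{0}^{2,1}\left( \Omega ,Cl_{n}\right) $, to get $\overline{\pi }_{\alpha ,S^{n-1}}\pi _{\alpha ,S^{n-1}}\Gamma _{\alpha }g=\overline{\pi }_{\alpha ,S^{n-1}}\overline{\Gamma }_{\alpha }g=\Gamma _{\alpha }g$ directly.
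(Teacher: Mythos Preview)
Your proposal is correct and follows essentially the same route as the paper: write $f=\Gamma_{\alpha}g$ with $g\in W_{0}^{2,1}$, use Borel--Pompeiu plus $tr_{\partial\Omega}g=0$ to identify $T_{\Omega}f=g$, conclude that the boundary term in the preceding Proposition vanishes, and read off $\overline{\pi}_{\alpha,S^{n-1}}\pi_{\alpha,S^{n-1}}f=f$. You are in fact slightly more careful than the paper, which writes $F_{\partial\Omega}T_{\Omega}f=0$ while the Proposition actually involves $\overline{F}_{\partial\Omega}$; your explicit remark that $\overline{F}_{\partial\Omega}$ factors through $tr_{\partial\Omega}$ just as $F_{\partial\Omega}$ does is exactly the justification needed there.
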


\begin{proof}
Let $f\in \Gamma _{\alpha }\left( W_{0}^{2,1}\left( \Omega ,Cl_{n}\right)
\right) $. Then there exists a function $g\in W_{0}^{2,1}\left( \Omega
,Cl_{n}\right) $ such that $f=\Gamma _{\alpha }g$ with $tr_{\partial \Omega
}g=0$.

From the Borel-Pompeiu formula, we have 
\begin{equation*}
g=T_{\Omega }\Gamma _{\alpha }g=T_{\Omega }f
\end{equation*}%
and this implies 
\begin{equation*}
tr_{\partial \Omega }T_{\Omega }f=0
\end{equation*}%
and therefore, 
\begin{equation*}
F_{\partial \Omega }T_{\Omega }f=F_{\partial \Omega }\left( tr_{\partial
\Omega }T_{\Omega }f\right) =0
\end{equation*}%
which yields the result.
\end{proof}

\ \ 

\begin{remark}
A similar argument yields that $\pi _{\alpha ,S^{n-1}}$ is right invertible
on $\overline{\Gamma }_{\alpha }\left( W_{0}^{2,1}\left( \Omega
,Cl_{n}\right) \right) $ with right inverse of $\overline{\pi }_{\alpha
,S^{n-1}}$.
\end{remark}

Denote by $\Xi $ the overlap of the function spaces $\overline{\Gamma }%
_{\alpha }\left( W_{0}^{2,1}\left( \Omega ,Cl_{n}\right) \right) \cap \Gamma
_{\alpha }\left( W_{0}^{2,1}\left( \Omega ,Cl_{n}\right) \right) $.

\begin{lemma}
$\Xi $ is non empty.
\end{lemma}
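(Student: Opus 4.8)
The literal statement is immediate: $W_{0}^{2,1}(\Omega ,Cl_{n})$ is a linear space containing $0$, and $\Gamma _{\alpha }$, $\overline{\Gamma }_{\alpha }$ are linear, so $0=\Gamma _{\alpha }0=\overline{\Gamma }_{\alpha }0$ belongs to both images and hence $0\in \Xi $. What seems worth recording, and what I would actually establish, is the sharper assertion that $\Xi $ is a \emph{non-trivial} linear subspace of $L^{2}(\Omega ,Cl_{n})$; non-emptiness then follows a fortiori.

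The plan hinges on one algebraic fact: $\Gamma _{\alpha }$ and $\overline{\Gamma }_{\alpha }$ commute. Since Clifford conjugation sends each $e_{ij}$ to $\overline{e}_{j}\overline{e}_{i}=e_{j}e_{i}=-e_{ij}$ and $\alpha $ to $\overline{\alpha }$, we have $\overline{\Gamma }_{\alpha }=-\Gamma _{\omega }+\overline{\alpha }$, while $\Gamma _{\alpha }=\Gamma _{\omega }+\alpha $; both are degree-one polynomials in the single operator $\Gamma _{\omega }$, and expanding gives
\begin{equation*}
\Gamma _{\alpha }\overline{\Gamma }_{\alpha }=-\Gamma _{\omega }^{2}+(\overline{\alpha }-\alpha )\Gamma _{\omega }+|\alpha |^{2}=\overline{\Gamma }_{\alpha }\Gamma _{\alpha }.
\end{equation*}
Next I would fix any $\phi \in W_{0}^{2,2}(\Omega ,Cl_{n})$ and set $h:=\Gamma _{\alpha }\overline{\Gamma }_{\alpha }\phi =\overline{\Gamma }_{\alpha }\Gamma _{\alpha }\phi $. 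Because $\Gamma _{\alpha }$ and $\overline{\Gamma }_{\alpha }$ are first order (plus a constant), both carry $W_{0}^{2,2}$ into $W_{0}^{2,1}$, so $\overline{\Gamma }_{\alpha }\phi \in W_{0}^{2,1}$ and $\Gamma _{\alpha }\phi \in W_{0}^{2,1}$. Reading $h=\Gamma _{\alpha }(\overline{\Gamma }_{\alpha }\phi )$ exhibits $h\in \Gamma _{\alpha }(W_{0}^{2,1}(\Omega ,Cl_{n}))$, and reading $h=\overline{\Gamma }_{\alpha }(\Gamma _{\alpha }\phi )$ exhibits $h\in \overline{\Gamma }_{\alpha }(W_{0}^{2,1}(\Omega ,Cl_{n}))$; hence $h\in \Xi $, and in fact $\Gamma _{\alpha }\overline{\Gamma }_{\alpha }(W_{0}^{2,2}(\Omega ,Cl_{n}))\subseteq \Xi $.

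What remains, and what I expect to be the only genuine obstacle, is to pick $\phi $ with $h\neq 0$, i.e. to rule out the possibility that $\Gamma _{\alpha }\overline{\Gamma }_{\alpha }$ annihilates every admissible test function. I would dispose of this by choosing $\phi \in C_{c}^{\infty }(\Omega ,Cl_{n})$ identically equal to $1$ on some non-empty open set $U\subseteq \Omega $: on $U$ all derivatives of $\phi $ vanish, so $\Gamma _{\omega }\phi =\Gamma _{\omega }^{2}\phi =0$ there, and the displayed identity yields $h=|\alpha |^{2}\phi =|\alpha |^{2}$ on $U$, which is non-zero because the prefactor $\frac{\pi }{\sigma _{n-1}\sin \pi \alpha }$ occurring in $\Psi _{\alpha }^{n}$ forces $\sin \pi \alpha \neq 0$, hence $\alpha \notin \mathbb{Z}$ and $|\alpha |\neq 0$. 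Therefore $h$ is a non-zero element of $\Xi $, so $\Xi $ is a non-trivial subspace of $L^{2}(\Omega ,Cl_{n})$ and in particular non-empty.
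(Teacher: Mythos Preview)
Your argument is correct and follows the same route as the paper: both show the containment $\Gamma _{\alpha }\overline{\Gamma }_{\alpha }\bigl(W_{0}^{2,2}(\Omega ,Cl_{n})\bigr)\subseteq \Xi $ by writing an element of the left-hand side simultaneously as $\Gamma _{\alpha }(\overline{\Gamma }_{\alpha }\phi )$ and $\overline{\Gamma }_{\alpha }(\Gamma _{\alpha }\phi )$. Your version is more complete in two respects: you make the commutation $\Gamma _{\alpha }\overline{\Gamma }_{\alpha }=\overline{\Gamma }_{\alpha }\Gamma _{\alpha }$ explicit (the paper uses it tacitly), and you supply a non-triviality step via a bump function and the observation $\alpha \notin \mathbb{Z}$, whereas the paper stops at the containment and does not exhibit a non-zero element.
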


\begin{proof}
Here, to show that the set $\Xi $ is non-empty, we need to consider the
Sobolev space$\left( \Gamma _{\alpha }\overline{\Gamma }_{\alpha }\right)
\left( W_{0}^{2,2}\left( \Omega ,Cl_{n}\right) \right) $ where $\overline{%
\Gamma }_{\alpha }$ is the Clifford conjugate of the spherical Dirac
operator of order $\alpha $. Let $f\in \Gamma _{\alpha }\overline{\Gamma }%
_{\alpha }\left( W_{0}^{2,2}\left( \Omega ,Cl_{n}\right) \right) $. Then $%
\exists g\in \Gamma _{\alpha }\left( W_{0}^{2,2}\left( \Omega ,Cl_{n}\right)
\right) $ and $h\in \overline{\Gamma }_{\alpha }\left( W_{0}^{2,2}\left(
\Omega ,Cl_{n}\right) \right) $ $\ni f=\Gamma _{\alpha }g=\overline{\Gamma }%
_{\alpha }h$.

That is $f\in \Xi $. In fact the argument shows that $\Gamma _{\alpha }%
\overline{\Gamma }_{\alpha }\left( W_{0}^{2,2}\left( \Omega ,Cl_{n}\right)
\right) \subseteq \Xi .$
\end{proof}

\ \ \ 

\begin{proposition}
On the space $\Xi $, $\pi _{\alpha ,S^{n-1}}$ is invertible with inverse of $%
\overline{\pi }_{\alpha ,S^{n-1}}.$
\end{proposition}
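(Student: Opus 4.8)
The goal is to show that on $\Xi = \overline{\Gamma}_{\alpha}\left(W_0^{2,1}(\Omega,Cl_n)\right) \cap \Gamma_{\alpha}\left(W_0^{2,1}(\Omega,Cl_n)\right)$, the operator $\pi_{\alpha,S^{n-1}}$ is invertible with two-sided inverse $\overline{\pi}_{\alpha,S^{n-1}}$. By the Corollary preceding the statement (left invertibility on $\Gamma_{\alpha}(W_0^{2,1})$) and the Remark following it (right invertibility on $\overline{\Gamma}_{\alpha}(W_0^{2,1})$), the key observation is simply that $\Xi$ lies inside both of those spaces, so both one-sided identities are available simultaneously on $\Xi$; a left inverse and a right inverse that agree force genuine invertibility. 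So the structure of the proof is: (i) quote the two one-sided results; (ii) restrict attention to $f \in \Xi$ and note that $f$ belongs to each of the two domains; (iii) conclude $\overline{\pi}_{\alpha,S^{n-1}}\pi_{\alpha,S^{n-1}}f = f$ and $\pi_{\alpha,S^{n-1}}\overline{\pi}_{\alpha,S^{n-1}}f = f$; (iv) invoke the Lemma that $\Xi$ is non-empty so the statement is not vacuous.

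First I would fix $f \in \Xi$. Since $f \in \Gamma_{\alpha}(W_0^{2,1}(\Omega,Cl_n))$, the Corollary on left invertibility gives $\overline{\pi}_{\alpha,S^{n-1}}\pi_{\alpha,S^{n-1}}f = f$; the mechanism there is that writing $f = \Gamma_{\alpha}g$ with $tr_{\partial\Omega}g=0$ forces $tr_{\partial\Omega}T_{\Omega}f = 0$ (by Borel–Pompeiu, $g = T_{\Omega}\Gamma_{\alpha}g = T_{\Omega}f$), hence $F_{\partial\Omega}T_{\Omega}f = 0$, and the Proposition $\overline{\pi}_{\alpha,S^{n-1}}\pi_{\alpha,S^{n-1}}f + \Gamma_{\alpha}\overline{F}_{\partial\Omega}T_{\Omega}f = f$ collapses to the identity. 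Then, since $f \in \overline{\Gamma}_{\alpha}(W_0^{2,1}(\Omega,Cl_n))$ as well, the symmetric Remark gives $\pi_{\alpha,S^{n-1}}\overline{\pi}_{\alpha,S^{n-1}}f = f$ by taking the complexified Clifford conjugate of the same argument (the Corollary dualizing that Proposition supplies the companion identity $\pi_{\alpha,S^{n-1}}\overline{\pi}_{\alpha,S^{n-1}}f + \overline{\Gamma}_{\alpha}F_{\partial\Omega}\overline{T}_{\Omega}f = f$, and the boundary term vanishes for the same reason with bars).

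I should also check that $\pi_{\alpha,S^{n-1}}$ actually maps $\Xi$ into itself, or at least that both composition identities make sense on $\Xi$; here the mapping property $\pi_{\alpha,S^{n-1}}\colon \overline{\Gamma}_{\alpha}(W_0^{2,1}) \to \Gamma_{\alpha}(W_0^{2,1})$ recorded in the earlier Remark (and its conjugate for $\overline{\pi}_{\alpha,S^{n-1}}$) handles this: $\pi_{\alpha,S^{n-1}}$ carries $\Xi$ into $\Gamma_{\alpha}(W_0^{2,1})$ and, symmetrically, $\overline{\pi}_{\alpha,S^{n-1}}$ carries $\Xi$ into $\overline{\Gamma}_{\alpha}(W_0^{2,1})$, so the two-sided identities are internally consistent. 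Finally I would cite the preceding Lemma to note $\Xi \supseteq \Gamma_{\alpha}\overline{\Gamma}_{\alpha}(W_0^{2,2}(\Omega,Cl_n)) \neq \varnothing$, so the claim has content.

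**Main obstacle.** The genuine content is almost entirely bookkeeping: the only thing one must be slightly careful about is that a left inverse on one space and a right inverse on another only combine into a true inverse on the intersection, and that one needs $\pi_{\alpha,S^{n-1}}$ and $\overline{\pi}_{\alpha,S^{n-1}}$ to respect that intersection in the right direction so that "$\overline{\pi}_{\alpha,S^{n-1}}$ is both a left and a right inverse of $\pi_{\alpha,S^{n-1}}$ on $\Xi$" is a meaningful statement rather than a formal one. Once the mapping properties from the preceding Remark are in hand this is immediate, so I do not expect any real difficulty; the proof is short.
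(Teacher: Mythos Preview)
Your proposal is correct and matches the paper's approach exactly: the paper states this Proposition without an explicit proof, treating it as an immediate consequence of the preceding Corollary (left invertibility of $\pi_{\alpha,S^{n-1}}$ on $\Gamma_{\alpha}(W_{0}^{2,1})$), the subsequent Remark (right invertibility on $\overline{\Gamma}_{\alpha}(W_{0}^{2,1})$), and the Lemma that $\Xi$ is non-empty. Your write-up simply unpacks that implication, and the bookkeeping you flag about mapping properties is more care than the paper itself takes.
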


When we consider global functions over the sphere we may have better results
on invertibility of $\pi _{\alpha ,S^{n-1}}$.

\begin{proposition}
On the space $C_{0}^{\infty }\left( S^{n-1},Cl_{n}\right) $, we have 
\begin{equation*}
\overline{\pi }_{\alpha ,S^{n-1}}\pi _{\alpha ,S^{n-1}}=\pi _{\alpha
,S^{n-1}}\overline{\pi }_{\alpha ,S^{n-1}}
\end{equation*}
\end{proposition}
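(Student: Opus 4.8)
### Proof proposal for the final Proposition

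The plan is to establish the commutation $\overline{\pi}_{\alpha,S^{n-1}}\pi_{\alpha,S^{n-1}} = \pi_{\alpha,S^{n-1}}\overline{\pi}_{\alpha,S^{n-1}}$ on $C_0^\infty(S^{n-1},Cl_n)$ by exploiting the fact that on the whole sphere the boundary is empty, so that all boundary integrals $F_{\partial\Omega}$ and $\overline{F}_{\partial\Omega}$ vanish. First I would take $f\in C_0^\infty(S^{n-1},Cl_n)$ and invoke the earlier Proposition giving $\overline{\pi}_{\alpha,S^{n-1}}\pi_{\alpha,S^{n-1}}f + \Gamma_\alpha\overline{F}_{\partial\Omega}T_\Omega f = f$, together with its complexified-conjugate corollary $\pi_{\alpha,S^{n-1}}\overline{\pi}_{\alpha,S^{n-1}}f + \overline{\Gamma}_\alpha F_{\partial\Omega}\overline{T}_\Omega f = f$. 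Since $\Omega = S^{n-1}$ is itself a closed hypersurface, $\partial\Omega = \partial\partial(\bullet) = \varnothing$ (this is exactly the topological fact used in the Representation of a Global Function), so both $F_{\partial\Omega}\overline{T}_\Omega f = 0$ and $\overline{F}_{\partial\Omega}T_\Omega f = 0$.

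With both boundary terms gone, the two identities collapse to
\begin{equation*}
\overline{\pi}_{\alpha,S^{n-1}}\pi_{\alpha,S^{n-1}}f = f = \pi_{\alpha,S^{n-1}}\overline{\pi}_{\alpha,S^{n-1}}f,
\end{equation*}
which gives the claimed equality (and in fact shows that on $C_0^\infty(S^{n-1},Cl_n)$ each of $\pi_{\alpha,S^{n-1}}$ and $\overline{\pi}_{\alpha,S^{n-1}}$ is a two-sided inverse of the other). I would then remark that the density of $C_0^\infty(S^{n-1},Cl_n)$ in the relevant Sobolev spaces, combined with the continuity of $T_\Omega$, $\overline{T}_\Omega$, $\Gamma_\alpha$, $\overline{\Gamma}_\alpha$ between the Sobolev scales stated in the preliminaries, lets one extend the identity by the usual approximation argument if a version on larger spaces is wanted.

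The one genuine point to be careful about is the vanishing of the boundary operators: one must confirm that when $\Omega$ is the entire sphere the operator $F_{\partial\Omega}$ (and $\overline{F}_{\partial\Omega}$) really is the zero operator and not merely formally undefined — this is the step where the ``global domain'' hypothesis does all the work, exactly as in the proof of the Representation of a Global Function proposition. A secondary bookkeeping issue is making sure the two ingredient identities are being applied with $\Omega = S^{n-1}$ consistently, so that the $T_\Omega$ appearing inside $\pi_{\alpha,S^{n-1}} = \overline{\Gamma}_\alpha T_\Omega$ and inside the boundary correction term is the same global Teodorescu transform; once that is checked, the argument is immediate and no computation with the Gegenbauer kernel $\Psi_\alpha^n$ is needed.
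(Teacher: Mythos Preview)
The paper states this proposition without giving an explicit proof, but your approach is precisely the one the paper's structure implies: apply the earlier identity $\overline{\pi}_{\alpha,S^{n-1}}\pi_{\alpha,S^{n-1}}f + \Gamma_\alpha\overline{F}_{\partial\Omega}T_\Omega f = f$ and its conjugate corollary, then use $\partial S^{n-1}=\varnothing$ (exactly as in the Representation of a Global Function) to kill both boundary terms, yielding $\overline{\pi}_{\alpha,S^{n-1}}\pi_{\alpha,S^{n-1}}f = f = \pi_{\alpha,S^{n-1}}\overline{\pi}_{\alpha,S^{n-1}}f$. Your remark on extending by density to $L^2(S^{n-1},Cl_n)$ also matches the paper's subsequent corollary verbatim.
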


From denseness arguments, and boundedness of the $\pi _{\alpha ,S^{n-1}}$ on 
$L^{2}\left( \Omega ,Cl_{n}\right) ,$ the above result can be done over a
larger domain as:

\begin{corollary}
On the space $L^{2}\left( S^{n-1},Cl_{n}\right) $, we have 
\begin{equation*}
\overline{\pi }_{\alpha ,S^{n-1}}\pi _{\alpha ,S^{n-1}}=\pi _{\alpha
,S^{n-1}}\overline{\pi }_{\alpha ,S^{n-1}}
\end{equation*}
\end{corollary}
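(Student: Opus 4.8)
The plan is to read off the identity as an immediate consequence of the two formulas recorded just above, namely the Proposition
\begin{equation*}
\overline{\pi}_{\alpha,S^{n-1}}\pi_{\alpha,S^{n-1}}f+\Gamma_\alpha\overline{F}_{\partial\Omega}T_\Omega f=f
\end{equation*}
and its Corollary
\begin{equation*}
\pi_{\alpha,S^{n-1}}\overline{\pi}_{\alpha,S^{n-1}}f+\overline{\Gamma}_\alpha F_{\partial\Omega}\overline{T}_\Omega f=f,
\end{equation*}
combined with the topological fact already used in the proof of the representation theorem for global functions: when $\Omega=S^{n-1}$ the domain is itself a closed hypersurface, so $\partial\Omega=\partial\partial(\,\cdot\,)=\varnothing$ and every integral over $\partial\Omega$ is zero.

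First I would specialize both displayed identities to $\Omega=S^{n-1}$ and to $f\in C_0^\infty(S^{n-1},Cl_n)$, observing that on the compact manifold $S^{n-1}$ this coincides with the full space $C^\infty(S^{n-1},Cl_n)$ and in particular lies inside every $W^{p,k}(S^{n-1},Cl_n)$, so the Proposition is applicable. Since $\partial S^{n-1}=\varnothing$, both $F_{\partial\Omega}$ and $\overline{F}_{\partial\Omega}$ act as the zero operator, so the boundary summand drops out of each identity and what remains is
\begin{equation*}
\overline{\pi}_{\alpha,S^{n-1}}\pi_{\alpha,S^{n-1}}f=f=\pi_{\alpha,S^{n-1}}\overline{\pi}_{\alpha,S^{n-1}}f
\end{equation*}
for every $f\in C_0^\infty(S^{n-1},Cl_n)$. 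Equating the outer members then gives $\overline{\pi}_{\alpha,S^{n-1}}\pi_{\alpha,S^{n-1}}=\pi_{\alpha,S^{n-1}}\overline{\pi}_{\alpha,S^{n-1}}$ on that space, which is the assertion. (In fact this shows something slightly stronger: each composite is the identity, so on $C_0^\infty(S^{n-1},Cl_n)$ the operator $\pi_{\alpha,S^{n-1}}$ is two-sided invertible with inverse $\overline{\pi}_{\alpha,S^{n-1}}$, which also explains why the ensuing Corollary over $L^2(S^{n-1},Cl_n)$ follows by density of $C_0^\infty$ together with boundedness of $\pi_{\alpha,S^{n-1}}$.)

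The hard part will not be the algebra but making rigorous the claim that the boundary operators truly vanish when the domain is the whole sphere — that is, that the Borel-Pompeiu / Stokes computation behind the Proposition passes to the limiting case $\Omega=S^{n-1}$ with no surviving boundary term. This is exactly the statement $\partial\partial=\varnothing$ invoked in the global-representation proposition; once it is granted, no estimates or further approximation arguments are needed and the conclusion for $C_0^\infty(S^{n-1},Cl_n)$ is immediate.
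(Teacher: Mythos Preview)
Your argument is correct and matches the paper's approach: the paper derives the $L^{2}(S^{n-1},Cl_n)$ statement from the preceding Proposition on $C_0^\infty(S^{n-1},Cl_n)$ by exactly the density-plus-boundedness extension you describe in your parenthetical remark. You go a bit further than the paper by also supplying a proof of that preceding Proposition (via the vanishing of the boundary operators when $\Omega=S^{n-1}$, using $\partial S^{n-1}=\varnothing$), which the paper merely states; this extra step is consistent with the paper's earlier reasoning in the global-representation proposition, so your overall route is the intended one.
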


With respect to the Clifford valued inner product given by $\left( \text{\ref%
{inp}}\right) $ on the Hilbert space $L^{2}\left( \Omega ,Cl_{n}\right) $,

we take $-\overline{T}_{\Omega }$ as the adjoint $T_{\Omega }^{\ast }$ of $%
T_{\Omega }$ and $-\overline{\Gamma }_{\alpha }$ as $\Gamma _{\alpha }^{\ast
},$ adjoint of $\Gamma _{\alpha }$.

\ \ 

Therefore for $f,g\in W_{0}^{2,k}\left( \Omega ,Cl_{n}\right) $, we have

\begin{eqnarray*}
\left\langle \Gamma _{\alpha }f,g\right\rangle _{\Omega }
&=&\dint\limits_{\Omega }\overline{\Gamma _{\alpha }f}gd\Omega \\
&=&\dint\limits_{\Omega }\overline{\Gamma }_{\alpha }\overline{f}gd\Omega \\
&=&-\dint\limits_{\Omega }\overline{f}\overline{\Gamma }_{\alpha }gd\Omega \\
&=&-\left\langle f,\overline{\Gamma }_{\alpha }g\right\rangle _{\Omega }
\end{eqnarray*}

and 
\begin{eqnarray*}
\ \left\langle T_{\Omega }f,g\right\rangle _{\Omega }
&=&\dint\limits_{\Omega }\overline{T_{\Omega }f}gd_{\Omega _{\upsilon }} \\
&=&\dint\limits_{\Omega }\overline{T}_{\Omega }\overline{f}gd_{\Omega
_{\upsilon }}
\end{eqnarray*}

\begin{equation*}
=\dint\limits_{\Omega }\left( \int_{\Omega }\overline{\Psi _{\alpha }^{n}}%
\left( \omega ,\upsilon \right) \overline{f}(\omega )d\Omega _{\omega
}\right) g\left( \upsilon \right) d\Omega _{\upsilon }
\end{equation*}

\begin{eqnarray*}
&=&\dint\limits_{\Omega \times \Omega }\overline{\Psi _{\alpha }^{n}}\left(
\omega ,\upsilon \right) \overline{f}(\omega )g\left( \upsilon \right)
d\Omega _{\omega }d\Omega _{\upsilon } \\
&=&-\dint\limits_{\Omega }\overline{f}\left( \omega \right) d\Omega _{\omega
}\dint\limits_{\Omega }\overline{\Psi _{\alpha }^{n}}\left( \omega ,\upsilon
\right) g\left( \upsilon \right) d\Omega _{\upsilon }
\end{eqnarray*}

\begin{eqnarray*}
&=&-\dint\limits_{\Omega }\overline{f}\left( \omega \right) \left( \overline{%
T}_{\Omega }g\left( \upsilon \right) d\Omega _{\upsilon }\right) d\Omega
_{\omega } \\
&=&-\left\langle f,\overline{T}_{\Omega }g\right\rangle _{\Omega }.
\end{eqnarray*}

\begin{lemma}
$\pi _{\alpha }^{\ast }=\overline{T}_{\Omega }\Gamma _{\alpha }$
\end{lemma}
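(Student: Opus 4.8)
The plan is to compute the adjoint of $\pi_{\alpha,S^{n-1}}=\overline{\Gamma}_\alpha T_\Omega$ by the usual rule $(AB)^\ast=B^\ast A^\ast$, using the two adjoint identifications set up just before the statement, namely $T_\Omega^\ast=-\overline{T}_\Omega$ and $\Gamma_\alpha^\ast=-\overline{\Gamma}_\alpha$ with respect to the Clifford-valued inner product $(\ref{inp})$ on $L^2(\Omega,Cl_n)$. First I would record the auxiliary identity $\bigl(\overline{\Gamma}_\alpha\bigr)^\ast=-\Gamma_\alpha$: since $\Gamma_\alpha^\ast=-\overline{\Gamma}_\alpha$, applying $\ast$ to both sides and using $(\Gamma_\alpha^\ast)^\ast=\Gamma_\alpha$ gives $\Gamma_\alpha=-\bigl(\overline{\Gamma}_\alpha\bigr)^\ast$, hence $\bigl(\overline{\Gamma}_\alpha\bigr)^\ast=-\Gamma_\alpha$. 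This is the same computation already carried out in the displayed chain above, now read with $f,g$ in the reverse roles, so nothing new is needed.

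Then I would simply compose:
\begin{equation*}
\pi_{\alpha,S^{n-1}}^\ast=\bigl(\overline{\Gamma}_\alpha T_\Omega\bigr)^\ast=T_\Omega^\ast\,\bigl(\overline{\Gamma}_\alpha\bigr)^\ast=\bigl(-\overline{T}_\Omega\bigr)\bigl(-\Gamma_\alpha\bigr)=\overline{T}_\Omega\,\Gamma_\alpha,
\end{equation*}
which is exactly the claimed identity. To make the use of $(AB)^\ast=B^\ast A^\ast$ legitimate I would note that, by the mapping properties recorded earlier, $T_\Omega$ is bounded on $W^{2,k}(\Omega,Cl_n)$ (indeed it gains a derivative) and $\overline{\Gamma}_\alpha$ acts boundedly between the relevant Sobolev scales, so the composition $\overline{\Gamma}_\alpha T_\Omega=\pi_{\alpha,S^{n-1}}$ is a bounded operator on $L^2(\Omega,Cl_n)$ and its Hilbert-space adjoint factors in the stated way; alternatively one verifies the identity directly by writing out $\langle\pi_{\alpha,S^{n-1}}f,g\rangle_\Omega=\langle\overline{\Gamma}_\alpha T_\Omega f,g\rangle_\Omega$ and moving $\overline{\Gamma}_\alpha$ and then $T_\Omega$ across the inner product using the two computations displayed above.

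The only genuine point requiring care—and the step I expect to be the main obstacle—is bookkeeping of the signs and of the interplay between Clifford conjugation and Hilbert-space adjunction: one must be careful that $\overline{\Gamma}_\alpha$ here denotes the Clifford conjugate of the operator $\Gamma_\alpha$ (acting coefficientwise on $e_A$ and on the wedge structure), not its adjoint, so that $\Gamma_\alpha^\ast=-\overline{\Gamma}_\alpha$ and $\bigl(\overline{\Gamma}_\alpha\bigr)^\ast=-\Gamma_\alpha$ are two distinct facts both needed above; similarly for $T_\Omega$ versus $\overline{T}_\Omega$. Once the conventions are pinned down exactly as in the preceding paragraph of the paper, the proof is the three-line computation displayed, with no further analytic input beyond the boundedness already invoked.
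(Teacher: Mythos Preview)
Your proposal is correct and is essentially the paper's argument: the paper computes $\langle \pi_{\alpha,S^{n-1}}f,g\rangle=\langle\overline{\Gamma}_\alpha T_\Omega f,g\rangle=-\langle T_\Omega f,\Gamma_\alpha g\rangle=\langle f,\overline{T}_\Omega\Gamma_\alpha g\rangle$, which is exactly the ``direct'' verification you describe at the end of your first paragraph. Your packaging via $(AB)^\ast=B^\ast A^\ast$ together with $T_\Omega^\ast=-\overline{T}_\Omega$ and $(\overline{\Gamma}_\alpha)^\ast=-\Gamma_\alpha$ is just a reformulation of the same two sign-tracking steps.
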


\begin{proof}
\begin{eqnarray*}
\left\langle \pi _{\alpha ,S^{n-1}}f,g\right\rangle &=&\left\langle 
\overline{\Gamma }_{\alpha }T_{\Omega }f,g\right\rangle \\
&=&-\left\langle T_{\Omega }f,\Gamma _{\alpha }g\right\rangle \\
&=&\left\langle f,\overline{T}_{\Omega }\Gamma _{\alpha }g\right\rangle \\
&=&\left\langle f,\pi _{\alpha ,S^{n-1}}^{\ast }\right\rangle
\end{eqnarray*}%
From this we can see that the adjoint $\pi _{\alpha ,S^{n-1}}^{\ast }$of the 
$\pi _{\alpha ,S^{n-1}}$ operator is $\overline{T}_{\Omega }\Gamma _{\alpha
} $
\end{proof}

\begin{proposition}
On $W_{0}^{2,k}\left( \Omega ,Cl_{n}\right) $,$k=0,1,2,...$, we have 
\begin{equation*}
\pi _{\alpha ,S^{n-1}}^{\ast }\pi _{\alpha ,S^{n-1}}=I_{S^{n-1}}.
\end{equation*}
\end{proposition}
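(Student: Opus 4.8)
The goal is to show $\pi_{\alpha,S^{n-1}}^{*}\,\pi_{\alpha,S^{n-1}} = I$ on $W_0^{2,k}(\Omega,Cl_n)$. The natural route is to assemble this from the two adjoint identities already established in the excerpt, namely $\pi_{\alpha,S^{n-1}} = \overline{\Gamma}_{\alpha}T_{\Omega}$ (the definition) together with the computed adjoint $\pi_{\alpha,S^{n-1}}^{*} = \overline{T}_{\Omega}\Gamma_{\alpha}$ (the preceding lemma), and then to feed in the compact-support identities from Proposition (Representation of a Function with Compact Support) and the boundary-vanishing statements from the earlier Proposition on $W_0^{p,k}$. Concretely, for $f \in W_0^{2,k}(\Omega,Cl_n)$ I would write
\begin{equation*}
\pi_{\alpha,S^{n-1}}^{*}\,\pi_{\alpha,S^{n-1}}f = \overline{T}_{\Omega}\,\Gamma_{\alpha}\,\overline{\Gamma}_{\alpha}\,T_{\Omega}f,
\end{equation*}
and the task reduces to simplifying the inner block $\Gamma_{\alpha}\overline{\Gamma}_{\alpha}T_{\Omega}f$.

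First I would use the fact, recorded in item (1) of the Proposition of ``Classical Analogous Results'', that $\Gamma_{\alpha}\pi_{\alpha,S^{n-1}} = \overline{\Gamma}_{\alpha}$, i.e. $\Gamma_{\alpha}\overline{\Gamma}_{\alpha}T_{\Omega} = \overline{\Gamma}_{\alpha}\cdot(\text{something})$; more directly, since $\pi_{\alpha,S^{n-1}} = \overline{\Gamma}_{\alpha}T_{\Omega}$, item (1) gives $\Gamma_{\alpha}\overline{\Gamma}_{\alpha}T_{\Omega}f = \overline{\Gamma}_{\alpha}f$ whenever the hypotheses of that proposition apply. Hence
\begin{equation*}
\pi_{\alpha,S^{n-1}}^{*}\,\pi_{\alpha,S^{n-1}}f = \overline{T}_{\Omega}\,\overline{\Gamma}_{\alpha}f.
\end{equation*}
Then I would invoke the Clifford-conjugate version of Proposition (Representation of a Function with Compact Support): taking conjugates in the identity $f = T_{\Omega}(\Gamma_{\alpha}f)$ valid for compactly supported $f$ (which holds on $W_0^{2,k}$ by density, since the boundary integral $F_{\partial\Omega}$ vanishes on functions with zero trace) yields $\overline{f} = \overline{T}_{\Omega}(\overline{\Gamma}_{\alpha}\overline{f})$, equivalently $\overline{T}_{\Omega}\,\overline{\Gamma}_{\alpha}g = g$ for $g \in W_0^{2,k}(\Omega,Cl_n)$. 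Applying this with $g = f$ finishes the computation: $\pi_{\alpha,S^{n-1}}^{*}\,\pi_{\alpha,S^{n-1}}f = f$, which is $I_{S^{n-1}}$ as claimed.

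The main obstacle is bookkeeping about which identities are genuinely valid on $W_0^{2,k}$ rather than only on the $C^1$ level, and in particular making sure the order-one differential identity $\Gamma_{\alpha}\overline{\Gamma}_{\alpha}T_{\Omega} = \overline{\Gamma}_{\alpha}$ is applied consistently with the convention $\overline{\Gamma}_{\alpha}T_{\Omega} = \pi_{\alpha,S^{n-1}}$ and the factorization remark $\Delta_{\alpha} = \Gamma_{\alpha}\Gamma_{\beta}$ with $\alpha + \beta = -n+1$ — one should check that $\overline{\Gamma}_{\alpha}$ plays the role of $\Gamma_{\beta}$ here (up to the Clifford conjugation built into the inner product), so that $\Gamma_{\alpha}\overline{\Gamma}_{\alpha}$ is essentially the spherical Laplacian and $T_{\Omega}$ inverts the relevant factor. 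Once the conventions are pinned down, the argument is a two-line concatenation of the right-inverse property of $T_{\Omega}$, its conjugate, and the vanishing of $F_{\partial\Omega}$ on $W_0^{2,k}$; the density/continuity passage from $C_c^{\infty}$ to $W_0^{2,k}$ is routine given the stated mapping properties of $T_{\Omega}$ and boundedness of $\pi_{\alpha,S^{n-1}}$ on $L^2$.
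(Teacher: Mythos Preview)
Your proposal is correct and follows essentially the same route as the paper: expand $\pi_{\alpha,S^{n-1}}^{\ast}\pi_{\alpha,S^{n-1}}f=\overline{T}_{\Omega}\Gamma_{\alpha}\overline{\Gamma}_{\alpha}T_{\Omega}f$ via the lemma $\pi_{\alpha,S^{n-1}}^{\ast}=\overline{T}_{\Omega}\Gamma_{\alpha}$, then collapse using the right/left inverse properties of $T_{\Omega}$ (and its conjugate) on functions with vanishing trace. The only cosmetic difference is that the paper first commutes $\Gamma_{\alpha}\overline{\Gamma}_{\alpha}$ to $\overline{\Gamma}_{\alpha}\Gamma_{\alpha}$ and then applies $\Gamma_{\alpha}T_{\Omega}=I$ and $\overline{T}_{\Omega}\overline{\Gamma}_{\alpha}=I$ in that order, whereas you invoke item~(1) of the Classical Analogous Results ($\Gamma_{\alpha}\pi_{\alpha,S^{n-1}}=\overline{\Gamma}_{\alpha}$) directly, which sidesteps the commutation; in both cases the last step is $\overline{T}_{\Omega}\overline{\Gamma}_{\alpha}f=f$ on $W_{0}^{2,k}$.
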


\begin{proof}
For $f\in W_{0}^{2,k}\left( \Omega ,Cl_{n}\right) $, 
\begin{equation*}
\pi _{\alpha ,S^{n-1}}^{\ast }\pi _{\alpha ,S^{n-1}}f=\overline{T}_{\Omega
}\Gamma _{\alpha }\pi _{\alpha ,S^{n-1}}f
\end{equation*}

\begin{eqnarray*}
&=&\overline{T}_{\Omega }\Gamma _{\alpha }\overline{\Gamma }_{\alpha
}T_{\Omega }f \\
&=&\overline{T}_{\Omega }\overline{\Gamma }_{\alpha }\Gamma _{\alpha
}T_{\Omega }f \\
&=&\overline{I}_{\Omega }I_{\Omega }f=f.
\end{eqnarray*}%
This is because for a function $f\in W^{2,k}\left( \Omega ,Cl_{n}\right) $
whose trace is zero over the boundary, $\Gamma _{\alpha }$ is both the right
and left inverse of the $T_{\Omega }$ operator.
\end{proof}

\begin{corollary}
On the Hilbert space $L^{2}\left( \Omega ,Cl_{n}\right) $, 
\begin{equation*}
\pi _{\alpha ,S^{n-1}}^{\ast }=\overline{\pi }_{\alpha ,S^{n-1}}.
\end{equation*}
\end{corollary}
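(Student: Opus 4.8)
The plan is to start from the previous lemma, which already computes the adjoint as $\pi_{\alpha ,S^{n-1}}^{\ast}=\overline{T}_{\Omega}\Gamma_{\alpha}$, and to identify this with the Clifford conjugate operator $\overline{\pi}_{\alpha ,S^{n-1}}=\Gamma_{\alpha}\overline{T}_{\Omega}$ from its definition. Since the two expressions differ only in the order of $\Gamma_{\alpha}$ and $\overline{T}_{\Omega}$, the whole matter reduces to a commutation statement, which I would establish by exhibiting $\overline{T}_{\Omega}$ as a two-sided inverse of $\overline{\Gamma}_{\alpha}$ and then invoking the factorization of the spherical Laplacian recorded in the remark of Section~3.

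Concretely, I would first note that, by the Proposition stating $\Gamma_{\alpha}T_{\Omega}=I$ together with the Borel--Pompeiu formula (the boundary term $F_{\partial\Omega}$ dropping on $S^{n-1}$, where $\partial\Omega=\varnothing$, or on zero-trace functions in general), one has $\Gamma_{\alpha}T_{\Omega}=T_{\Omega}\Gamma_{\alpha}=I$; taking Clifford conjugates of both identities, exactly as in the proof that $\pi_{\alpha ,S^{n-1}}^{\ast}\pi_{\alpha ,S^{n-1}}=I_{S^{n-1}}$, gives $\overline{\Gamma}_{\alpha}\overline{T}_{\Omega}=\overline{T}_{\Omega}\overline{\Gamma}_{\alpha}=I$, so $\overline{T}_{\Omega}=\overline{\Gamma}_{\alpha}^{-1}$ on that space. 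Next, the remark's factorization $\Delta_{\alpha}=\Gamma_{\alpha}\Gamma_{\beta}=\Gamma_{\beta}\Gamma_{\alpha}$ with $\alpha+\beta=-n+1$ amounts precisely to the commutation $\Gamma_{\alpha}\overline{\Gamma}_{\alpha}=\overline{\Gamma}_{\alpha}\Gamma_{\alpha}$ that was already used in that earlier proof, so $\Gamma_{\alpha}$ commutes with $\overline{\Gamma}_{\alpha}$ and hence with its inverse $\overline{T}_{\Omega}$. Therefore
\[
\pi_{\alpha ,S^{n-1}}^{\ast}=\overline{T}_{\Omega}\Gamma_{\alpha}=\Gamma_{\alpha}\overline{T}_{\Omega}=\overline{\pi}_{\alpha ,S^{n-1}}
\]
on the dense subspace in question, and the identity extends to all of $L^{2}(\Omega ,Cl_{n})$ by continuity, since $\pi_{\alpha ,S^{n-1}}$ (hence $\pi_{\alpha ,S^{n-1}}^{\ast}$) and $\overline{\pi}_{\alpha ,S^{n-1}}$ are bounded on $L^{2}$.

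An alternative route, which I would keep in reserve, is to argue by uniqueness of inverses: the preceding proposition gives $\pi_{\alpha ,S^{n-1}}^{\ast}\pi_{\alpha ,S^{n-1}}=I_{S^{n-1}}$, while the proposition $\overline{\pi}_{\alpha ,S^{n-1}}\pi_{\alpha ,S^{n-1}}f+\Gamma_{\alpha}\overline{F}_{\partial\Omega}T_{\Omega}f=f$ and its corollary give $\overline{\pi}_{\alpha ,S^{n-1}}\pi_{\alpha ,S^{n-1}}=\pi_{\alpha ,S^{n-1}}\overline{\pi}_{\alpha ,S^{n-1}}=I$ once the boundary terms vanish, so $(\pi_{\alpha ,S^{n-1}}^{\ast}-\overline{\pi}_{\alpha ,S^{n-1}})\pi_{\alpha ,S^{n-1}}=0$ and composing on the right with the two-sided inverse $\overline{\pi}_{\alpha ,S^{n-1}}$ forces $\pi_{\alpha ,S^{n-1}}^{\ast}=\overline{\pi}_{\alpha ,S^{n-1}}$. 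The main obstacle in either route is the bookkeeping of boundary contributions: the composition identities $T_{\Omega}\Gamma_{\alpha}=I$ and $\overline{T}_{\Omega}\overline{\Gamma}_{\alpha}=I$ fail on a general bounded $\Omega$ by an $F_{\partial\Omega}$-term, so the argument is genuinely clean only on $S^{n-1}$ or on zero-trace subspaces, and the passage to all of $L^{2}(\Omega ,Cl_{n})$ must be justified by density together with the $L^{2}$-boundedness of $\pi_{\alpha ,S^{n-1}}$ and $\overline{\pi}_{\alpha ,S^{n-1}}$; the secondary point requiring care is checking that $\Gamma_{\alpha}$ and $\overline{\Gamma}_{\alpha}$ actually commute, rather than merely that their product reproduces the spherical Laplacian from one side.
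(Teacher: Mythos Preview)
The paper states this corollary without proof, treating it as an immediate consequence of the preceding Proposition $\pi_{\alpha,S^{n-1}}^{\ast}\pi_{\alpha,S^{n-1}}=I_{S^{n-1}}$; your ``alternative route'' via uniqueness of inverses is precisely what that placement suggests, so on that count you have recovered the intended argument. Your main route is also sound in outline, but one attribution deserves correction: the Remark's factorization $\Delta_{\alpha}=\Gamma_{\alpha}\Gamma_{\beta}=\Gamma_{\beta}\Gamma_{\alpha}$ with $\alpha+\beta=-n+1$ is not literally the commutation $\Gamma_{\alpha}\overline{\Gamma}_{\alpha}=\overline{\Gamma}_{\alpha}\Gamma_{\alpha}$, since $\overline{\Gamma}_{\alpha}=-\Gamma_{\omega}+\overline{\alpha}$ whereas $\Gamma_{\beta}=\Gamma_{\omega}+\beta$, and these differ in the sign of the $\Gamma_{\omega}$ part. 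The commutation $[\Gamma_{\alpha},\overline{\Gamma}_{\alpha}]=0$ is nonetheless true for the elementary reason that both operators have the form $\pm\Gamma_{\omega}+\text{scalar}$, and the paper itself silently relies on it in the line $\overline{T}_{\Omega}\Gamma_{\alpha}\overline{\Gamma}_{\alpha}T_{\Omega}f=\overline{T}_{\Omega}\overline{\Gamma}_{\alpha}\Gamma_{\alpha}T_{\Omega}f$ of the proof of $\pi^{\ast}\pi=I$; so your conclusion stands once the justification is rerouted to this direct check. Your explicit handling of boundary terms and the density extension to $L^{2}$ is more careful than anything the paper provides.
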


From the above corollary we get that for $f,g\in L^{2}\left( \Omega
,Cl_{n}\right) $,

\begin{eqnarray*}
\left\langle \pi _{\alpha ,S^{n-1}}f,\pi _{\alpha ,S^{n-1}}g\right\rangle
_{\Omega } &=&\left\langle f,\pi _{\alpha ,S^{n-1}}^{\ast }\pi _{\alpha
,S^{n-1}}g\right\rangle _{\Omega } \\
&=&\left\langle f,g\right\rangle _{\Omega }.
\end{eqnarray*}%
By taking $f=g$, we have an isometry property for the $\pi _{\alpha
,S^{n-1}} $ operator over $L^{2}\left( \Omega ,Cl_{n}\right) :$

\begin{proposition}
$\parallel \pi _{\alpha ,S^{n-1}}f\parallel =\parallel f\parallel $ for $%
f\in W^{2,0}\left( \Omega ,Cl_{n}\right) $, i.e., $\pi _{\alpha ,S^{n-1}}$
is norm preserving over the Hilbert space.
\end{proposition}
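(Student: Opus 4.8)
The plan is to derive the norm-preserving property as an immediate consequence of the identity $\pi_{\alpha,S^{n-1}}^{\ast}\pi_{\alpha,S^{n-1}}=I_{S^{n-1}}$ established in the preceding proposition, together with the defining relation of the adjoint with respect to the Clifford-valued inner product $(\ref{inp})$. First I would recall that for $f\in W^{2,0}\left(\Omega,Cl_{n}\right)=L^{2}\left(\Omega,Cl_{n}\right)$ one has, by definition of the adjoint and the preceding proposition,
\begin{equation*}
\left\langle \pi_{\alpha,S^{n-1}}f,\pi_{\alpha,S^{n-1}}f\right\rangle_{\Omega}=\left\langle f,\pi_{\alpha,S^{n-1}}^{\ast}\pi_{\alpha,S^{n-1}}f\right\rangle_{\Omega}=\left\langle f,f\right\rangle_{\Omega}.
\end{equation*}
Taking scalar parts (the zero-component of the Clifford number) of both sides and then square roots yields $\parallel\pi_{\alpha,S^{n-1}}f\parallel=\parallel f\parallel$, which is the claim.

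The key steps, in order, are: (i) invoke the relation $\pi_{\alpha,S^{n-1}}^{\ast}\pi_{\alpha,S^{n-1}}=I_{S^{n-1}}$ on $W_{0}^{2,k}\left(\Omega,Cl_{n}\right)$, noting that for $k=0$ this is the whole of $L^{2}\left(\Omega,Cl_{n}\right)$ since the trace condition is vacuous at regularity level zero; (ii) use the adjoint-shifting identity $\left\langle \pi_{\alpha,S^{n-1}}f,g\right\rangle_{\Omega}=\left\langle f,\pi_{\alpha,S^{n-1}}^{\ast}g\right\rangle_{\Omega}$ with $g=\pi_{\alpha,S^{n-1}}f$; (iii) substitute to collapse the right-hand side to $\left\langle f,f\right\rangle_{\Omega}=\parallel f\parallel_{L^{2}\left(\Omega,Cl_{n}\right)}^{2}$; (iv) conclude by extracting the norm. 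Since the norm $\parallel\bullet\parallel_{L^{2}\left(\Omega,Cl_{n}\right)}$ is defined from this inner product by $\parallel h\parallel^{2}=\left[\left\langle h,h\right\rangle_{\Omega}\right]_{0}$, the passage from the inner-product identity to the norm identity is just reading off the scalar part.

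The main obstacle, such as it is, is a bookkeeping point rather than a genuine difficulty: one must be careful that the identity $\pi_{\alpha,S^{n-1}}^{\ast}\pi_{\alpha,S^{n-1}}=I$ was proved on $W_{0}^{2,k}$, i.e. on functions with vanishing trace, whereas the present statement is asserted on all of $W^{2,0}$. This is harmless because $W_{0}^{2,0}=W^{2,0}=L^{2}$ (there is no trace to vanish at regularity zero), so no density or extension argument is actually needed; alternatively one can cite the boundedness of $\pi_{\alpha,S^{n-1}}$ on $L^{2}$ together with density of $C_{0}^{\infty}$ to extend the isometry from a dense subspace, exactly as the paper does for the commutation relation. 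I would state the one-line computation and remark that taking $f=g$ in the displayed polarization identity preceding the proposition gives the result directly.
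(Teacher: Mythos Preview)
Your proposal is correct and follows essentially the same route as the paper: use the adjoint relation to shift $\pi_{\alpha,S^{n-1}}$ across the inner product, invoke $\pi_{\alpha,S^{n-1}}^{\ast}\pi_{\alpha,S^{n-1}}=I$, and extract the scalar part to obtain the norm identity. The paper even sets up the polarization identity $\langle \pi f,\pi g\rangle_{\Omega}=\langle f,g\rangle_{\Omega}$ immediately before the proposition and then specializes to $f=g$, exactly as you describe.
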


\begin{proof}
For $f\in W^{2,0}\left( \Omega ,Cl_{n}\right) $,%
\begin{equation*}
\parallel \pi _{\alpha ,S^{n-1}}f\parallel =\left[ \left\langle \pi _{\alpha
,S^{n-1}}f,\pi _{\alpha ,S^{n-1}}f\right\rangle _{\Omega }\right] _{0}
\end{equation*}

\begin{eqnarray*}
&=&\left[ \left\langle f,\pi _{\alpha ,S^{n-1}}^{\ast }\pi _{\alpha
,S^{n-1}}f\right\rangle _{\Omega }\right] _{0} \\
&=&\left[ \left\langle f,f\right\rangle _{\Omega }\right] _{0}=\parallel
f\parallel
\end{eqnarray*}
\end{proof}

In the following proposition we identify functions in the Hilbert space
which are fixed by the spherical $\pi _{\alpha ,S^{n-1}}$ operator?

\begin{proposition}
(Fixed Points of $\pi _{\alpha ,S^{n-1}}$) Let $f\in L^{2}\left( \Omega
,Cl_{n}\right) $. If 
\begin{equation*}
\pi _{\alpha ,S^{n-1}}f=f
\end{equation*}%
then%
\begin{equation*}
f=\left( \dsum\limits_{0<j}e_{0}e_{j}\left( \omega _{0}\frac{\partial }{%
\partial \omega _{j}}-\omega _{j}\frac{\partial }{\partial \omega _{0}}%
\right) +\alpha \right) T_{\Omega }f.
\end{equation*}
\end{proposition}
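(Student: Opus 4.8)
The plan is to run the same argument as in the earlier Proposition phrased in terms of the operators $\Lambda_{0,j}$, of which the present statement is merely the coordinate-explicit reformulation, since $\Lambda_{0,j} = e_0 e_j\bigl(\omega_0 \tfrac{\partial}{\partial\omega_j} - \omega_j \tfrac{\partial}{\partial\omega_0}\bigr)$. First I would unfold the definition of the operator: by the definition of $\pi_{\alpha,S^{n-1}}$, the hypothesis $\pi_{\alpha,S^{n-1}}f = f$ reads $\overline{\Gamma}_\alpha T_\Omega f = f$. Before manipulating this I should record two facts already available in the preliminaries: $T_\Omega$ carries $L^2(\Omega,Cl_n) = W^{2,0}(\Omega,Cl_n)$ into $W^{2,1}(\Omega,Cl_n)$, so $T_\Omega f$ is regular enough that both $\Gamma_\alpha$ and its Clifford conjugate $\overline{\Gamma}_\alpha$ may legitimately be applied to it; and the right-inverse identity $\Gamma_\alpha T_\Omega f = f$, proved for $BC^1$ functions with bounded derivative, extends to all $f \in L^2$ by the density and continuity arguments already invoked.

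The core of the argument is the algebraic identity
\[
\overline{\Gamma}_\alpha = 2\Bigl(\sum_{0<j} e_0 e_j\bigl(\omega_0 \tfrac{\partial}{\partial\omega_j} - \omega_j \tfrac{\partial}{\partial\omega_0}\bigr) + \alpha\Bigr) - \Gamma_\alpha ,
\]
which I would derive from $\Gamma_\omega = -\sum_{i<j} e_{ij}\bigl(\omega_i \tfrac{\partial}{\partial\omega_j} - \omega_j \tfrac{\partial}{\partial\omega_i}\bigr)$ by taking Clifford conjugates term by term, using $\overline{e}_{ij} = \overline{e}_j\,\overline{e}_i = e_j e_i = -e_{ij}$, and then accounting for the scalar summand $\alpha$; concretely, the identity asserts that $\overline{\Gamma}_\alpha + \Gamma_\alpha$ equals twice the indicated first-order operator. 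Substituting it into $\overline{\Gamma}_\alpha T_\Omega f = f$ yields
\[
f = 2\Bigl(\sum_{0<j} e_0 e_j\bigl(\omega_0 \tfrac{\partial}{\partial\omega_j} - \omega_j \tfrac{\partial}{\partial\omega_0}\bigr) + \alpha\Bigr)T_\Omega f - \Gamma_\alpha T_\Omega f ,
\]
and replacing $\Gamma_\alpha T_\Omega f$ by $f$ in the last term collapses the right-hand side to $2\bigl(\sum_{0<j}\Lambda_{0,j} + \alpha\bigr)T_\Omega f - f$. Transposing and dividing by $2$ gives exactly $f = \bigl(\sum_{0<j} e_0 e_j(\omega_0 \partial_{\omega_j} - \omega_j \partial_{\omega_0}) + \alpha\bigr)T_\Omega f$, which is the assertion.

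I expect the one genuine point requiring care to be the conjugation identity itself: matching the index ranges and signs of $\Gamma_\omega = -\omega\wedge D_\omega$ against the $\Lambda_{0,j}$ bookkeeping — in particular reconciling the ``$0$'' index appearing there with the coordinates in which $S^{n-1}$ and $D_\omega$ are written — and deciding whether the complex scalar $\alpha$ is conjugated or carried through unchanged, so that the coefficient ``$2$'' and the subtracted copy of $\Gamma_\alpha$ come out correctly. Once that identity is pinned down, everything else is a two-line substitution and cancellation using only the mapping property $T_\Omega : W^{2,0} \to W^{2,1}$ and the right-inverse relation $\Gamma_\alpha T_\Omega = I$; the hypersingularity of the kernel $\Psi_\alpha^n$ plays no role at this level of the argument.
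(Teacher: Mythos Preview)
Your proposal is correct and follows essentially the same route as the paper: both arguments hinge on the conjugation identity expressing $\overline{\Gamma}_\alpha$ as twice the $\Lambda_{0,j}$-operator minus $\Gamma_\alpha$, followed by the substitution $\Gamma_\alpha T_\Omega f = f$ and a one-line cancellation. Your write-up is in fact more careful than the paper's, which merely says ``simplifying this and equating the result to $f$'' without showing the cancellation, and which carries a stray minus sign in its displayed version of the conjugation identity (it writes $-2(\cdots)-\Gamma_\alpha$ rather than the $+2(\cdots)-\Gamma_\alpha$ used in the earlier $\Lambda_{0,j}$ proposition and in your argument); your caution about the index-$0$ bookkeeping is well placed, since the paper leaves that point unresolved.
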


\begin{proof}
First%
\begin{eqnarray*}
\Gamma _{\alpha } &=&\Gamma _{\omega }+\alpha \\
&=&-\dsum\limits_{i<j}e_{ij}\left( \omega _{i}\frac{\partial }{\partial
\omega _{j}}-\omega _{j}\frac{\partial }{\partial \omega _{i}}\right) +\alpha
\end{eqnarray*}
Then

\begin{equation*}
\overline{\Gamma }_{\alpha }=-2\left( \dsum\limits_{0<j}e_{0}e_{j}\left(
\omega _{0}\frac{\partial }{\partial \omega _{j}}-\omega _{j}\frac{\partial 
}{\partial \omega 0}\right) +\alpha \right) -\Gamma _{\alpha }.
\end{equation*}
This gives us

\begin{equation*}
\pi _{\alpha ,S^{n-1}}f=\left( -2\left( \dsum\limits_{0<j}e_{0}e_{j}\left(
\omega _{0}\frac{\partial }{\partial \omega _{j}}-\omega _{j}\frac{\partial 
}{\partial \omega 0}\right) +\alpha \right) -\Gamma _{\alpha }\right)
T_{\Omega }f
\end{equation*}

simplifying this and equating the result to $f$, we get the result.
\end{proof}

\ \ \ \ \ 

\begin{remark}
The $\pi _{\alpha ,S^{n-1}}$ operator is a bounded, and isometric operator
which preserves regularity over Sobolev spaces and yet different from the
Identity operator .
\end{remark}

\begin{remark}
For a global function $g\in L^{2}\left( S^{n-1},Cl_{n}\right) $, or in $%
W_{0}^{2,k}\left( \Omega ,Cl_{n}\right) $, the equation 
\begin{equation*}
\pi _{\alpha ,S^{n-1}}f=g
\end{equation*}%
has a solution in the respective space given by 
\begin{equation*}
f=\pi _{\alpha ,S^{n-1}}^{\ast }g
\end{equation*}%
where $\pi _{\alpha ,S^{n-1}}^{\ast }$ is the adjoint of the $\pi _{\alpha
,S^{n-1}}$ operator.
\end{remark}

\section{\textbf{The Spherical Clifford Beltrami Equation}}

For measurable functions $f,q:\Omega \subseteq 
\mathbb{C}
$ with $\parallel q\parallel <1$, the classical Beltrami equation given by%
\begin{equation*}
f_{z}-qf_{\overline{z}}=0
\end{equation*}%
has been studied by many authors. The equation has also its version in
higher dimensions in the real Clifford algebra $Cl_{n}\left( 
\mathbb{R}
\right) $ or in the complexified Clifford algebra $Cl_{n}\left( 
\mathbb{C}
\right) $ or over domain manifolds in $%
\mathbb{C}
^{n}$.

\ \ 

In \cite{lakryan}, the authors study the Beltrami equation over $%
\mathbb{C}
^{n+1}$ via real, compact, $(n+1)-$manifolds in $%
\mathbb{C}
^{n+1}$. This is possible by introducing an intrinsic Dirac operator
specific to each domain manifold.

\ \ 

In this paper we extend our study of the $\pi $-operator over spherical
domains and once again consider the Beltrami equation here.

\begin{definition}
Let $\Omega $ be a smooth domain in $S^{n-1}$ and let $q:\Omega \rightarrow
Cl_{n}$ be a measurable function.

Then for $f\in W^{2,1}\left( \Omega ,Cl_{n}\right) $, the spherical Clifford
Beltrami equation is given by

\begin{equation}
\Gamma _{\alpha }f-q\overline{\Gamma }_{\alpha }f=0  \label{be1}
\end{equation}
\end{definition}

In order to study this Beltrami equation, lets consider an integral equation
given by

\begin{equation*}
f=T_{\Omega }h+\phi
\end{equation*}%
where $\phi $ is in the $\ker \Gamma _{\alpha }\left( \Omega \right) $ and $%
h=\Gamma _{\alpha }f$ .Then applying $\overline{\Gamma }_{\alpha }$ on both
sides of the integral equation we get%
\begin{equation*}
\overline{\Gamma }_{\alpha }f=\overline{\Gamma }_{\alpha }T_{\Omega }h+%
\overline{\Gamma }_{\alpha }\phi =\pi _{\alpha ,S^{n-1}}h+\widetilde{\phi }
\end{equation*}%
with $\widetilde{\phi }=\overline{\Gamma }_{\alpha }\phi $ and solving for $%
h,$ we get

\begin{equation}
h=\Gamma _{\alpha }f=q\left( \pi _{\alpha ,S^{n-1}}h+\widetilde{\phi }\right)
\label{be2}
\end{equation}

We now consider the two equations $\left( \text{\ref{be1}}\right) $ and $%
\left( \text{\ref{be2}}\right) .$The solvability of one is the solvability
of the other.

To study the solvability of $\left( \text{\ref{be2}}\right) $, we consider
the mapping : 
\begin{equation*}
h\mapsto q\pi _{\alpha ,S^{n-1}}h\text{, \ with}\parallel q\parallel <1
\end{equation*}%
\ 

\ which is a contraction map and hence it has a fixed point which is going
to be a solution to $\left( \text{\ref{be2}}\right) $. Therefore the
Beltrami equation has a solution.

\end{document}